\theoremstyle{plain}
\newtheorem{lem}{Lemma}[section]
\newtheorem{cor}[lem]{Corollary}
\newtheorem{prop}[lem]{Proposition}
\newtheorem{thm}[lem]{Theorem}
\theoremstyle{definition}
\newtheorem{defn}[lem]{Definition}
\newtheorem{ex}[lem]{Example}
\newtheorem{disc}[lem]{Remark}
\newtheorem{fact}[lem]{Fact}
\newcommand{\cat}[1]{\mathcal{#1}}
\newcommand{\catd}{\cat{D}}
\newcommand{\pd}{\operatorname{pd}}
\newcommand{\id}{\operatorname{id}}	
\newcommand{\fd}{\operatorname{fd}}
\newcommand{\mspec}{\operatorname{m-Spec}}
\newcommand{\soc}{\operatorname{Soc}}
\newcommand{\HH}{\operatorname{H}}
\newcommand{\Hom}{\operatorname{Hom}}	
\newcommand{\spec}{\operatorname{Spec}}
\newcommand{\im}{\operatorname{Im}}
\newcommand{\shift}{\mathsf{\Sigma}}
\newcommand{\cone}{\operatorname{Cone}}
\newcommand{\Ker}{\operatorname{Ker}}
\newcommand{\ideal}[1]{\mathfrak{#1}}
\newcommand{\m}{\ideal{m}}
\newcommand{\n}{\ideal{n}}
\newcommand{\p}{\ideal{p}}
\newcommand{\fm}{\ideal{m}}
\newcommand{\fa}{\ideal{a}}
\newcommand{\fb}{\ideal{b}}
\newcommand{\comp}[1]{\widehat{#1}}
\newcommand{\supp}{\operatorname{supp}}
\newcommand{\Supp}{\operatorname{Supp}}
\newcommand{\VE}{\operatorname{V}}
\newcommand{\cosupp}{\operatorname{co-supp}}
\newcommand{\bbz}{\mathbb{Z}}
\newcommand{\xra}{\xrightarrow}
\newcommand{\xla}{\xleftarrow}
\newcommand{\x}{\underline{x}}
\renewcommand{\geq}{\geqslant}
\renewcommand{\leq}{\leqslant}
\newcommand{\Ext}[4][R]{\operatorname{Ext}_{#1}^{#2}(#3,#4)}	
\newcommand{\Rhom}[3][R]{\mathbf{R}\!\operatorname{Hom}_{#1}(#2,#3)}	
\newcommand{\Lotimes}[3][R]{#2\otimes^{\mathbf{L}}_{#1}#3}
\newcommand{\Otimes}[3][R]{#2\otimes_{#1}#3}
\renewcommand{\Hom}[3][R]{\operatorname{Hom}_{#1}(#2,#3)}	
\newcommand{\Tor}[4][R]{\operatorname{Tor}^{#1}_{#2}(#3,#4)}
\newcommand{\LL}[2]{\mathbf{L}\Lambda^{\ideal{#1}}(#2)}
\newcommand{\LLL}[2]{\mathbf{L}\widehat\Lambda^{\ideal{#1}}(#2)}
\newcommand{\RG}[2]{\mathbf{R}\Gamma_{\ideal{#1}}(#2)}
\newcommand{\RRG}[2]{\mathbf{R}\widehat\Gamma_{\ideal{#1}}(#2)}
\newcommand{\Comp}[2]{\widehat{#1}^{\ideal{#2}}}
\newcommand{\ssm}{\smallsetminus}
\newcommand{\catdfb}{\catd_{\text{b}}^{\text{f}}}
\newcommand{\catdb}{\catd_{\text{b}}}
\newcommand{\catdf}{\catd^{\text{f}}}
\newcommand{\RGa}[2]{\mathbf{R}\Gamma_{\mathfrak{#1}\Comp R{#1}}(#2)}
\newcommand{\LLa}[2]{\mathbf{L}\Lambda^{\mathfrak{#1}\Comp R{#1}}(#2)}
\newcommand{\LLno}[1]{\mathbf{L}\Lambda^{\ideal{#1}}}
\newcommand{\LLLno}[1]{\mathbf{L}\widehat\Lambda^{\ideal{#1}}}
\newcommand{\RGno}[1]{\mathbf{R}\Gamma_{\ideal{#1}}}
\newcommand{\RRGno}[1]{\mathbf{R}\widehat\Gamma_{\ideal{#1}}}
\newcommand{\RGano}[1]{\mathbf{R}\Gamma_{\mathfrak{#1}\Comp R{#1}}}
\newcommand{\LLano}[1]{\mathbf{L}\Lambda^{\ideal{#1}\Comp R{#1}}}
\newcommand{\fromRG}[2]{\varepsilon_{\ideal{#1}}^{#2}}
\newcommand{\fromRGno}[1]{\varepsilon_{\ideal{#1}}}
\newcommand{\toLL}[2]{\vartheta^{\ideal{#1}}_{#2}}
\newcommand{\toLLno}[1]{\vartheta^{\ideal{#1}}}
\newcommand{\catdator}{\catd(R)_{\text{$\fa$-tor}}}
\newcommand{\catdacomp}{\catd(R)_{\text{$\fa$-comp}}}
\newcommand{\catdaator}{\catd(\Comp Ra)_{\text{$\fa\Comp Ra$-tor}}}
\newcommand{\catdaacomp}{\catd(\Comp Ra)_{\text{$\fa\Comp Ra$-comp}}}
\newcommand{\compsa}{\comp S^{\fa S}}
\numberwithin{equation}{lem}
\begin{document}

\bibliographystyle{amsplain}

\author{Sean Sather-Wagstaff}

\address{Department of Mathematical Sciences,
Clemson University,
O-110 Martin Hall, Box 340975, Clemson, S.C. 29634
USA}

\email{ssather@clemson.edu}

\urladdr{https://ssather.people.clemson.edu/}

\thanks{
Sean Sather-Wagstaff was supported in part by a grant from the NSA}

\author{Richard Wicklein}

\address{Richard Wicklein, Mathematics and Physics Department, MacMurray College, 447 East College Ave., Jacksonville, IL 62650, USA}

\email{richard.wicklein@mac.edu}

\title{Extended Local Cohomology and Local Homology}

%\date{\today}

%\dedicatory{}

\keywords{
Adic finiteness; 
cohomologically cofinite complexes;
derived local cohomology;
derived local homology;
Greenlees-May duality;
MGM equivalence;
support}
\subjclass[2010]{
13B35, % Completion,
13C12, % Torsion modules and ideals,
13D09, % Derived categories,
13D45% % Local cohomology
}

\begin{abstract}
We present an in-depth exploration of the module structures of local (co)homology modules
(moreover, for complexes) over the completion
$\Comp Ra$ of a commutative noetherian ring $R$ with respect to a proper ideal $\fa$. 
In particular, we extend Greenlees-May Duality and MGM Equivalence to track behavior over $\Comp Ra$, 
not just over $R$. 
We apply this to the study of two recent versions of homological finiteness for complexes, 
and to certain isomorphisms, with a view toward further applications.
We also discuss subtleties and simplifications in the computations of these functors. 
%We present an in-depth exploration of the module structures of local (co)homology modules (moreover, for complexes) over the completion $\widehat R^{\mathfrak a}$ of a commutative noetherian ring $R$ with respect to a proper ideal $\mathfrak a$. In particular, we extend Greenlees-May Duality and MGM Equivalence to track behavior over $\widehat R^{\mathfrak a}$, not just over $R$. We apply this to the study of two recent versions of homological finiteness for complexes, and to certain isomorphisms, with a view toward further applications. We also discuss subtleties and simplifications in the computations of these functors. 
% notes: 23 pages. part 4 of a series with http://arxiv.org/abs/1401.6925 and http://arxiv.org/abs/1506.07052. comments welcome
% add other arxiv refs after posting
\end{abstract}

\maketitle

\tableofcontents

\section{Introduction} \label{sec130805a}
Throughout this paper let $R$ be a commutative noetherian ring, let $\fa \subsetneq R$ be a proper ideal of $R$, and let $\Comp{R}{a}$ be the $\fa$-adic completion of $R$.
Let $K$ denote the Koszul complex over $R$ on a finite generating sequence for $\fa$.
We work in the derived category $\catd(R)$ with objects  the $R$-complexes
indexed homologically
$X=\cdots\to X_i\to X_{i-1}\to\cdots$
and the full subcategory $\catdb(R)$ of complexes with bounded homology.
Isomorphisms in $\catd(R)$ are marked by the symbol $\simeq$.
The right derived functor of Hom is $\Rhom --$, and the left derived functor of $\Otimes --$ is $\Lotimes --$.
See, e.g., \cite{hartshorne:rad,verdier:cd,verdier:1} for foundations and Section~\ref{sec140109b} for background.

\

This work is part 4 in a series of papers on derived local cohomology and derived local homology.
It builds on our previous papers~\cite{sather:afbha,sather:afcc,sather:scc}, and it is applied 
in the papers~\cite{sather:afc,sather:asc}.

The starting point for this paper is the following fact. 
Given an $R$-module $M$, each local cohomology module $\HH^i_\fa(M)$ is $\fa$-torsion,
so it has a natural $\Comp Ra$-module structure. 
The completion $\Comp Ma$ also has a natural $\Comp Ra$-module structure. 
More generally, given an $R$-complex $X$, the derived local cohomology complex $\RG aX$
and the derived local homology complex $\LL aX$ are naturally complexes over $\Comp Ra$.
These complexes are constructed by applying the torsion and completion functors, respectively,
to appropriate resolutions of $X$.
For clarity, we write $\RRG aX$ and $\LLL aX$ when we are working over $\Comp Ra$.
See Section~\ref{sec140109b} for  definitions and notation. 
Note that  Section~\ref{sec150922a} documents some subtleties and a simplification involved in these constructions.

In this paper, we investigate how 
standard facts for the $R$-complexes $\RG aX$ and $\LL aX$ extend to
the $\Comp Ra$-complexes $\RRG aX$ and $\LLL aX$.
Our primary motivation comes from work of 
Alonso Tarr{\'{\i}}o, Jerem{\'{\i}}as L{\'o}pez, and Lipman~\cite{lipman:lhcs};
Greenlees and May~\cite{greenlees:dfclh};
Matlis~\cite{matlis:kcd,matlis:hps}; and
Porta, Shaul, and Yekutieli~\cite{yekutieli:hct}.
For instance, the main results of Section~\ref{sec150626a}, summarized next,
extend Greenlees-May Duality and MGM Equivalence (named for Matlis, Greenlees, and May) to this setting.
See Theorems~\ref{thm151002a}, \ref{thm151003a}, and~\ref{thm151003b} in the body of the paper.

\begin{thm}\label{thm151129a}
Let $X,Y\in\catd(R)$ be given.
\begin{enumerate}[\rm(a)]
\item\label{thm151129a1}
there are natural isomorphisms in $\catd(\Comp Ra)$:
\begin{align*}
\Rhom[\Comp Ra]{\LLL aX}{\LLL aY}
&\simeq
\Rhom[\Comp Ra]{\RRG aX}{\RRG aY}\\
&\simeq\Rhom[\Comp Ra]{\RRG aX}{\LLL aY}.
\end{align*}
\item\label{thm151129a2}
The functor
$\RRGno a\colon\catdator\to\catdaator$ is a quasi-equivalence with quasi-inverse given by  
the forgetful functor
$Q\colon \catdaator\to\catdator$.
\item\label{thm151129a3}
The functor
$\LLLno a\colon\catdacomp\to\catdaacomp$  is a quasi-equivalence with quasi-inverse given by 
the forgetful functor
$Q\colon \catdaacomp\to\catdacomp$.
\end{enumerate}
\end{thm}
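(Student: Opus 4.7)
The plan is to bootstrap from the classical $R$-linear Greenlees-May Duality and MGM Equivalence, of which Theorem~\ref{thm151129a} is a ``tracked'' variant recording the $\Comp Ra$-structure on the relevant objects. The underlying principle is that every $\fa$-torsion or $\fa$-complete $R$-complex carries a canonical $\Comp Ra$-structure extending its $R$-structure, and $R$-linear maps between such complexes are automatically $\Comp Ra$-linear.

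For part~\eqref{thm151129a1} I would first record the classical $R$-linear identifications
\[
\Rhom{\LL aX}{\LL aY}\simeq\Rhom{\RG aX}{\LL aY}\simeq\Rhom{\RG aX}{\RG aY},
\]
which follow from Greenlees-May duality $\Rhom{\RG aX}{Y}\simeq\Rhom{X}{\LL aY}$ combined with MGM (the equivalence of derived $\fa$-torsion and derived $\fa$-complete complexes via $\RG a$ and $\LL a$). Each of these isomorphisms is then promoted to an isomorphism in $\catd(\Comp Ra)$ by checking that the two sides carry the same canonical $\Comp Ra$-structure: restriction of scalars $Q$ along $R\to\Comp Ra$ sits in an adjunction, and since every term in the display is either $\fa$-torsion or $\fa$-complete, the natural comparison map $\Rhom[\Comp Ra]{-}{-}\to Q\Rhom{Q-}{Q-}$ is a quasi-isomorphism for each relevant pair of arguments.

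For parts~\eqref{thm151129a2} and~\eqref{thm151129a3} I would verify both compositions of $\RRGno a$ (resp.\ $\LLLno a$) with $Q$ separately. Given $X\in\catdator$, the composition $Q\circ\RRGno a(X)$ has underlying $R$-complex $\RG aX\simeq X$, which handles $Q\circ\RRGno a\simeq\mathrm{id}$. Conversely, for $\wti X\in\catdaator$, the $R$-complex $Q\wti X$ is $\fa$-torsion, so $\RRG a(Q\wti X)$ is underneath isomorphic to $Q\wti X$ in $\catd(R)$; the $\Comp Ra$-structure coming from $\RRG a$ coincides with the original structure on $\wti X$ because both extend the same $R$-structure and the extension is unique on $\fa$-torsion complexes. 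Part~\eqref{thm151129a3} is the formal dual, using $\LL aY\simeq Y$ for $\fa$-complete $Y$ and the analogous uniqueness of $\Comp Ra$-structures on $\fa$-complete complexes.

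The main obstacle lies in the chain-level verifications underlying these formal arguments. In particular, $\RRG aX$ and $\LLL aX$ are defined via specific resolutions, so establishing that the ``upgrade'' maps are quasi-isomorphisms in $\catd(\Comp Ra)$ (not merely in $\catd(R)$) requires an explicit comparison of $\Comp Ra$-actions at the level of representatives. Reconciling the two a priori different $\Comp Ra$-structures that appear in the proof of~\eqref{thm151129a2}---the structure inherent to $\wti X$ versus the one produced by $\RRG a$ applied to $Q\wti X$---is the key technical point, and the clarifications in Section~\ref{sec150922a} concerning the constructions of $\RRG a$ and $\LLL a$ will be essential here.
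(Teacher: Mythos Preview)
Your outline for parts~\eqref{thm151129a2}--\eqref{thm151129a3} is close to the paper's, but the ``uniqueness of $\Comp Ra$-structure'' framing hides the actual mechanism and leaves the hard direction unjustified. For $\RRGno a\circ Q\simeq\id$ you assert that the $\Comp Ra$-structure produced by $\RRGno a$ on $Q\wti X$ must agree with the original one on $\wti X$ ``because both extend the same $R$-structure and the extension is unique on $\fa$-torsion complexes.'' Uniqueness of the $\Comp Ra$-action is clear for $\fa$-torsion \emph{modules}, but at the level of $\catd(\Comp Ra)$ you need an actual morphism comparing the two objects, not just a bijection of possible actions on cohomology; your proposal does not construct one, and Section~\ref{sec150922a} (which is about computing $\RGno a$ and $\LLno a$ from non-semi resolutions) does not supply it. The paper's device is more direct: since $\Comp Ra$ is flat over $R$, a semi-injective resolution $\wti X\xra\simeq I$ over $\Comp Ra$ is already semi-injective over $R$, so $\RRG a{Q(\wti X)}\simeq\Gamma_\fa(I)=\Gamma_{\fa\Comp Ra}(I)\simeq I\simeq\wti X$ in $\catd(\Comp Ra)$ with no uniqueness argument needed. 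The same trick with semi-flat resolutions handles~\eqref{thm151129a3}.

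For part~\eqref{thm151129a1} your strategy is genuinely different from the paper's and has a gap. You propose to establish the $R$-linear Greenlees-May isomorphisms and then promote them by showing that the forgetful comparison $\Rhom[\Comp Ra]{-}{-}\to\Rhom{Q-}{Q-}$ is a quasi-isomorphism on the relevant pairs. You do not prove this comparison is an isomorphism, and it is not a formal consequence of any adjunction; in particular it fails badly for arguments that are neither torsion nor complete, and even in the torsion/complete case it requires an argument you have not given. The paper avoids this lifting problem entirely: it first proves (via Lemmas~\ref{lem150805a}--\ref{lem150805d} and their duals) that $\LLL aX\in\catdaacomp$ and $\RRG aX\in\catdaator$, and that $\RGano a\circ\LLLno a\simeq\RRGno a\circ\LLno a$, etc. With these in hand one applies Greenlees-May duality \emph{over $\Comp Ra$}, not over $R$, so every isomorphism in the chain already lives in $\catd(\Comp Ra)$ and there is nothing to lift.
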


Here $\catdator$ and $\catdacomp$ are the full subcategories of $\catd(R)$ consisting of the complexes $X$ and $Y$, respectively,
such that the natural morphisms
$\RG aX\to X$ and $Y\to\LL aY$ are isomorphisms.

Section~\ref{sec151003a} investigates the flat and injective dimensions  of the complexes 
$\RRG aX$ and $\LLL aX$ over $\Comp Ra$. In most cases, we bound these above by flat and injective dimensions of $X$ over $R$.

In Section~\ref{sec151002a}, we use these constructions to explain the connection between the
``cohomologically $\fa$-adically cofinite'' complexes of Porta, Shaul, and Yekutieli~\cite{yekutieli:ccc}
and our ``$\fa$-adically finite'' complexes from~\cite{sather:scc}. 
The first of these notions is only defined when $R$ is $\fa$-adically complete;
in this setting, we show that our notion is equivalent; see Proposition~\ref{prop150626a}.
In general, Theorem~\ref{thm150626a} shows that the category of $\fa$-adically finite complexes over $R$
is quasi-equivalent to the category of homologically finite complexes over $\Comp Ra$, hence to the 
category of cohomologically $\fa \Comp Ra$-adically cofinite complexes over $\Comp Ra$.

The concluding Section~\ref{sec151104b} exhibits some isomorphisms for use in~\cite{sather:afc,sather:asc}. 
For instance, the following result is Theorem~\ref{thm151011a} from the body of the paper. 

\begin{thm}\label{thm151129c}
Let $R\to S$ be a  homomorphism of commutative noetherian rings, and let $X\in\catdb(R)$ be  
$\fa$-adically finite over $R$.
If $\Lotimes SX\in\catdb(S)$, e.g., if $\fd_R(S)<\infty$, then there is an isomorphism
in $\catd(\compsa)$:
$$\Lotimes[\Comp Ra]{\compsa}{\LLL aX}\simeq\mathbf{L}\widehat\Lambda^{\fa S}(\Lotimes SX).$$
\end{thm}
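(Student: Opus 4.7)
The plan is to use extended Greenlees-May Duality and MGM Equivalence (Theorem~\ref{thm151129a}, applied with $(S,\fa S)$ in place of $(R,\fa)$) to reduce the assertion to a corresponding one about derived local torsion, where base change is transparent via the Čech complex $\cech K$ on a generating sequence for $\fa$. Note that $\Otimes{\cech K}{S}$ computes $\mathbf{R}\Gamma_{\fa S}$ on $S$-complexes and $\Otimes{\cech K}{\compsa}$ computes $\mathbf{R}\widehat\Gamma_{\fa S}$ on $\compsa$-complexes.

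The desired isomorphism lives in $\catd(\compsa)$. The right-hand side $\mathbf{L}\widehat\Lambda^{\fa S}(\Lotimes SX)$ is $\fa\compsa$-complete by construction. Granting (see below) that the left-hand side is likewise $\fa\compsa$-complete, the analog of Theorem~\ref{thm151129a}(\ref{thm151129a3}) reduces the problem to showing that the two sides become isomorphic after applying $\mathbf{R}\widehat\Gamma_{\fa S}$. On the right-hand side, extended Greenlees-May gives $\mathbf{R}\widehat\Gamma_{\fa S}\mathbf{L}\widehat\Lambda^{\fa S}(\Lotimes SX)\simeq\mathbf{R}\widehat\Gamma_{\fa S}(\Lotimes SX)$, and a Čech computation using $\Otimes{\cech K}{S}$ together with tensor associativity identifies this with $\Lotimes{S}{\RG aX}$ in $\catd(\compsa)$.

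On the left-hand side, computing $\mathbf{R}\widehat\Gamma_{\fa S}$ via $\Otimes{\cech K}{\compsa}$ and applying associativity, one obtains
\begin{align*}
\mathbf{R}\widehat\Gamma_{\fa S}\!\left(\Lotimes[\Comp Ra]{\compsa}{\LLL aX}\right)
&\simeq\Lotimes[\Comp Ra]{\compsa}{\bigl(\Lotimes[\Comp Ra]{\Otimes{\cech K}{\Comp Ra}}{\LLL aX}\bigr)}\\
&\simeq\Lotimes[\Comp Ra]{\compsa}{\RRG aX},
\end{align*}
where the final step uses extended Greenlees-May over $\Comp Ra$ (Theorem~\ref{thm151129a}(\ref{thm151129a2})), namely $\RRGno{a}\LLL aX\simeq\RRG aX$. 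Matching $\Lotimes[\Comp Ra]{\compsa}{\RRG aX}$ with $\Lotimes{S}{\RG aX}$ in $\catd(\compsa)$ rests on the $\fa$-torsion of $\RG aX$: both derived tensor products factor through the quotients $R/\fa^n$, and the natural maps $\Otimes{S}{(R/\fa^n)}\to\compsa/\fa^n\compsa$ are isomorphisms.

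The main obstacle will be verifying that $\Lotimes[\Comp Ra]{\compsa}{\LLL aX}$ is genuinely $\fa\compsa$-complete in $\catd(\compsa)$, so that the MGM reduction above applies. Here both hypotheses on $X$ are essential: by Theorem~\ref{thm150626a}, $\fa$-adic finiteness of $X$ translates to homological finiteness of $\LLL aX$ over $\Comp Ra$, while the boundedness $\Lotimes SX\in\catdb(S)$ supplies the amplitude control required for the derived tensor product $\Lotimes[\Comp Ra]{\compsa}{\LLL aX}$ to inherit $\fa\compsa$-completeness from the complete ring $\compsa$.
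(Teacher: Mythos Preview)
Your overall architecture matches the paper's: both reduce the desired isomorphism in $\catd(\compsa)$ to a torsion-side computation, and both compute $\mathbf{R}\Gamma_{\fa\compsa}$ of each side as $\compsa\otimes^{\mathbf L}_R X$ (your ``factor through $R/\fa^n$'' sketch can be made precise exactly as in the paper, via $\RRG aX\simeq\Lotimes{\Comp Ra}{\RG aX}$ and tensor-cancellation). The genuine difference is the equivalence used for the reduction: the paper invokes Theorem~\ref{thm150626a}\eqref{thm150626a2}, which requires both sides to lie in $\catdfb(\compsa)$, and this is why the paper must prove its Claim~2 (boundedness of $\Lotimes[\Comp Ra]{\compsa}{\LLL aX}$, using the Koszul complex and~\cite[Theorem~4.2(b)]{frankild:rrhffd}); you instead invoke ordinary MGM over $\compsa$, which only needs $\fa\compsa$-completeness.

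The gap is in your final paragraph. You flag derived $\fa\compsa$-completeness of $\Lotimes[\Comp Ra]{\compsa}{\LLL aX}$ as the main obstacle but do not prove it, and your heuristic---that the hypothesis $\Lotimes SX\in\catdb(S)$ provides ``amplitude control'' for completeness---is off the mark. In fact completeness is immediate and uses only the $\fa$-adic finiteness of $X$: since $\LLL aX\in\catdfb(\Comp Ra)$, choose a bounded-below resolution $F\xra\simeq\LLL aX$ by finitely generated free $\Comp Ra$-modules; then $G:=\compsa\otimes_{\Comp Ra}F$ is a bounded-below (hence semi-flat) complex of finitely generated free $\compsa$-modules, each already $\fa\compsa$-complete, so $\Lambda^{\fa\compsa}(G)=G$ and the natural map $G\to\mathbf L\Lambda^{\fa\compsa}(G)$ is an isomorphism. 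No boundedness above is required. Once you fill this in, your route actually bypasses the paper's Claim~2 entirely---and, notably, your argument never uses the hypothesis $\Lotimes SX\in\catdb(S)$, so it in fact proves the theorem without it.
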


When $X$ is homologically finite, this is a straightfoward consequence of the 
isomorphisms $\LLL aX\simeq\Lotimes{\Comp Ra}X$ and 
$\mathbf{L}\widehat\Lambda^{\fa S}(\Lotimes SX)\simeq\Lotimes[S]{\compsa}{(\Lotimes SX)}$.
In general, though, this is more subtle.
And while it may seem esoteric, it is key for understanding some base-change properties in~\cite{sather:asc}.

\section{Background}\label{sec140109b} 

\subsection*{Derived Categories}
In addition to the categories mentioned in Section~\ref{sec130805a},
we also consider the following full triangulated subcategories of $\catd(R)$:

\

$\catd_+(R)$: objects are the complexes $X$ with $\HH_i(X)=0$ for $i\ll 0$.

$\catd_-(R)$: objects are the complexes $X$ with $\HH_i(X)=0$ for $i\gg 0$. 

$\catdf(R)$: objects are the complexes $X$ with $\HH_i(X)$ finitely generated for all $i$.

\

\noindent Doubly ornamented subcategories are defined as intersections, e.g., $\catdf_+(R):=\catdf(R)\bigcap\catd_+(R)$.

\subsection*{Resolutions}
An $R$-complex $F$ is 
\emph{semi-flat}\footnote{In the literature, semi-flat complexes are sometimes called ``K-flat'' or ``DG-flat''.} 
if the functor $\Otimes F-$ respects quasiisomorphisms and each module $F_i$ is flat over $R$, that is,
if $\Otimes F-$ respects injective quasiisomorphisms (see~\cite[1.2.F]{avramov:hdouc}).
A \emph{semi-flat resolution} of an $R$-complex $X$ is a quasiisomorphism $F\xra\simeq X$ such that $F$ is semi-flat.
The \emph{flat dimension} of  $X$ 
$$\fd_R(X):=\inf\{\sup\{i\in\bbz\mid F_i\neq 0\}\mid\text{$F\xra\simeq X$ is a semi-flat resolution}\}$$
is the length of the shortest bounded semi-flat resolution of $X$, if one exists.
The   projective and injective versions (semi-projective, etc.) are defined similarly. 

For the following items, consult~\cite[Section 1]{avramov:hdouc} or~\cite[Chapters 3 and 5]{avramov:dgha}.
Bounded below  complexes of projective $R$-modules are semi-projective, 
bounded below  complexes of flat $R$-modules are semi-flat, and
bounded above  complexes of injective $R$-modules are semi-injective.
Semi-projective $R$-complexes are semi-flat, and every $R$-complex admits a semi-projective resolution (hence, a semi-flat one) and a semi-injective resolution.

\subsection*{Support and Co-support}
The following notions are due to Foxby~\cite{foxby:bcfm} and Benson, Iyengar, and Krause~\cite{benson:csc}.

\begin{defn}\label{defn130503a}
Let $X\in\catd(R)$.
The \emph{small support} and  \emph{small co-support} of $X$ are
\begin{align*}
\operatorname{supp}_R(X)
&=\{\mathfrak{p} \in \operatorname{Spec}(R)\mid \Lotimes{\kappa(\p)}X\not\simeq 0 \} \\
\cosupp_{R}(X)
&=\{\mathfrak{p} \in \operatorname{Spec}(R)\mid \Rhom{\kappa(\p)}X\not\simeq 0 \} 
\end{align*}
where $\kappa(\p):=R_\p/\p R_\p$.
\end{defn}

Much of the following is from~\cite{foxby:bcfm} when $X$ and $Y$ are appropriately bounded and from~\cite{benson:lcstc,benson:csc} in general. 
We refer to~\cite{sather:scc} as a matter of convenience.

\begin{fact}\label{cor130528aw}
Let $X,Y\in\catd(R)$. Then we have $\supp_R(X)=\emptyset$ if and only if $X\simeq 0$ if and only if $\cosupp_R(X)=\emptyset$,
because of~\cite[Fact~3.4 and Proposition~4.7(a)]{sather:scc}.
Also, by~\cite[Propositions~3.12 and~4.10]{sather:scc} we have 
\begin{align*}
\supp_{R}(\Lotimes{X}{Y}) 
&= \supp_R(X)\bigcap\supp_R(Y)\\
\cosupp_{R}(\Rhom{X}{Y}) 
&= \supp_R(X)\bigcap\cosupp_R(Y).
\end{align*}
\end{fact}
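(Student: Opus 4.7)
The plan is to address the three separate assertions in turn; all reduce to computations with $\Lotimes{\kappa(\p)}{-}$ and $\Rhom{\kappa(\p)}{-}$ for $\p\in\spec(R)$.

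\emph{First, the equivalence $\supp_R(X)=\emptyset \iff X\simeq 0 \iff \cosupp_R(X)=\emptyset$.} The implications $X\simeq 0\Rightarrow\supp_R(X)=\emptyset$ and $X\simeq 0\Rightarrow\cosupp_R(X)=\emptyset$ are immediate, since $\Lotimes{\kappa(\p)}{X}$ and $\Rhom{\kappa(\p)}{X}$ vanish when $X$ does. For the converses I would argue contrapositively: assume $X\not\simeq 0$, so some $\HH_i(X)\neq 0$; picking $\p$ minimal in $\Supp_R\HH_i(X)$, we have $X_\p\not\simeq 0$ in $\catd(R_\p)$. It then remains to verify a ``derived Nakayama lemma'': if $Y\in\catd(R_\p)$ is nonzero, then $\Lotimes[R_\p]{\kappa(\p)}{Y}\not\simeq 0$ and $\Rhom[R_\p]{\kappa(\p)}{Y}\not\simeq 0$. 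For bounded $Y$ this is classical (successive truncation reduces to ordinary Nakayama on the first nonzero homology); in general one invokes the Benson--Iyengar--Krause machinery, approximating $\kappa(\p)$ by stable Koszul complexes on a generating sequence of $\p R_\p$ and using their torsion/completion duality to detect nonvanishing.

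\emph{Second, $\supp_R(\Lotimes{X}{Y})=\supp_R(X)\cap\supp_R(Y)$.} I would use the base-change isomorphism
\[
\Lotimes{\kappa(\p)}{(\Lotimes{X}{Y})}\simeq \Lotimes[\kappa(\p)]{\bigl(\Lotimes{\kappa(\p)}{X}\bigr)}{\bigl(\Lotimes{\kappa(\p)}{Y}\bigr)},
\]
which follows from the identity $A\otimes_R(F\otimes_R G)\cong(A\otimes_R F)\otimes_A(A\otimes_R G)$ for any commutative $R$-algebra $A$, applied to semi-flat resolutions of $X$ and $Y$. Because $\kappa(\p)$ is a field, the K\"unneth formula expresses the homology of the right-hand side as a direct sum of tensor products of $\kappa(\p)$-vector spaces, so it vanishes iff one of the two factors vanishes; this delivers both containments at once.

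\emph{Third, $\cosupp_R(\Rhom{X}{Y})=\supp_R(X)\cap\cosupp_R(Y)$.} The key is a chain of adjunctions,
\[
\Rhom{\kappa(\p)}{\Rhom{X}{Y}}\simeq\Rhom{\Lotimes{\kappa(\p)}{X}}{Y}\simeq\Rhom[\kappa(\p)]{\Lotimes{\kappa(\p)}{X}}{\Rhom{\kappa(\p)}{Y}},
\]
where the second isomorphism is the standard identity $\Rhom{M}{N}\simeq\Rhom[A]{M}{\Rhom{A}{N}}$ for an $R$-algebra $A$ and an $A$-complex $M$. The result is a $\Rhom[\kappa(\p)]{-}{-}$ between complexes of $\kappa(\p)$-vector spaces, which is nonzero iff both arguments are nonzero. \emph{Main obstacle:} the sole delicate step is the unbounded derived Nakayama portion of the first bullet, since over a local ring the residue field only detects nonzero unbounded objects in $\catd(R)$ via the non-formal Benson--Iyengar--Krause arguments; once that principle is granted, the identities for $\supp$ and $\cosupp$ are formal consequences of standard derived adjunctions together with K\"unneth over $\kappa(\p)$.
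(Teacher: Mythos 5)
The paper states this as a Fact and provides no proof of its own, only citations back to the companion paper \cite{sather:scc}; so there is no in-text argument to compare against. Your sketch is the correct direct argument that those citations encapsulate: the support/cosupport formulas for $\Lotimes XY$ and $\Rhom XY$ follow, exactly as you say, from derived base change to $\kappa(\p)$ followed by the fact that over a field a tensor product (resp.\ Hom complex) of nonzero complexes is nonzero, and the detection statement ($\supp_R(X)=\emptyset\iff X\simeq 0\iff\cosupp_R(X)=\emptyset$) is the Neeman/Benson--Iyengar--Krause theorem. One caution about your first bullet: the ``pick $\p$ minimal in $\Supp_R\HH_i(X)$, localize, apply derived Nakayama'' reduction is really only a proof when $X$ is homologically bounded on one side, since there you can find a first (or last) nonzero homology module and apply ordinary Nakayama to it; for genuinely two-sided unbounded $X$ there is no such extremal homology, and indeed $(X_\p\not\simeq 0)\not\Rightarrow(\Lotimes[R_\p]{\kappa(\p)}{X_\p}\not\simeq 0)$ cannot be established by a naive truncation argument, which is precisely why Neeman's and Benson--Iyengar--Krause's proofs proceed through the stable Koszul complex and the classification of localizing/colocalizing subcategories rather than through a localize-then-Nakayama reduction. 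You flag this as the delicate step, which is appropriate, but it should be understood that the localization-plus-Nakayama picture is a heuristic for the bounded case rather than an outline of the actual unbounded argument. The Künneth and adjunction steps in your second and third bullets are sound for unbounded complexes: over the field $\kappa(\p)$ every complex splits as a shifted direct sum of its homology, so both $\Lotimes[\kappa(\p)]{-}{-}$ and $\Rhom[\kappa(\p)]{-}{-}$ of nonzero complexes are nonzero.
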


\subsection*{Derived Local (Co)homology}
The next notions go back to Grothendieck~\cite{hartshorne:lc}, and Matlis~\cite{matlis:kcd,matlis:hps}, respectively;
see also~\cite{lipman:lhcs,lipman:llcd}.
Let $\Lambda^{\fa}$ denote the $\fa$-adic completion functor, and
$\Gamma_{\fa}$ is the $\fa$-torsion functor, i.e.,
for an $R$-module $M$ we have
$$\Lambda^{\fa}(M)=\Comp Ma
\qquad
\qquad
\qquad
\Gamma_{\fa}(M)=\{ x \in M \mid \fa^{n}x=0 \text{ for } n \gg 0\}.$$ 
A module $M$ is \textit{$\fa$-torsion} if $\Gamma_{\fa}(M)=M$.

The associated left and right derived functors (i.e., \emph{derived local homology and cohomology} functors)
are  $\LL a-$ and $\RG a-$.
Specifically, given an $R$-complex $X\in\catd(R)$ and a semi-flat resolution $F\xra\simeq X$ and a 
semi-injective resolution $X\xra\simeq I$, then we have $\LL aX\simeq\Lambda^{\fa}(F)$ and $\RG aX\simeq\Gamma_{\fa}(I)$.
Note that these definitions yield natural transformations $\RGno a\xra{\fromRGno a}\id\xra{\toLLno a} \LLno a$, induced by the natural morphisms
$\Gamma_{\fa}(I)\xra{\iota_{\fa}^I} I$ and $F\xra{\nu^{\fa}_F} \Lambda^{\fa}(F)$.
Let $\catdator$ denote the full subcategory of $\catd(R)$ of all complexes $X$ such that the morphism
$\RG aX\xra{\fromRG aX}X$ is an isomorphism, and let
$\catdacomp$ denote the full subcategory of $\catd(R)$ of all complexes $Y$ such that the morphism
$Y\xra{\toLL aY}\LL aY$ is an isomorphism.

The definitions of $\RG aX$ and $\LL aX$ yield complexes over the completion $\Comp Ra$, and we denote by
$\LLLno a$ and $\RRGno a$ the associated functors $\catd(R)\to\catd(\Comp Ra)$.
If $Q\colon \catd(\Comp Ra)\to\catd(R)$ is the forgetful functor, then it follows readily that
$Q\circ\LLLno a\simeq\LLno a$ and $Q\circ\RRGno a\simeq\RGno a$.

\begin{fact}\label{fact130619b}
If $X\in\catdf_+(R)$, then there is a natural isomorphism
$\LL aX\simeq \Lotimes{\Comp Ra}{X}$ in $\catd(R)$ by~\cite[Proposition 2.7]{frankild:volh}.
Moreover, the proof of this result shows that there is a natural isomorphism
$\LLL aX\simeq \Lotimes{\Comp Ra}{X}$ in $\catd(\Comp Ra)$.\footnote{This 
is based on the fact that, for a finitely generated free $R$-module $L$, induction on the rank of $L$
shows that the natural isomorphism
$\Otimes{\Comp Ra}{L}\cong\Comp La$ is $\Comp Ra$-linear.} 
By~\cite[Theorem~(0.3) and Corollary~(3.2.5.i)]{lipman:lhcs}, there are natural isomorphisms
of functors
\begin{align*}
\RG a-\simeq\Lotimes{\RG aR}{-}&&
\LL a-\simeq\Rhom{\RG aR}{-}.
\end{align*}
More generally, from~\cite[Theorems~3.2 and~3.6]{shaul:hccac} there are natural isomorphisms of functors $\catd(R)\to\catd(\Comp Ra)$
\begin{gather*}
\RRG a-\simeq\Lotimes{\Comp Ra}{\RG a-}\simeq\Lotimes{\RRG aR}{-}
\\
\LLL a-\simeq\Rhom{\RRG aR}{-}
\simeq\Rhom{\Comp Ra}{\LL a-}.
\end{gather*}
\end{fact}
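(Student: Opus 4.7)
The plan is to deduce each piece of the Fact from the cited literature, with the only genuine work being the verification that the classical $R$-linear isomorphisms can be upgraded from $\catd(R)$ to $\catd(\Comp Ra)$.

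For the first isomorphism, since $X\in\catdf_+(R)$ choose a semi-projective resolution $F\xra\simeq X$ with each $F_i$ finitely generated and free. For such $F_i$ the natural map $\Otimes{\Comp Ra}{F_i}\to\Comp{F_i}{a}=\Lambda^{\fa}(F_i)$ is an isomorphism, and the flatness of $\Comp Ra$ over $R$ produces an $R$-linear chain isomorphism $\Otimes{\Comp Ra}{F}\cong\Lambda^{\fa}(F)$ that represents the desired $\Lotimes{\Comp Ra}{X}\simeq\LL aX$; this is essentially \cite[Proposition~2.7]{frankild:volh}. For the refinement to $\catd(\Comp Ra)$, the footnote's induction on rank reduces matters to the tautological fact that $\Otimes{\Comp Ra}{R^n}\to\Comp{R^n}{a}$ is $\Comp Ra$-linear, and this propagates through the $R$-linear differentials of $F$.

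For the Lipman identifications, I would model $\RG aR$ by the stable Koszul (telescope) complex $\cech{\x}$ on a finite generating sequence $\x$ of $\fa$. Since $\cech{\x}$ is a bounded complex of flat $R$-modules, hence semi-flat, we have $\Lotimes{\RG aR}{X}\simeq\Otimes{\cech{\x}}{X}$, which is a standard semi-flat presentation of $\RG aX$; the dual identity $\LL aX\simeq\Rhom{\RG aR}{X}$ follows from Hom-tensor adjunction combined with Greenlees-May duality, as documented in \cite{lipman:lhcs}. For the refined versions from \cite{shaul:hccac}, observe that $\RG aX$ has $\fa$-torsion homology and is therefore naturally a $\Comp Ra$-complex; combining $\RRG aR\simeq\Lotimes{\Comp Ra}{\RG aR}$ with the Čech model immediately yields $\RRG a-\simeq\Lotimes{\Comp Ra}{\RG a-}\simeq\Lotimes{\RRG aR}{-}$ as functors into $\catd(\Comp Ra)$. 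The $\LLLno a$-analogues follow by applying derived Hom out of $\cech{\x}$ together with the adjunction between extension and restriction of scalars along $R\to\Comp Ra$, producing both $\LLL a-\simeq\Rhom{\RRG aR}{-}$ and $\LLL a-\simeq\Rhom{\Comp Ra}{\LL a-}$.

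The main obstacle is pure bookkeeping of the $\Comp Ra$-module structures: each underlying chain-level identification is a classical isomorphism of $R$-complexes, and one need only verify at the level of the Čech complex, the completion functor, and the extension of scalars $\Otimes{\Comp Ra}{-}$ that the relevant structure maps are $\Comp Ra$-linear. Once that module-level check is made, functoriality and the flatness of $\Comp Ra$ propagate $\Comp Ra$-linearity through the derived functors, matching the arguments of \cite{shaul:hccac}, so no new ideas are required beyond those already assembled in the references.
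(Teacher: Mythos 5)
Your proposal is correct and takes essentially the same approach as the paper: the paper's ``Fact'' is a compilation of citations to \cite{frankild:volh}, \cite{lipman:lhcs}, and \cite{shaul:hccac} with a footnote flagging the one genuine checkpoint (that $\Otimes{\Comp Ra}{L}\cong\Comp La$ is $\Comp Ra$-linear for $L$ finitely generated free, by induction on rank), and your write-up reproduces exactly that structure — finitely generated free resolutions for the first isomorphism, the telescope/\v{C}ech model for $\RG aR$, and propagation of $\Comp Ra$-linearity through the structure maps.
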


Here are Greenlees-May duality and MGM equivalence.

\begin{fact}\label{fact150626a}
Given $X,Y\in\catd(R)$, we have natural  isomorphisms in $\catd(R)$
\begin{align*}
\Rhom{\RG aX}{\RG aY}
&\xra\simeq\Rhom{\RG aX}{Y} \\
&\xra\simeq\Rhom{\RG aX}{\LL aY} \\
&\xla\simeq\Rhom{X}{\LL aY} \\
&\xla\simeq\Rhom{\LL aX}{\LL aY}
\end{align*}
induced by $\fromRGno a$ and $\toLLno a$;
see~\cite[Theorem (0.3)$^*$]{lipman:lhcs}.\footnote{See \label{foot151004a}
also~\cite[Thoerem~6.12]{yekutieli:hct}. In addition, 
we have~\cite[Remark~6.14]{yekutieli:hct} for a discussion of some aspects of this result, and~\cite{yekutieli:ehct} for a correction.}, 
From~\cite[Corollary to Theorem~(0.3)$^*$]{lipman:lhcs}
and~\cite[Theorem~1.2]{yekutieli:hct} the next natural morphisms 
are isomorphisms:
\begin{align*}
\RGno a\circ\id\xra[\simeq]{\RGno a\circ\toLLno a}\RGno a\circ\LLno a
&&\LLno a\circ\RGno a\xra[\simeq]{\LLno a\circ\fromRGno a}\LLno a\circ\id \\
\RGno a\circ\RGno a\xra[\simeq]{\fromRGno a\circ\RGno a}\id\circ\RGno a
&&\id\circ\LLno a\xra[\simeq]{\toLLno a\circ\LLno a}\LLno a\circ\LLno a.
\end{align*}
The second row of isomorphisms here shows that
the essential image of $\RGno a$ in $\catd(R)$ is $\catdator$,
and the essential image of $\LLno a$ in $\catd(R)$ is $\catdacomp$.
\end{fact}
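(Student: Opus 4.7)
For (a), the plan is to express everything via $C := \RRG aR \in \catd(\Comp Ra)$, apply standard adjunctions, and reduce to Fact~\ref{fact150626a}. Using $\RRG aX \simeq \Lotimes CX$ and $\LLL aY \simeq \Rhom CY$ from Fact~\ref{fact130619b} together with the hom--tensor adjunction yields
$$\Rhom[\Comp Ra]{\RRG aX}{\LLL aY} \simeq \Rhom{X}{\Rhom[\Comp Ra]{C}{\LLL aY}}.$$
I then identify $\Rhom[\Comp Ra]{C}{\LLL aY} \simeq \LLL aY$ as follows: the $\Comp Ra$-analog of the second formula of Fact~\ref{fact130619b}, applied to the ideal $\fa\Comp Ra$ and combined with the flat base-change isomorphism $\RGa{a}{\Comp Ra} \simeq \RG{a}{\Comp Ra} \simeq C$, gives $\LLa{a}{-} \simeq \Rhom[\Comp Ra]{C}{-}$; and $\LLL aY$ is $\fa\Comp Ra$-adically derived complete by the $\Comp Ra$-version of Fact~\ref{fact150626a}, so $\LLa{a}{\LLL aY} \simeq \LLL aY$. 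Therefore $\Rhom[\Comp Ra]{\RRG aX}{\LLL aY} \simeq \Rhom{X}{\LLL aY}$.

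To connect the remaining two expressions to this middle term, I use the natural morphism $\RRG aX \to \LLL aX$ in $\catd(\Comp Ra)$ obtained as the $\Comp Ra$-lift of the composite $\RG aX \to X \to \LL aX$ induced by $\fromRGno a$ and $\toLLno a$, and the analogous morphism at $Y$. Precomposition and postcomposition yield natural $\Comp Ra$-linear morphisms
$$\Rhom[\Comp Ra]{\LLL aX}{\LLL aY} \to \Rhom[\Comp Ra]{\RRG aX}{\LLL aY} \leftarrow \Rhom[\Comp Ra]{\RRG aX}{\RRG aY}.$$
Applying the conservative forgetful functor $Q\colon \catd(\Comp Ra)\to\catd(R)$ turns each into the corresponding $R$-linear morphism in Fact~\ref{fact150626a}, which is an isomorphism; hence so is each $\Comp Ra$-linear morphism.

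For (b), I first verify that $\RRGno a$ maps $\catdator$ into $\catdaator$: for $X \in \catdator$ we have $Q(\RRG aX) \simeq \RG aX \simeq X$, which is $\fa$-torsion, and the $\fa\Comp Ra$-torsion condition corresponds to $\fa$-torsion under $Q$. The identity $Q \circ \RRGno a \simeq \RGno a \simeq \id$ on $\catdator$ is immediate. For the reverse, given $M \in \catdaator$ we have $Q(M) \in \catdator$, so there is an $R$-linear isomorphism $\RG a Q(M) \simeq Q(M)$; this lifts to a $\Comp Ra$-linear isomorphism $\RRG a Q(M) \simeq M$ in $\catdaator$ because the $\Comp Ra$-module structure on an $\fa$-torsion object in $\catd(R)$ is uniquely determined by its $R$-module structure, so any $R$-linear morphism between such objects is automatically $\Comp Ra$-linear. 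Part (c) is proved by the dual argument, with $\LLLno a$ replacing $\RRGno a$, $\catdacomp$ replacing $\catdator$, and the uniqueness of the $\Comp Ra$-structure on $\fa$-complete objects.

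The main obstacle will be these uniqueness-of-$\Comp Ra$-structure arguments used to upgrade morphisms from $\catd(R)$ to $\catd(\Comp Ra)$: one needs the forgetful functor $Q$ to be fully faithful on $\catdaator$ and on $\catdaacomp$. This requires a careful choice of semi-projective or semi-injective resolutions over $\Comp Ra$ and an appeal to the standard fact that $\Comp Ra$-actions on $\fa$-torsion modules factor through $R/\fa^n$ for some $n$, while those on $\fa$-complete modules are determined by compatibility with the defining inverse system $\{R/\fa^n\}$.
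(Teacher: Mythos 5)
Your proposal does not address the statement in question. Fact~\ref{fact150626a} is classical Greenlees--May duality and MGM equivalence over $R$ itself: the natural isomorphisms among
$\Rhom{\RG aX}{\RG aY}$, $\Rhom{\RG aX}{Y}$, $\Rhom{\RG aX}{\LL aY}$, $\Rhom{X}{\LL aY}$, $\Rhom{\LL aX}{\LL aY}$
in $\catd(R)$, the four composition isomorphisms for $\RGno a$ and $\LLno a$, and the resulting identification of the essential images with $\catdator$ and $\catdacomp$. In the paper this is labeled a \emph{Fact} precisely because it is not proved there; it is quoted from Alonso Tarr{\'{\i}}o--Jerem{\'{\i}}as L{\'o}pez--Lipman~\cite[Theorem~(0.3)$^*$ and its corollary]{lipman:lhcs} and Porta--Shaul--Yekutieli~\cite[Theorems~1.2 and~6.12]{yekutieli:hct}. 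There is no in-paper proof to compare against.

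What you actually prove is a different result: the \emph{extended} Greenlees--May duality and MGM equivalence over $\Comp Ra$, i.e.\ Theorem~\ref{thm151129a} of the introduction (proved in the body as Theorems~\ref{thm151002a}, \ref{thm151003a}, and~\ref{thm151003b}), involving $\RRGno a$, $\LLLno a$, $\Rhom[\Comp Ra]--$, and the categories $\catdaator$, $\catdaacomp$. Your own text makes this explicit: you open by saying you will ``reduce to Fact~\ref{fact150626a},'' i.e.\ you are \emph{using} the statement under review as a known input rather than establishing it. Since Fact~\ref{fact150626a} is a citation rather than an internal result, the correct ``proof'' here is simply the reduction to the literature.

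As a side remark on the result you did prove: the paper sidesteps the difficulty you flag at the end of your proposal (full-faithfulness of the forgetful functor $Q$ on $\catdaator$ and $\catdaacomp$, i.e.\ uniqueness of the $\Comp Ra$-structure). In Theorem~\ref{thm151003a} the paper instead fixes a semi-injective resolution $X\xra{\simeq}I$ over $\Comp Ra$, uses flatness of $\Comp Ra$ over $R$ to see that $Q(I)$ is semi-injective over $R$, and observes that $\Gamma_\fa(Q(I))=\Gamma_{\fa\Comp Ra}(I)$ on $\Comp Ra$-complexes; the isomorphism $\RRG a{Q(X)}\simeq X$ then falls out directly over $\Comp Ra$ without any uniqueness argument. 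That is a cleaner route than the one you identify as your main obstacle.
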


\begin{fact}\label{cor130528a}
Let $X\in\catd(R)$. Then 
we know that $\supp_R(X)\subseteq\VE(\fa)$ if and only if $X\in\catdator$ if and only if each homology module $\HH_i(X)$ is $\fa$-torsion,
by~\cite[Proposition~5.4]{sather:scc}
and~\cite[Corollary~4.32]{yekutieli:hct}.\footnote{The affiliated characterization of $\catdacomp$ in terms of co-support is not needed here.}
\end{fact}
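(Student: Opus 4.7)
The plan is to establish a cycle of implications linking the three conditions, passing through the intermediate condition that each $\HH_i(X)$ is $\fa$-torsion, since this is the most computationally tractable reformulation of both $X \in \catdator$ and of $\supp_R(X) \subseteq \VE(\fa)$.

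First I would prove $X \in \catdator \Rightarrow$ each $\HH_i(X)$ is $\fa$-torsion. Fix a semi-injective resolution $X \xra\simeq I$, so $\RG aX \simeq \Gamma_\fa(I)$. The terms of $\Gamma_\fa(I)$ are all $\fa$-torsion modules, and since the class of $\fa$-torsion modules is closed under subobjects and quotients, each homology module $\HH_i(\Gamma_\fa(I))$ is $\fa$-torsion. The hypothesis $\RG a X \simeq X$ transfers this property to $\HH_i(X)$.

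Next I would show the converse: if each $\HH_i(X)$ is $\fa$-torsion, then $\fromRG aX$ is an isomorphism. The key lemma is that $\RG a M \simeq M$ for any $\fa$-torsion module $M$, since $\HH^0_\fa(M) = M$ and the higher local cohomology modules $\HH^i_\fa(M)$ vanish for $i > 0$ when $M$ is $\fa$-torsion. To promote this from modules to complexes, I would use the soft truncation triangles to induct on the amplitude of $X$ for bounded complexes, and for unbounded $X$ invoke the hypercohomology spectral sequence
\[
E_2^{p,q} = \HH^{-p}_\fa(\HH_q(X)) \Rightarrow \HH_{p+q}(\RG aX),
\]
which collapses onto the $p=0$ row and identifies $\HH_i(\RG aX)$ with $\HH_i(X)$ via $\fromRG aX$.

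For the support equivalence, Fact~\ref{fact130619b} gives $\RG aX \simeq \Lotimes{\RG aR}{X}$, and Fact~\ref{cor130528aw} together with the known identity $\supp_R(\RG aR) = \VE(\fa)$ yields $\supp_R(\RG aX) = \VE(\fa) \cap \supp_R(X)$. So $X \in \catdator$ forces $\supp_R(X) \subseteq \VE(\fa)$. For the reverse, let $Z$ be the cofiber of $\fromRG aX$; by idempotence of $\RG a$ (Fact~\ref{fact150626a}) we get $\RG aZ \simeq 0$, and the support formulas combined with the hypothesis give $\supp_R(Z) \subseteq \VE(\fa)$. To conclude $Z \simeq 0$ via Fact~\ref{cor130528aw}, it suffices to show $\Lotimes{\kappa(\p)}{Z} \simeq 0$ for every $\p \in \VE(\fa)$. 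For such $\p$ one has $\Lotimes{\kappa(\p)}{\RG aR} \simeq \kappa(\p)$ (a local computation using that $\fa \subseteq \p$), whence $\Lotimes{\kappa(\p)}{Z} \simeq \Lotimes{\kappa(\p)}{\RG aZ} \simeq 0$.

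The main obstacle is the last step, where one must argue that the small-support hypothesis and the vanishing $\RG aZ \simeq 0$ together force $Z \simeq 0$; the delicate point is the identity $\Lotimes{\kappa(\p)}{\RG aR} \simeq \kappa(\p)$ for $\p \in \VE(\fa)$, which in turn rests on a careful analysis of the stable Koszul (\v{C}ech) complex modeling $\RG aR$. The rest is bookkeeping that can be cited directly from~\cite[Proposition~5.4]{sather:scc} and~\cite[Corollary~4.32]{yekutieli:hct}.
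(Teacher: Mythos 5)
This statement is a \emph{Fact} in the paper: it is recorded with pointers to \cite[Proposition~5.4]{sather:scc} and \cite[Corollary~4.32]{yekutieli:hct}, and the paper gives no argument of its own. So there is no in-paper proof to compare against; your blind reconstruction is in effect a proof of the cited results. Your argument is essentially correct and tracks the standard line of reasoning in the cited references: (i) the implication $X\in\catdator\Rightarrow$ torsion homology via closure of torsion modules under subquotients is clean; (ii) the converse via the hyper-derived spectral sequence $E_2^{p,q}=\HH^{-p}_\fa(\HH_q(X))\Rightarrow\HH_{p+q}(\RG aX)$ is valid, but you should make explicit that convergence for unbounded $X$ is saved by the fact that $\fa$ is finitely generated, so $\RG a$ has finite cohomological dimension and the spectral sequence has only finitely many nonzero columns; and (iii) the support argument correctly uses the \v{C}ech model to get $\Lotimes{\kappa(\p)}{\RG aR}\simeq\kappa(\p)$ for $\p\in\VE(\fa)$ and $\simeq 0$ otherwise, giving $\supp_R(\RG aR)=\VE(\fa)$, after which the cone-of-$\fromRG aX$ argument closes the loop via Fact~\ref{cor130528aw}. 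One simplification worth noting: the spectral sequence step can be bypassed entirely if you instead invoke the identity $\supp_R(X)=\bigcup_i\supp_R(\HH_i(X))$ (valid for small support even for unbounded complexes, by Foxby or Benson--Iyengar--Krause); this gives torsion homology $\Rightarrow\supp_R(X)\subseteq\VE(\fa)$ directly and reduces the whole fact to the support characterization plus your implication (i).
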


\begin{fact}\label{lem151117a}
Let $Y\in\catd(R)$, and consider the following exact triangles in $\catd(R)$.
\begin{align*}
\RG aY\xra{\fromRG aY}Y\to B\to &&
Y\xra{\toLL aY}\LL aY\to C\to 
\end{align*}
By~\cite[Corollary~4.9]{benson:csc} one has 
\begin{gather*}
\supp_R(B)\bigcap\VE(\fa)=\emptyset=\cosupp_R(B)\bigcap\VE(\fa)\\
\supp_R(C)\bigcap\VE(\fa)=\emptyset=\cosupp_R(C)\bigcap\VE(\fa).
\end{gather*}
\end{fact}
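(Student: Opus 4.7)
The plan is to apply $\RGno a$ and $\LLno a$ to each of the two given triangles, use the Greenlees-May idempotency isomorphisms of Fact~\ref{fact150626a} to deduce that $\RG aB$, $\LL aB$, $\RG aC$, and $\LL aC$ all vanish in $\catd(R)$, and then translate each such vanishing into the claimed support/co-support statement by way of Facts~\ref{fact130619b} and~\ref{cor130528aw}.

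For the first triangle, applying $\RGno a$ yields a triangle $\RG a\RG aY\to\RG aY\to\RG aB\to$ whose first morphism is the Greenlees-May isomorphism from Fact~\ref{fact150626a}, forcing $\RG aB\simeq 0$. Applying $\LLno a$ produces the triangle $\LL a\RG aY\to\LL aY\to\LL aB\to$ whose first morphism is again an isomorphism by Fact~\ref{fact150626a}, so $\LL aB\simeq 0$. The identical argument for the second triangle $Y\to\LL aY\to C\to$, this time using the isomorphisms $\RG aY\xra\simeq\RG a\LL aY$ and $\LL aY\xra\simeq\LL a\LL aY$ of Fact~\ref{fact150626a}, yields $\RG aC\simeq 0$ and $\LL aC\simeq 0$.

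I then rewrite each vanishing via Fact~\ref{fact130619b} as $\Lotimes{\RG aR}{B}\simeq 0$ and $\Rhom{\RG aR}{B}\simeq 0$, and analogously for $C$. Invoking the intersection formulas of Fact~\ref{cor130528aw} gives
\begin{align*}
\supp_R(\RG aR)\cap\supp_R(B)&=\supp_R(\Lotimes{\RG aR}{B})=\emptyset,\\
\supp_R(\RG aR)\cap\cosupp_R(B)&=\cosupp_R(\Rhom{\RG aR}{B})=\emptyset,
\end{align*}
and the same equalities with $C$ in place of $B$. It therefore suffices to check $\VE(\fa)\subseteq\supp_R(\RG aR)$: for any $\p\in\VE(\fa)$ the residue field $\kappa(\p)$ is $\fa$-torsion, so Fact~\ref{fact130619b} gives $\Lotimes{\RG aR}{\kappa(\p)}\simeq\RG a\kappa(\p)\simeq\kappa(\p)\not\simeq 0$, placing $\p$ in $\supp_R(\RG aR)$.

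I do not foresee a serious obstacle. The one point to monitor is that applying an exact functor to each rotated triangle truly delivers the desired cone as its third term, but this is automatic. Note also that the precise identification $\supp_R(\RG aR)=\VE(\fa)$ is unnecessary here; only the containment $\VE(\fa)\subseteq\supp_R(\RG aR)$ enters the final step.
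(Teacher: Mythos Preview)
Your argument is correct in outline and supplies a self-contained proof where the paper simply cites \cite[Corollary~4.9]{benson:csc}. There is, however, a subtle mis-citation that you should fix. When you apply the functor $\RGno a$ to the morphism $\fromRG aY$, the resulting map $\RG a\RG aY\to\RG aY$ is the component of the natural transformation $\RGno a\circ\fromRGno a$, \emph{not} of $\fromRGno a\circ\RGno a$. These are two genuinely different morphisms with the same source and target (the paper itself stresses this in Fact~\ref{lem150907a}: ``Note the slight difference between these and the last two isomorphisms in Fact~\ref{fact150626a}''). The isomorphism you need here is recorded in Fact~\ref{lem150907a}, not in Fact~\ref{fact150626a}. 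The same issue arises when you apply $\LLno a$ to $\toLL aY$: you obtain the component of $\LLno a\circ\toLLno a$, which is again Fact~\ref{lem150907a}. Your citations for the other two cases ($\LLno a$ applied to the first triangle, $\RGno a$ applied to the second) are correct as written. Since Fact~\ref{lem150907a} is itself established by external references and does not depend on the present statement, no circularity arises; you just need to invoke it.

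With that correction, your argument is sound. The final step---showing $\VE(\fa)\subseteq\supp_R(\RG aR)$ via $\RG a{\kappa(\p)}\simeq\kappa(\p)$ for $\p\in\VE(\fa)$---is fine, though the isomorphism $\RG a{\kappa(\p)}\simeq\kappa(\p)$ follows from Fact~\ref{cor130528a} (since $\kappa(\p)$ is $\fa$-torsion) rather than directly from Fact~\ref{fact130619b}. Compared to the paper, you have unpacked the content of the cited external result using only the machinery already assembled in Section~\ref{sec140109b}; this is a reasonable and instructive alternative to the bare citation.
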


\begin{fact}\label{lem150907a}
The following natural transformations 
are isomorphisms
\begin{align*}
\RGno a\circ\RGno a\xra[\simeq]{\RGno a\circ\fromRGno a}\RGno a\circ\id
&&\LLno a\circ\id\xra[\simeq]{\LLno a\circ\toLLno a}\LLno a\circ\LLno a
\end{align*}
by~\cite[Lemma~3.4(a)]{benson:lcstc},
\cite[(4.2)]{benson:csc},
and~\cite[Proposition~3.5.3]{lipman:llcd}.
Note the slight difference between these and the last two isomorphisms in Fact~\ref{fact150626a}.
Note also that one can obtain these isomorphisms as the special case $X=\RG aR$ of the next result.
\end{fact}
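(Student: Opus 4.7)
The plan is to reduce each claim to showing that an explicit cone in $\catd(R)$ vanishes, using the support and co-support vanishing recorded in Fact~\ref{lem151117a} together with the tensor/Hom identifications $\RG a-\simeq\Lotimes{\RG aR}{-}$ and $\LL a-\simeq\Rhom{\RG aR}{-}$ from Fact~\ref{fact130619b}. The key underlying observation is that $\supp_R(\RG aR)\subseteq\VE(\fa)$, which follows from Fact~\ref{cor130528a} applied to $\RG aR$, whose homology is $\fa$-torsion.

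For the first isomorphism, I would apply $\RGno a$ to the exact triangle $\RG aY\xra{\fromRG aY}Y\to B\to$ of Fact~\ref{lem151117a}. By naturality this produces the exact triangle
$$\RG a(\RG aY)\xra{\RG a(\fromRG aY)}\RG aY\to\RG aB\to,$$
so it suffices to prove $\RG aB\simeq 0$. Combining Fact~\ref{fact130619b} with the support product formula in Fact~\ref{cor130528aw} yields
$$\supp_R(\RG aB)=\supp_R(\RG aR)\cap\supp_R(B)\subseteq\VE(\fa)\cap\supp_R(B)=\emptyset,$$
where the last equality is from Fact~\ref{lem151117a}. Then $\RG aB\simeq 0$ by Fact~\ref{cor130528aw}, as desired.

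The second isomorphism is handled symmetrically. Applying $\LLno a$ to the exact triangle $Y\xra{\toLL aY}\LL aY\to C\to$ of Fact~\ref{lem151117a} gives
$$\LL aY\xra{\LL a(\toLL aY)}\LL a(\LL aY)\to\LL aC\to,$$
so it suffices to show $\LL aC\simeq 0$. Using $\LL aC\simeq\Rhom{\RG aR}{C}$ together with the co-support formula in Fact~\ref{cor130528aw} yields
$$\cosupp_R(\LL aC)=\supp_R(\RG aR)\cap\cosupp_R(C)\subseteq\VE(\fa)\cap\cosupp_R(C)=\emptyset,$$
again by Fact~\ref{lem151117a}. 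Hence $\LL aC\simeq 0$ by Fact~\ref{cor130528aw}.

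I do not anticipate a genuine obstacle: the strategy replaces the abstract ``idempotency'' of $\RGno a$ and $\LLno a$ with the concrete assertion that their complementary pieces $B$ and $C$ are killed when the functors are reapplied, which is precisely what the off-$\VE(\fa)$ (co-)support vanishing in Fact~\ref{lem151117a} records. The only point requiring mild care is the naturality verification that applying $\RGno a$ (respectively $\LLno a$) to the defining triangle produces the map $\RGno a\circ\fromRGno a$ (respectively $\LLno a\circ\toLLno a$), but this is immediate from functoriality of $\Gamma_{\fa}$ on semi-injective resolutions (respectively $\Lambda^{\fa}$ on semi-flat resolutions).
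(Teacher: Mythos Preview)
Your argument is correct and follows essentially the same route as the paper: the paper does not prove Fact~\ref{lem150907a} directly but notes it is the special case $X=\RG aR$ of Lemma~\ref{prop151104a}, whose proof proceeds exactly as you do---apply the functor to the defining triangle from Fact~\ref{lem151117a}, then use the (co)support formulas from Fact~\ref{cor130528aw} together with $\supp_R(\RG aR)\subseteq\VE(\fa)$ to kill the cone.
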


\begin{lem}\label{prop151104a}
Let $X\in\catd(R)$ be such that  $\supp_R(X)\subseteq\VE(\fa)$ or $\cosupp_R(X)\subseteq\VE(\fa)$;
e.g.,  $X\simeq K$.
Then the following natural transformations 
are isomorphisms.
\begin{gather*}
\Lotimes X{\RG a-}\xra[\simeq]{\Lotimes X{\fromRGno a}}\Lotimes X{-}\xra[\simeq]{\Lotimes X{\toLLno a}}\Lotimes X{\LL a-}
\\
\Rhom X{\RG a-}\!\xra[\simeq]{\!\Rhom[] X{\fromRGno a}\!}\Rhom{X}{-}\!\xra[\simeq]{\!\Rhom[] X{\toLLno a}\!}\Rhom{X}{\LL a-}
\\
\Rhom {\LL a-}X\!\xra[\simeq]{\!\Rhom[] {\toLLno a}X\!}
\Rhom {-}X
\!\xra[\simeq]{\!\Rhom[] {\fromRGno a}X\!}\Rhom {\RG a-}X
\end{gather*}
\end{lem}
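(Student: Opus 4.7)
The plan is to verify each of the six natural transformations is an isomorphism by showing its cone vanishes in $\catd(R)$. Using the exact triangles $\RG aY \xra{\fromRG aY} Y \to B \to$ and $Y \xra{\toLL aY} \LL aY \to C \to$ of Fact~\ref{lem151117a}, the relevant cones are $\Lotimes X{B}, \Lotimes X{C}$ for Row~1; $\Rhom X{B}, \Rhom X{C}$ for Row~2; and $\Rhom BX, \Rhom CX$ for Row~3. By Fact~\ref{lem151117a}, both $\supp_R(B), \cosupp_R(B)$ and $\supp_R(C), \cosupp_R(C)$ are disjoint from $\VE(\fa)$, which is the crucial structural input.

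The support/cosupport formulas of Fact~\ref{cor130528aw} dispose of half the cases immediately. Under Case~1 ($\supp_R(X)\subseteq\VE(\fa)$), I would compute $\supp_R(\Lotimes XB)=\supp_R(X)\cap\supp_R(B)\subseteq\VE(\fa)\cap\supp_R(B)=\emptyset$ and analogously $\cosupp_R(\Rhom XB)=\supp_R(X)\cap\cosupp_R(B)=\emptyset$, giving Rows~1 and~2 (with the same argument replacing $B$ by $C$). Under Case~2 ($\cosupp_R(X)\subseteq\VE(\fa)$), the identity $\cosupp_R(\Rhom BX)=\supp_R(B)\cap\cosupp_R(X)\subseteq\supp_R(B)\cap\VE(\fa)=\emptyset$ delivers Row~3, again with $C$ in place of $B$.

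For the remaining halves---Row~3 under Case~1 and Rows~1--2 under Case~2---I would invoke Greenlees--May duality (Fact~\ref{fact150626a}) combined with the characterization $X\simeq\RG aX$ (or the dual in Case~2) coming from Fact~\ref{cor130528a}. In Case~1, since $\cosupp_R(B)\cap\VE(\fa)=\emptyset$ and $\LL aB\simeq\Rhom{\RG aR}B$ has cosupport $\supp_R(\RG aR)\cap\cosupp_R(B)\subseteq\VE(\fa)\cap\cosupp_R(B)=\emptyset$, we obtain $\LL aB\simeq 0$; Fact~\ref{fact150626a} then yields $\Rhom B{\LL aX}\simeq\Rhom{\LL aB}{\LL aX}\simeq 0$. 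I would leverage the triangle $X\to\LL aX\to C_X\to$ (with $C_X$ enjoying the support/cosupport vanishing of Fact~\ref{lem151117a}) to relate $\Rhom BX$ to $\Rhom B{\LL aX}$ and $\Rhom B{C_X}$, iterating with $\RG aB\simeq 0$ and the identities $\RG a\circ\LL a\simeq\RG a$, $\LL a\circ\RG a\simeq\LL a$ to conclude $\Rhom BX\simeq 0$. A symmetric tensor-Hom argument using $\RG aB\simeq 0$ would handle the vanishings $\Lotimes XB\simeq 0$ and $\Rhom XB\simeq 0$ in Case~2.

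The hardest step will be the Row~3 vanishing under Case~1 (and dually, Rows~1--2 under Case~2): the support formula alone does not deliver it, and the argument must instead route through Greenlees--May duality applied to $\LL aX$ (or $\RG aX$) and then transfer the vanishing back to $X$ via the triangles defining $B$, $C$, $C_X$, and the analogous $B_X$. Making this iterative transfer rigorous, rather than just pointing out that one direct cosupport computation fails, is where the real work lies.
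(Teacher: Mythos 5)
Your handling of the four direct cases matches the paper's argument exactly: Rows~1--2 under the support hypothesis use $\supp_R(\Lotimes XB)=\supp_R(X)\cap\supp_R(B)$ and $\cosupp_R(\Rhom XB)=\supp_R(X)\cap\cosupp_R(B)$, and Row~3 under the cosupport hypothesis uses $\cosupp_R(\Rhom BX)=\supp_R(B)\cap\cosupp_R(X)$; the paper writes out only the first of these and dismisses the rest as ``follow similarly.'' You are also right that Fact~\ref{cor130528aw} gives nothing in the mismatched cases, and right to single them out. The problem is that your proposed Greenlees--May route cannot close, because the mismatched cases are in fact \emph{false}. Take $R=\bbz$, $\fa=(p)$, $X=\bbz_{p^\infty}$. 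Then $X$ is $\fa$-torsion, so $\supp_R(X)=\{(p)\}\subseteq\VE(\fa)$; but $\RG aR\simeq\shift^{-1}\bbz_{p^\infty}$, so $\Rhom{\RG aR}{X}\simeq\shift\Hom{\bbz_{p^\infty}}{\bbz_{p^\infty}}\simeq\shift\bbz_p$, which is not isomorphic to $\Rhom{R}{X}\simeq\bbz_{p^\infty}$. Hence $\Rhom{\fromRGno a}{X}$ fails to be an isomorphism at $Y=R$. Dually, $X=\bbz_p$ has $\cosupp_R(X)\subseteq\VE(\fa)$ but $\supp_R(X)=\{(0),(p)\}$, and $\Lotimes{\bbz_p}{\cone(\fromRG aR)}\simeq\Lotimes{\bbz_p}{\bbz[1/p]}\simeq\bbq_p\neq 0$, killing Row~1 at $Y=R$.

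Your sketch does correctly establish $\Rhom{B}{\LL aX}\simeq 0$ (in the example, $\LL aX\simeq\shift\bbz_p$ and $\Rhom{\bbz[1/p]}{\bbz_p}\simeq 0$ by the cosupport formula), but the transfer back to $\Rhom{B}{X}$ along the triangle $X\to\LL aX\to C_X\to$ would require $\Rhom{B}{C_X}\simeq 0$, and in the example $\Rhom{B}{C_X}\simeq\shift\Rhom{B}{X}\simeq\shift\Hom{\bbz[1/p]}{\bbz_{p^\infty}}\neq 0$. This is not a gap in rigor that iteration can fill; it is a genuine obstruction. Indeed, under the support hypothesis $X\simeq\RG aX$, and GM duality identifies $\Rhom{\RG aY}{X}$ with $\Rhom{Y}{\LL aX}$, so Row~3 becomes the assertion $X\simeq\LL aX$, i.e., $\catdator\subseteq\catdacomp$, which is false. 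So the lemma as printed is overreaching: the support hypothesis gives Rows~1--2, the cosupport hypothesis gives Row~3, and all six require both (as they do for $X\simeq K$, for $X\simeq\RG aR$ when only Rows~1--2 are invoked, and in every actual application in the paper). Your instinct that the cross cases are ``where the real work lies'' was correct; the missing realization is that they are not hard but false.
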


\begin{proof}
Let $Y\in\catd(R)$, and consider the exact triangle
$$\RG aY\xra{\fromRG aY}Y\to B\to$$
in $\catd(R)$, and the following induced triangle.
\begin{equation}\label{eq151117b}
\Lotimes X{\RG aY}\xra{\Lotimes X{\fromRG aY}}\Lotimes X{Y}\to \Lotimes X{B}\to
\end{equation}
Facts~\ref{cor130528a} and~\ref{lem151117a} yield the next sequence
\begin{align*}
\supp_R(\Lotimes XB)
&=\supp_R(X)\bigcap\supp_R(B) 
\subseteq\VE(\fa)\bigcap\supp_R(B) 
=\emptyset.
\end{align*}
We conclude that $\Lotimes XB\simeq 0$, so the exact triangle~\eqref{eq151117b} implies that $\Lotimes X{\fromRG aY}$ is an isomorphism in $\catd(R)$.
The other  isomorphisms from the statement of this result follow similarly.
\end{proof}

\subsection*{Adic Finiteness}
The next two items take their cues from work of 
Hartshorne~\cite{hartshorne:adc},
Kawasaki~\cite{kawasaki:ccma,kawasaki:ccc}, and
Melkersson~\cite{melkersson:mci}.

\begin{fact}[\protect{\cite[Theorem 1.3]{sather:scc}}]
\label{thm130612a}
For $X\in\catd_{\text b}(R)$, the next conditions are equivalent.
\begin{enumerate}[\rm(i)]
\item\label{cor130612a1}
One has $\Lotimes{K^R(\underline{y})}{X}\in\catdfb(R)$  for some (equivalently for every) generating sequence $\underline{y}$ of $\fa$.
\item\label{cor130612a2}
One has  $\Lotimes{X}{R/\mathfrak{a}}\in\catd^{\text{f}}(R)$.
\item\label{cor130612a3}
One has  $\Rhom{R/\mathfrak{a}}{X}\in\catd^{\text{f}}(R)$.
\item\label{cor130612a4}
One has $\LLL aX\in\catdfb(\Comp Ra)$.
\end{enumerate}
\end{fact}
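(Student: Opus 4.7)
The plan is to establish the equivalences in two stages: first the classical Koszul-style conditions (i), (ii), (iii), which are essentially $R$-level finiteness criteria, and then the bridge to the derived local homology test (iv) over $\Comp Ra$. The ``some $\Leftrightarrow$ every'' clause in (i) would be handled at the outset: if $\underline{y}$ and $\underline{y}'$ are two finite generating sequences of $\fa$, then $K^R(\underline{y})$ and $K^R(\underline{y}')$ are related through bounded complexes of finitely generated free $R$-modules, and the finiteness of $\Lotimes{K^R(\underline{y})}{X}$ is invariant under such manipulations.

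The crucial input for (i) $\Leftrightarrow$ (ii) $\Leftrightarrow$ (iii) is that each $y_i$ acts null-homotopically on $K$, so $\HH_*(\Lotimes{K}{X})$ is $\fa$-torsion and $\Lotimes{K}{X} \in \catdator$. I would prove (i) $\Leftrightarrow$ (ii) by induction on the length $t$ of $\underline{y}$, using that $K^R(\underline{y})$ is the mapping cone of multiplication by $y_t$ on $K^R(y_1, \ldots, y_{t-1})$ and running a parallel reduction modulo the $y_i$'s for the $\Lotimes{R/\fa}{X}$ side; the base $t=0$ is trivial. The equivalence (ii) $\Leftrightarrow$ (iii) then follows from the Koszul self-duality $\Rhom{K}{R} \simeq K[-t]$, which yields $\Rhom{K}{X} \simeq \Lotimes{K}{X}[-t]$ for $X \in \catdb(R)$, together with the dual version of the inductive argument applied to $\Rhom{R/\fa}{X}$.

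For (i) $\Leftrightarrow$ (iv), the key tool is the isomorphism $\LLL{a}{X} \simeq \Rhom{\RRG{a}{R}}{X}$ from Fact~\ref{fact130619b} combined with a \v{C}ech-telescope model of $\RRG{a}{R}$ built from Koszul data on the powers $\underline{y}^n$. Direction (iv) $\Rightarrow$ (i) is relatively direct: Lemma~\ref{prop151104a} gives $\Lotimes{K}{X} \simeq \Lotimes{K}{\LL{a}{X}}$, whose homology is $\fa$-torsion and carries a natural $\Comp Ra$-structure, so $\Comp Ra$-finiteness of $\LLL{a}{X}$ yields finite generation of $\Lotimes{K}{X}$ (over $\Comp Ra$, hence over $R$, since $\fa$-torsion finitely generated $\Comp Ra$-modules are finitely generated over $R$). \textbf{The main obstacle} is the reverse direction (i) $\Rightarrow$ (iv): one must pass from $R$-finiteness of the single Koszul object $\Rhom{K(\underline{y})}{X}$ to $\Comp Ra$-finiteness of an inverse limit $\lim_n \Rhom{K(\underline{y}^n)}{X}$ computing $\LLL{a}{X}$. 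This requires a careful argument that the inverse system is essentially $\fa$-adically constant and that each $\Rhom{K(\underline{y}^n)}{X}$ inherits Koszul finiteness from $\Rhom{K(\underline{y})}{X}$, using the boundedness of $X$ and the bounds on flat and injective dimensions of $\RRG{a}{R}$ developed in Section~\ref{sec151003a}.
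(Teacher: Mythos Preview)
The paper does not contain a proof of this statement: it is recorded as a Fact and cited from \cite[Theorem~1.3]{sather:scc}, so there is no in-paper argument to compare your proposal against.

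On its own merits: your treatment of (i)$\Leftrightarrow$(ii)$\Leftrightarrow$(iii) via Koszul induction and self-duality is standard and sound, and (iv)$\Rightarrow$(i) works as you sketch it. You are right that (i)$\Rightarrow$(iv) is the substantive step, but the inverse-limit route you propose is both vague and harder than necessary (and invoking Section~\ref{sec151003a} here is awkward, though not genuinely circular since the Fact is externally sourced). The cleaner mechanism is a derived Nakayama argument over $\Comp Ra$: boundedness of $\LLL aX$ is immediate from $X\in\catdb(R)$ and the \v Cech-complex bound $\fd_R(\RG aR)<\infty$ via $\LLL aX\simeq\Rhom{\RRG aR}{X}$; for degreewise finite generation, the same isomorphism $\Lotimes{K}{-}\simeq\Lotimes{K}{\LL a-}$ you already used, transported to $\Comp Ra$, identifies $\Lotimes[\Comp Ra]{\comp K}{\LLL aX}$ with $\Lotimes{\Comp Ra}{(\Lotimes KX)}\in\catdfb(\Comp Ra)$. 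Since $\fa\Comp Ra\subseteq\Jac(\Comp Ra)$ and $\LLL aX$ is derived $\fa\Comp Ra$-complete, a Nakayama-type finiteness criterion then yields $\LLL aX\in\catdfb(\Comp Ra)$ directly, with no inverse-limit bookkeeping.
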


\begin{defn}\label{def120925d}
An $R$-complex $X\in\catdb(R)$ is \emph{$\mathfrak{a}$-adically finite} if it satisfies the equivalent conditions of Fact~\ref{thm130612a} and $\operatorname{supp}_R(X) \subseteq \operatorname{V}(\mathfrak{a})$.
\end{defn}

\begin{ex}\label{ex160206a}
Let $X\in\catdb(R)$ be given.
\begin{enumerate}[(a)]
\item \label{ex160206a1}
If $X\in\catdfb(R)$, then we have $\supp_R(X)=\VE(\fb)$ for some ideal $\fb$, and it follows that $X$ is $\fa$-adically finite
whenever $\fa\subseteq\fb$. (The case $\fa=0$ is from~\cite[Proposition~7.8(a)]{sather:scc}, and the general case follows readily.)
\item \label{ex160206a2}
$K$ and $\RG aR$ are $\fa$-adically finite, by~\cite[Fact~3.4 and Theorem~7.10]{sather:scc}.
\item \label{ex160206a3}
The homology modules of $X$ are artinian if and only if there is an ideal $\fa$ of finite colength (i.e., such that $R/\fa$ is artinian)
such that $X$ is $\fa$-adically finite, by~\cite[Proposition~5.11]{sather:afcc}.
\end{enumerate}
\end{ex}

\section{Computing Derived Functors}\label{sec150922a}

Lipman~\cite[Lemma~3.5.1]{lipman:llcd} shows that, to compute $\RG aX$, one need not use a semi-injective resolution of $X$;
it suffices to use an injective resolution of $X$, i.e., a quasiisomorphism $X\xra\simeq I$ where $I$ is a complex of injective $R$-modules. 
This fact, along with the fact that $\Gamma_{\fa}$ transforms complexes of injective modules into complexes of injective modules,
is the essence of the proof of the first isomorphism in Fact~\ref{lem150907a}.
Our next example shows that~\cite[Lemma~3.5.1]{lipman:llcd} is crucial here, as it shows that $\Gamma_{\fa}$ 
need not respect the class of semi-injective complexes.

\begin{ex}\label{ex150907a}
We consider the following special case of a construction of Chen and Iyengar~\cite[Proposition 2.7]{chen:sirccr}.
Let $k$ be a field, set $R=k[\![X,Y]\!]/(X^2)$, and let $x$ and $y$ denote the residues in $R$ of $X$ and $Y$, respectively.
Set $\n=(x,y)R$ and $\p=xR$, and
consider the injective hull $E=E_R(R/\n)$.
We consider the  complexes
\begin{align*}
F:=&(\cdots\xra xR\xra xR\to 0) & G&:=\bigoplus_{n\in\bbz}\shift^nF \\
I:=&\Hom FE\cong(0\to E\xra xE\xra x\cdots)&J&:=\Hom GE\cong\prod_{n\in\bbz}\shift^nI
\end{align*}
The complex $F$ gives a semi-projective (hence, semi-flat) resolution of $R/xR$, and $I$ yields a semi-injective resolution of $M:=\Hom{R/xR}E$.
Also, $G$ describes a semi-projective (hence semi-flat) resolution of $\bigoplus_{\n\in\bbz}\shift^nR/xR$, and $J$ 
provides a semi-injective resolution of $\prod_{n\in\bbz}\shift^nM$.
Furthermore, $\p$ is an associated prime of each module $J_i\cong\prod_{n\geq i}E$, but $J_\p$ is not semi-injective over $R_\p$:
Chen and Iyengar prove this last claim by noting that $J_\p$ is acyclic  but not contractible over $R_\p$;
it follows that $J_\p$ is acyclic  but not contractible over $R$, so not semi-injective over $R$.

Claim: the complex $\Gamma_{\m}(J)$ is not semi-injective over $R$.
To see this, note that each module $J_i$ is a direct sum of copies of $E$ and copies of $E_R(R/\p)$.
It follows that we have the following natural short exact sequence of complexes:
$$0\to \Gamma_{\m}(J)\to J\to J_\p\to 0.$$
Since $J$ is semi-injective over $R$, the fact that these are complexes of injective $R$-modules implies that $\Gamma_{\m}(J)$ is semi-injective over $R$
if and only if $J_\p$ is so; hence, by the previous paragraph, the claim is established.
\end{ex}

The following lemma is used in the subsequent example.

\begin{lem}\label{lem151001a}
Assume that $(R,\m,k)$ is local and that $\spec(R)=\{\m,\p\}$ with $\p\subsetneq\m$.
Let $L$ be a free $R$-module, and set $L':=\Comp Lm/L$. 
Then $\Comp Lm$ is flat over $R$, and $L'$ is a flat $R_{\p}$-module (hence, $L'$ is also flat over $R$).
If $L$ is not finitely generated over $R$, then $L'\neq 0$.
\end{lem}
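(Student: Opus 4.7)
The plan is to work with the short exact sequence $0 \to L \to \Comp Lm \to L' \to 0$ and analyze its behavior under localization at $\p$. Writing $L \cong R^{(I)}$, Krull's intersection theorem gives $\bigcap_n \m^n L = 0$, so $L \to \Comp Lm$ is injective. Flatness of $\Comp Lm$ over $R$ I would deduce from the classical fact (Bartijn--Enochs) that the $\m$-adic completion of a flat module over a noetherian ring is flat; alternatively, one realizes $\Comp Lm$ as the submodule of $\prod_{i \in I}\Comp Rm$ consisting of families $(a_i)$ with $a_i \to 0$ in the $\m$-adic topology, and combines Chase's coherence result with the Mittag-Leffler property of the inverse system $\{(R/\m^n)^{(I)}\}_n$.

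The heart of the argument is to show that every $s \in \m \setminus \p$ acts bijectively on $L'$ (units of $R$ do so automatically), hence $L'$ is an $R_\p$-module. Because $\spec(R)=\{\m,\p\}$, the ideal $(s)$ is $\m$-primary and $\m^n \subseteq (s)$ for some $n$. Any $z \in \Comp Lm$ differs from a lift in $L$ of its image in $L/\m^n L$ by an element of $\m^n \Comp Lm \subseteq s\Comp Lm$, so $\Comp Lm = L + s\Comp Lm$ and $s$ is surjective on $L'$. For injectivity, if $sz \in L$, I would subtract the finitely-supported piece of $z$ indexed by the support of $sz$, reducing to the case that every coordinate $z_i$ of $z$ lies in $\ann_{\Comp Rm}(s) = \ann_R(s)\Comp Rm$ (by flatness of $\Comp Rm$). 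Since $s$ is a unit in the artinian local ring $R_\p$, the finitely generated ideal $\ann_R(s)$ has support $\{\m\}$ and hence finite length, so $\m^N\ann_R(s) = 0$; combining this with Artin-Rees yields $\ann_R(s) \cap \m^m R = 0$ for large $m$, and flatness of $\Comp Rm$ upgrades this to $\ann_R(s)\Comp Rm \cap \m^m\Comp Rm = 0$. The $\m$-adic convergence of $(z_i)$ then forces $z_i = 0$ cofinitely, so $z \in L$.

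For flatness of $L'$ over $R_\p$, I would localize the exact sequence at $\p$ to obtain $0 \to L_\p \to (\Comp Lm)_\p \to L' \to 0$ (using $L' \cong L'_\p$), whose left terms are $R_\p$-flat. Since $R_\p$ is artinian local, the local flatness criterion reduces the claim to injectivity of $\kappa(\p) \otimes_{R_\p} L_\p \to \kappa(\p) \otimes_R \Comp Lm$; if a finitely supported class $\sum \bar x_i \otimes e_i$ maps to zero, then some $t \in R\setminus\p$ satisfies $\sum (tx_i) e_i \in \p\Comp Lm$, so coordinate by coordinate $tx_i \in \p\Comp Rm \cap R = \p$ by faithful flatness, whence $x_i \in \p$ by primeness and the class vanishes. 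The non-vanishing statement I would obtain by choosing, for infinite $I$, distinct $i_n \in I$ and nonzero $x_n \in \m^n$ (which exist since $\m^n \neq 0$ for all $n$, else $R$ would be artinian local and $\p=\m$), and observing that $\sum_n x_n e_{i_n}$ is in $\Comp Lm$ but has infinite support. The main obstacle is the injectivity half of the $R_\p$-module claim: since $s$ can be a zero divisor in $R$, a na\"ive snake-lemma argument fails, and one must combine Artin-Rees with the artinian structure of $R_\p$ and the faithful flatness of $R \to \Comp Rm$ to control the coordinates of $z$.
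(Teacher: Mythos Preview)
Your approach differs substantially from the paper's. The paper never touches coordinates: it cites that $L\hookrightarrow\Comp Lm$ is pure, so $L'$ is flat over $R$; then from $L/\m L\cong\Comp Lm/\m\Comp Lm$ and right-exactness it gets $L'\otimes_R k=0$, hence $\m\notin\supp_R(L')$, and reads off the $R_\p$-structure from a minimal injective resolution of $L'$ (which can contain only copies of $E_R(R/\p)$). A second argument identifies $L'\cong\Ext 1{R_\p}{L}$ via an MGM-style triangle. Your element-chasing route is more self-contained and avoids the purity and small-support machinery; the paper's is shorter but leans on several external citations. Your treatments of flatness over $R_\p$ (via the local flatness criterion on the artinian ring $R_\p$) and of nonvanishing are fine and essentially parallel to the paper's.

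There is a real gap in your injectivity step when $R$ is not $\m$-complete. The ``finitely-supported piece of $z$ indexed by $\supp(sz)$'' has coordinates in $\Comp Rm$, not in $R$, so it need not lie in $L$; subtracting it therefore does not preserve the question of whether $z\in L$, and your reduction to the case $sz=0$ is invalid as written. The repair is short: since $(s)\supseteq\m^n$, the isomorphism $L/\m^n L\cong\Comp Lm/\m^n\Comp Lm$ descends to $L/sL\cong\Comp Lm/s\Comp Lm$, whence $L\cap s\Comp Lm=sL$; thus $sz\in L$ forces $sz=sl$ for some $l\in L$, and replacing $z$ by $z-l$ gives an honest reduction to $sz=0$. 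After your Artin--Rees argument shows cofinitely many coordinates vanish, one more observation is needed to finish: since $\ann_R(s)$ has finite length and $\m^N\ann_R(s)=0$, one has $\ann_R(s)\Comp Rm=\ann_R(s)\subseteq R$, so the surviving coordinates actually lie in $R$ and $z\in L$. With these two patches your argument goes through in full generality.
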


\begin{proof}
For the sake of brevity, set $\comp L:=\Comp Lm$.

Krull's Intersection Theorem implies that $L$ is $\m$-adically separated, so the natural map $L\to\comp L$ is injective.
Thus, the definition of $L'$ makes sense.
The module $\comp L$ is flat over $R$
by~\cite[Theorem 5.3.28]{enochs:rha}.
Also, the proof of~\cite[Proposition~6.7.6]{enochs:rha} shows that the inclusion $L\to\comp{L}$ is pure.
It follows that $L'$ is also flat over~$R$.

Claim: $L'$ is naturally an $R_{\p}$-module. 
To see this, first note that~\cite[Corollary~1.9(1)]{yekutieli:ccc} shows that the natural map
$L/\m L\to\comp L/\m\comp L$ is an isomorphism.
Right-exactness of $-\otimes_Rk$ applied to the natural sequence
\begin{equation}\label{eq151001a}
0\to L\xra{\nu^{\m}_L}\comp L\to L'\to 0
\end{equation}
implies that $\Otimes{L'}{k}=0$. 
The fact that $L'$ is flat implies that $\Tor i{L'}{k}=0$ for all $i\neq 0$,
and we conclude that $\m\notin\supp_R(L')$.
It follows from~\cite[Remark~2.9]{foxby:bcfm} that the minimal injective resolution of $L'$ over $R$
contains no summand of the form $E_R(k)$.
Thus, the minimal injective resolution of $L'$ over $R$ has the form
$$0\to E_R(R/\p)^{(\mu^0)}\xra{\partial_0}E_R(R/\p)^{(\mu^1)}\to\cdots.$$
Since each module $E_R(R/\p)^{(\mu^i)}$ is naturally an $R_{\p}$-module, the $R$-module homomorphism
$\partial_0$ is $R_{\p}$-linear. Since $L'$ is isomorphic to $\Ker(\partial_0)$, the claim follows.

We include here a second proof of the claim, as it sheds a different light on the module $L'$, which is somewhat mysterious to us. 
For this second proof, it suffices to show that $L'\cong\Ext 1{R_{\p}}L$, since this Ext-module inherits an $R_{\p}$-structure
from the first slot. (Note that this proof is intimately related to  results of~\cite{yekutieli:hct,yekutieli:sccmc}.)
Since $L$ is flat over $R$, there is an isomorphism
$\LL mL\simeq
\comp L$ in $\catd(R)$.
Thus, the exact sequence~\eqref{eq151001a} provides an exact triangle
\begin{equation}\label{eq151001b}
L\xra{\toLL mL}\LL mL\to L'\to
\end{equation}
in $\catd(R)$. 
Using the structure of $\spec(R)$ again, as in the proof of the claim in Example~\ref{ex150907a}, we have the next exact triangle 
$$\RG mR\xra{\fromRG mR}R\to R_{\p}\to$$
in $\catd(R)$. In the language of~\cite[Section~7]{yekutieli:hct}, this says that we have $\mathbf{R}\Gamma_{0/\m}(R)\simeq R_{\p}$.
The proof of~\cite[Lemma~7.2]{yekutieli:hct} exhibits an exact triangle of the following form.
$$\Rhom{\mathbf{R}\Gamma_{0/\m}(R)}{L}\to L\xra{\toLL mL}\LL mL\to $$
Rotating this triangle, we obtain the next one.
$$L\xra{\toLL mL}\LL mL\to \shift\Rhom{\mathbf{R}\Gamma_{0/\m}(R)}{L}\to$$
Combining this with the triangle in~\eqref{eq151001b}, we conclude that
$$L'\simeq \shift\Rhom{\mathbf{R}\Gamma_{0/\m}(R)}{L}\simeq\shift\Rhom{R_{\p}}L.$$
Applying $\HH_0$, we obtain the next isomorphism
$$L'\cong\HH_0(L')\cong\HH_0(\shift\Rhom{R_{\p}}L)\cong\Ext 1{R_{\p}}L.$$
This concludes the second proof of the claim.

We conclude the proof of the lemma. Since $L'$ is a flat $R$-module, the localization $L'_{\p}\cong L'$ is a flat $R_{\p}$-module;
the isomorphism follows from the above claim.
Finally, assume that $L$ is not finitely generated. To show that $L'\neq 0$, it suffices to show that
$L$ is not  complete. Since $L$ is free of infinite rank, consider a sequence $e_1,e_2,\ldots$ of distinct elements of a basis of $L$.
From our assumption on $\spec(R)$, the nilradical of $R$ is $\p$.
Let $y\in\m\ssm\p$, which is not nilpotent. 
It follows that the Cauchy sequence $\{\sum_{i=1}^ny^ie_i\}_{n=1}^\infty$ in $L$ does not converge in $L$, so $L$ is not complete, as desired.
\end{proof}

Similar to the previous example, the next one shows that $\Lambda^\fa$ does not respect the class of semi-flat complexes. 

\begin{ex}\label{ex150907az}
We continue with the set-up of Example~\ref{ex150907a}. 

Claim: the complex $\Lambda^\fm(G)$ is not semi-flat over $R$.
Since each module $G_i$ is free over $R$, Lemma~\ref{lem151001a} provides a
short exact sequence
$$0\to G\xra{\nu^\fm_G}\Lambda^\fm(G)\to G'\to 0$$
where each module $G'_i$ is a non-zero flat $R_{\p}$-module. Since $R_{\p}$ is Gorenstein and artinian, 
it follows that $G'_i$ is injective over $R_{\p}$, hence 
$$G_i\cong E_{R_{\p}}(\kappa(\p))^{(\mu_i)}\cong E_R(R/\p)^{(\mu_i)}\cong R_{\p}^{(\mu_i)}$$
for some set $\mu_i$. 

Since $G$ is semi-flat, and the modules $\Lambda^\fm(G)$ and $G'_i$ are flat over $R$ for all $i$, 
to establish the claim, it suffices to show that $G'$ is not semi-flat over $R$.
To accomplish this, we follow the lead of Chen and Iyengar~\cite{chen:sirccr}
by showing that $G'$ is exact and minimal.
(Recall that a complex $Z$ is \emph{minimal} if every homotopy equivalence $Z\to Z$ is an isomorphism.)
If $G'$ were semi-flat, this would imply $G'=0$, which we know to be false.

Note that each homology module $\HH_i(G)\cong R/xR$ is $\m$-adically complete, since $R$ is so.
Hence, from~\cite[Theorem~3]{yekutieli:sccmc} we conclude that the natural morphism
$\toLL mG\colon G\to\LL mG$ is an isomorphism in $\catd(R)$.
In other words, the chain map $\nu^\fm_G\colon G\to\Lambda^\fm(G)$ is a quasiisomorphism.
It follows that $G'$ is exact. 

Since $G'$ is a complex of injective $R$-modules, to show that $G'$ is minimal, it suffices to show that
the inclusion $\oplus_{i\in\bbz}\Ker(\partial^{G'}_i)\subseteq\oplus_{i\in\bbz}G'_i$ is an injective envelope over $R$;
see, e.g., \cite[Lemma 5.4.16]{christensen:dcmca}.
As every $R$-module has an injective envelope, we see from~\cite[Corollary~6.4.4]{enochs:rha} that
it suffices to show that each inclusion $\Ker(\partial^{G'}_i)\subseteq G'_i$ is an injective envelope over $R$,
i.e., over $R_\p$ as $G'$ is an $R_{\p}$-complex.

Because $(R_{\p},\p R_{\p},\kappa(\p))$ is a local ring, 
the inclusion $\soc_{R_{\p}}(E_{R_{\p}}(\kappa(\p))^{(\mu_i)})\subseteq E_{R_{\p}}(\kappa(\p))^{(\mu_i)}$ is an injective envelope. 
That is, the inclusion $\soc_{R_{\p}}(G'_i)\subseteq G'_i$ is an injective envelope.
On the other hand, the isomorphism $G'_i\cong R_{\p}^{(\mu_i)}$ works with the conditions
$\p R_{\p}=xR_{\p}\neq 0$ and $x^2=0$ to imply that $\soc_{R_{\p}}(G'_i)=xG'_i=(0:_{G_i}x)$.
Thus, we are reduced to showing that $\Ker(\partial^{G'}_i)=xG'_i$.

By construction, we have $\partial^G_j(G_j)\subseteq xG_{j-1}$ for all $j$,
so $\partial^G_j(xG_j)\subseteq x^2G_{j-1}=0$. In other words,
the composition 
$$G_j\xra{x}G_j\xra{\partial^G_j} G_{j-1}$$ 
is $0$.
Applying the functor $\Lambda^{\m}$, we see that the composition 
$$\Lambda^{\m}(G_j)\xra{x}\Lambda^{\m}(G_j)\xra{\Lambda^{\m}(\partial^G_j)} \Lambda^{\m}(G_{j-1})$$ 
is $0$,
that is, the composition 
$$\Lambda^{\m}(G)_j\xra{x}\Lambda^{\m}(G)_j\xra{\partial^{\Lambda^{\m}(G)}_j} \Lambda^{\m}(G)_{j-1}$$ 
is $0$.
Since $\partial^{G'}$ is induced by $\partial^{\Lambda^{\m}(G)}_j$,
it follows that the composition 
$$G'_j\xra{x}G'_j\xra{\partial^{G'}_j} G'_{j-1}$$ 
is $0$.
We conclude that $0=\partial^{G'}_j(xG'_j)=x\partial^{G'}_j(G'_j)$.
The first equality here implies that $\Ker(\partial^{G'}_i)\supseteq xG'_i$.
Using the second equality here, we see that
$\Ker(\partial^{G'}_i)=\partial^{G'}_{i+1}(G'_{i+1})\subseteq(0:_{G_i}x)=xG'_i$.
We conclude that $\Ker(\partial^{G'}_i)=xG'_i$. This establishes the claim and concludes the example.
\end{ex}

As with~\cite[Lemma~3.5.1]{lipman:llcd}, our next result shows that one can use flat resolutions to compute $\LLno a$.

\begin{prop}
\label{prop160202a}
Let $X\in\catd(R)$, and let $F$ be a complex of flat $R$-modules such that $F\simeq X$ in $\catd(R)$.
Then we have $\LL aX\simeq\Lambda^\fa(F)$ in $\catd(R)$.
\end{prop}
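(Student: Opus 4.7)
The plan is to reduce to the bounded-below case---where semi-flatness is automatic---via a brutal-truncation argument and two applications of the Milnor exact sequence. For each integer $n \geq 0$, define $F^{(n)}$ to be the brutal truncation of $F$ keeping degrees $\geq -n$: explicitly, $F^{(n)}_i = F_i$ for $i \geq -n$, $F^{(n)}_i = 0$ for $i < -n$, the differentials agree with those of $F$ in degrees $> -n$, and $\partial^{F^{(n)}}_{-n} := 0$. Each $F^{(n)}$ is a bounded-below complex of flat $R$-modules, hence semi-flat, so $\LL aF^{(n)} \simeq \Lambda^\fa(F^{(n)})$ by definition of the derived functor. Moreover, the natural projections $F^{(n)}\twoheadrightarrow F^{(n-1)}$ fit into degreewise exact sequences $0 \to K_n \to F^{(n)} \to F^{(n-1)} \to 0$, where $K_n$ is the complex with $F_{-n}$ placed in degree $-n$ (and zero differentials); these are short exact sequences of complexes of flat modules.

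The first key identification is $F \simeq \operatorname{holim}_n F^{(n)}$ in $\catd(R)$. For each fixed $j$, the homology $H_j(F^{(n)}) = H_j(F)$ as soon as $n > -j$, since the only homology altered by the truncation lies in the extremal degree $-n$. Consequently, for every $j$ the tower $\{H_j(F^{(n)})\}_n$ is eventually constant, giving $\lim_n H_j(F^{(n)}) = H_j(F)$ and $\lim^1_n H_j(F^{(n)}) = 0$. The Milnor exact sequence then yields $H_i(\operatorname{holim}_n F^{(n)}) \cong H_i(F)$ for every $i$, and the canonical map $F = \lim_n F^{(n)} \to \operatorname{holim}_n F^{(n)}$ is an isomorphism in $\catd(R)$. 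Applying $\LL a$ and using the Greenlees--May identification $\LL a \simeq \Rhom{\RG aR}{-}$ from Fact~\ref{fact130619b}, together with the fact that $\Rhom{\RG aR}{-}$ preserves homotopy limits (continuity of $\Rhom$ in its second argument), we obtain
\[
\LL aX \simeq \LL aF \simeq \operatorname{holim}_n \LL aF^{(n)} \simeq \operatorname{holim}_n \Lambda^\fa(F^{(n)}).
\]

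It remains to identify this last homotopy limit with $\Lambda^\fa(F)$. Since $\Lambda^\fa$ is exact on short exact sequences of flat $R$-modules (by Artin--Rees together with the Mittag-Leffler property of surjective towers of flats), applying $\Lambda^\fa$ degreewise to the sequences above yields short exact sequences $0 \to \Lambda^\fa(K_n) \to \Lambda^\fa(F^{(n)}) \to \Lambda^\fa(F^{(n-1)}) \to 0$. In particular, the transition maps $\Lambda^\fa(F^{(n)}) \twoheadrightarrow \Lambda^\fa(F^{(n-1)})$ are degreewise surjections of flat modules, and the ordinary inverse limit satisfies $\lim_n \Lambda^\fa(F^{(n)})_i = \Lambda^\fa(F_i)$ in each degree $i$, so $\lim_n \Lambda^\fa(F^{(n)}) = \Lambda^\fa(F)$ as complexes. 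Repeating the Milnor analysis---the homology tower $\{H_i(\Lambda^\fa(F^{(n)}))\}_n$ is eventually constant by the same degree-bookkeeping---gives $\operatorname{holim}_n \Lambda^\fa(F^{(n)}) \simeq \Lambda^\fa(F)$ in $\catd(R)$. Composing with the previous chain of isomorphisms yields $\LL aX \simeq \Lambda^\fa(F)$, as required.

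The main obstacle is the eventual-constancy bookkeeping needed for the two Milnor applications: one must verify that the homology of $F^{(n)}$ and of $\Lambda^\fa(F^{(n)})$ each stabilize in every fixed degree despite the differentials being modified at the bottom of each truncation. The remaining ingredients are formal: semi-flatness of bounded-below flat complexes, the Greenlees--May description of $\LL a$, exactness of $\Lambda^\fa$ on short exact sequences of flat modules, and continuity of $\Rhom$ in its second variable.
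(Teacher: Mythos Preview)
Your argument is correct, but the paper takes a shorter and more elementary route. The paper first proves a key claim: if $G$ is any \emph{exact} complex of flat $R$-modules, then $\Lambda^\fa(G)$ is exact. The proof exploits the finite cohomological amplitude of $\LLno a$: since $\RG aR$ admits the telescope complex as a projective resolution of length $n$ (the number of generators of $\fa$), one has $\HH_{n+1}(\LL aM)=0$ for every module $M$; applied to the syzygy $M=\im(\partial^G_{i-n-1})$, whose flat resolution is the truncation of $G$, this gives $\HH_i(\Lambda^\fa(G))=0$ for each $i$. With this claim in hand, one simply chooses a semi-projective resolution $P\xra{\simeq}X$, lifts to a quasiisomorphism $\phi\colon P\to F$, and observes that $\cone(\phi)$ is an exact complex of flat modules, so $\Lambda^\fa(\cone(\phi))\simeq\cone(\Lambda^\fa(\phi))$ is exact and $\Lambda^\fa(\phi)$ is a quasiisomorphism.

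Your approach via brutal truncations, homotopy limits, and two Milnor-sequence computations is a different and valid strategy. It avoids isolating the ``$\Lambda^\fa$ kills acyclic flat complexes'' lemma and instead reduces directly to the bounded-below case, relying on continuity of $\Rhom{\RG aR}{-}$ in the second variable. The paper's argument is more economical and yields that auxiliary lemma as a byproduct; yours is a general-purpose template that would adapt to other functors computable via $\Rhom{C}{-}$ for a fixed $C$. One small remark: your appeal to Artin--Rees for the exactness of $\Lambda^\fa$ on short exact sequences of flat modules is unnecessary---flatness of the cokernel already gives level-wise exactness after tensoring with $R/\fa^t$, and surjectivity of the towers handles the limit---though this does not affect the validity of your argument.
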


\begin{proof}
Claim: given any exact complex $G$ of flat $R$-modules, one has $\Lambda^\fa(G)\simeq 0$. 
To establish the claim, let $i\in\bbz$ be given;  we need to show that $\HH_i(\Lambda^\fa(G))=0$. 
Assume that the ideal $\fa$ is generated by a sequence of length $n$. 
Then the ``telescope complex'' $T$ is a projective resolution of $\RG aR$ concentrated in degrees $0,\ldots,-n$; see~\cite{yekutieli:hct}. 
From this, we conclude that
$$\HH_{n+1}(\LL aM)\cong\HH_{n+1}(\Rhom{RG aR}M)=0$$
for each $R$-module $M$.

Set $M:=\im(\partial^G_{i-n-1})$, so we have a flat resolution
$$\cdots\xra{\partial^G_{i+1}}G_i\xra{\partial^G_{i}}\cdots\xra{\partial^G_{i-n}}G_{i-n-1}\to M\to 0.$$
From this and the previous paragraph, we conclude that
$$0=\HH_{n+1}(\LL aM)\cong\HH_i(\Lambda^\fa(G)).$$
Since $i$ is arbitrary, this establishes the claim.

Now we prove our proposition. 
Let $P\xra\simeq X$ be a semi-projective resolution. The isomorphism $X\simeq F$ in $\catd(R)$ provides a quasiisomorphism
$\phi\colon P\xra\simeq F$. The mapping cone $G:=\cone(\phi)$ is an exact complex of flat $R$-modules, so the above claim
implies that 
$$0\simeq\Lambda^\fa(G)=\Lambda^\fa(\cone(\phi))\simeq\cone(\Lambda^\fa(\phi)).$$
We conclude that $\Lambda^\fa(\phi)\colon \Lambda^\fa(P)\to\Lambda^\fa(F)$ is a quasiisomorphism,
so $\Lambda^\fa(F)\simeq \Lambda^\fa(P)\simeq\LL aX$, as desired.
\end{proof}

\section{Extended Greenlees-May Duality and MGM Equivalence}\label{sec150626a}

In this section, we extend previous isomorphisms to cover the functors $\LLLno a$ and $\RRGno a$,
beginning with extended versions of parts of Fact~\ref{fact150626a}.

\begin{lem}\label{lem150805a}
The  natural transformations 
$$\LLLno a\circ\RGno a\xra[\simeq]{\LLLno a\circ\fromRGno a}\LLLno a\circ\id\xra[\simeq]{\LLLno a\circ\toLLno a}\LLLno a\circ\LLno a$$
are isomorphisms of functors $\catd(R)\to\catd(\Comp Ra)$.
\end{lem}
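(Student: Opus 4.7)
The plan is to reduce the statement to the corresponding facts about $\LLno a$ by means of the forgetful functor $Q\colon\catd(\Comp Ra)\to\catd(R)$. The key observation is that a morphism $f$ in $\catd(\Comp Ra)$ is an isomorphism if and only if $Q(f)$ is an isomorphism in $\catd(R)$, since both conditions amount to the underlying chain map being a quasi-isomorphism (homology being computed on the underlying abelian groups is indifferent to the base ring).

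First, I would recall from the discussion preceding Fact~\ref{fact130619b} the natural isomorphism $Q\circ\LLLno a\simeq\LLno a$ of functors $\catd(R)\to\catd(R)$. Using naturality, this isomorphism identifies the whiskered transformations $Q(\LLLno a\circ\fromRGno a)$ and $Q(\LLLno a\circ\toLLno a)$ with $\LLno a\circ\fromRGno a$ and $\LLno a\circ\toLLno a$, respectively.

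Second, I would invoke the known results. By Fact~\ref{fact150626a}, the natural transformation
\[\LLno a\circ\fromRGno a\colon\LLno a\circ\RGno a\to\LLno a\circ\id\]
is an isomorphism, and by Fact~\ref{lem150907a}, the natural transformation
\[\LLno a\circ\toLLno a\colon\LLno a\circ\id\to\LLno a\circ\LLno a\]
is an isomorphism.

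Finally, combining these two paragraphs: for each $X\in\catd(R)$, the morphisms $\LLL a(\fromRG aX)$ and $\LLL a(\toLL aX)$ in $\catd(\Comp Ra)$ become isomorphisms in $\catd(R)$ after applying $Q$, so they are already isomorphisms in $\catd(\Comp Ra)$ by the reflectivity observation above. I do not expect any genuinely difficult step; the only point requiring care is the naturality of the identification $Q\circ\LLLno a\simeq\LLno a$ relative to the natural transformations $\fromRGno a$ and $\toLLno a$, so that whiskering by $\LLLno a$ over $\Comp Ra$ pulls back, via $Q$, to whiskering by $\LLno a$ over $R$.
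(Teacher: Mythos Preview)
Your proposal is correct and is essentially the same argument as the paper's, just phrased categorically rather than at the chain level. The paper picks semi-projective resolutions $Q\xra\simeq\RG aX$ and $P\xra\simeq X$, lifts $\fromRG aX$ to a chain map $\phi\colon Q\to P$, and observes that $\Lambda^\fa(\phi)$ simultaneously represents $\LLno a(\fromRG aX)$ in $\catd(R)$ and $\LLLno a(\fromRG aX)$ in $\catd(\Comp Ra)$, so the latter is an isomorphism because the former is; this is exactly your ``$Q$ reflects isomorphisms and $Q\circ\LLLno a\simeq\LLno a$ naturally'' packaged concretely, and your care about naturality corresponds precisely to the paper's use of the same chain map in both categories.
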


\begin{proof}
For the first isomorphism, let $X\in\catd(R)$ be given, and choose semi-projective resolutions $P\xra\simeq X$ and $Q\xra\simeq\RG aX$. 
Let $\phi\colon Q\to P$ be a chain map representing the natural morphism $\RG aX\xra{\fromRG aX} X$.
Then the induced morphism $\LL a{\RG aX}\to\LL aX$ is an isomorphism in $\catd(R)$ by Fact~\ref{fact150626a},
and it is represented by $\Lambda^{\fa}(\phi)\colon\Lambda^{\fa}(Q)\to\Lambda^{\fa}(P)$.
It follows that $\Lambda^{\fa}(\phi)$ is a quasi-isomorphism. 
Since $\Lambda^{\fa}(\phi)$ also represents the natural morphism $\LLL a{\RG aX}\to\LLL aX$ in $\catd(\Comp Ra)$,
this morphism is also an isomorphism, as desired.

For $\LLLno a\circ\toLLno a$, argue similarly with Fact~\ref{lem150907a} in place of Fact~\ref{fact150626a}.
\end{proof}

\begin{lem}\label{lem150805d}
There is a natural isomorphism 
$\LLLno a\circ\RGno a\simeq\LLano a\circ\RRGno a$ of functors $\catd(R)\to\catd(\Comp Ra)$
\end{lem}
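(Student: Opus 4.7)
My plan is to represent both sides of the claimed isomorphism by a single explicit complex, $\Lambda^{\fa}(P)$, where $P \xra{\simeq} \RG aX$ is a semi-projective resolution over $R$. The key enabling result is Proposition~\ref{prop160202a} (and its straightforward $\Comp Ra$-analog), which allows derived completions to be computed from arbitrary flat resolutions rather than only semi-flat ones.

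Given $X \in \catd(R)$, I fix a semi-projective resolution $\phi \colon P \xra{\simeq} \RG aX$ in $\catd(R)$. By construction of $\LLLno a$, the complex $\Lambda^{\fa}(P)$ with its canonical $\Comp Ra$-structure represents $\LLL a{\RG aX}$ in $\catd(\Comp Ra)$, which disposes of the left-hand side.

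For the right-hand side I base change along $R \to \Comp Ra$. The complex $\Otimes{\Comp Ra}{P}$ consists of flat $\Comp Ra$-modules because each $P_i$ is $R$-projective. The flatness of $\Comp Ra$ over $R$ promotes $\phi$ to a quasi-isomorphism $\Otimes{\Comp Ra}{P} \xra{\simeq} \Otimes{\Comp Ra}{\RG aX}$; combined with the natural isomorphism $\Otimes{\Comp Ra}{\RG aX} \simeq \RRG aX$ in $\catd(\Comp Ra)$ from Fact~\ref{fact130619b}, this exhibits $\Otimes{\Comp Ra}{P}$ as a flat resolution of $\RRG aX$ over $\Comp Ra$. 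Applying Proposition~\ref{prop160202a} over $\Comp Ra$ then yields
$$\LLa a{\RRG aX} \simeq \Lambda^{\fa\Comp Ra}(\Otimes{\Comp Ra}{P}).$$
A direct module-level computation using the standard identification $\Comp Ra/(\fa\Comp Ra)^n \cong R/\fa^n$ together with the flatness of each $P_i$ identifies $\Lambda^{\fa\Comp Ra}(\Otimes{\Comp Ra}{P_i})$ with $\Lambda^{\fa}(P_i)$ naturally as $\Comp Ra$-modules, so $\Lambda^{\fa\Comp Ra}(\Otimes{\Comp Ra}{P}) = \Lambda^{\fa}(P)$ in $\catd(\Comp Ra)$. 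Chaining the isomorphisms produces $\LLa a{\RRG aX} \simeq \Lambda^{\fa}(P) \simeq \LLL a{\RG aX}$.

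The main obstacle I anticipate is verifying naturality in $X$: since semi-projective resolutions are not canonical, the comparison isomorphism must be shown independent of the choice of $P$ and functorial along morphisms in $\catd(R)$. This should follow routinely from the standard uniqueness up to homotopy of semi-projective resolutions, together with the functoriality of base change and of the completion functor $\Lambda^{\fa}$.
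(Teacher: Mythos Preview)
Your argument is correct and rests on the same underlying identification as the paper's: both sides are represented by the $\fa$-adic completion of a single complex of flat modules. However, the paper runs the comparison in the opposite direction, which is noticeably cleaner. Instead of starting with a semi-projective resolution $P\xra{\simeq}\RG aX$ over $R$ and base-changing to $\Comp Ra$, the paper chooses a semi-flat resolution $F\xra{\simeq}\RRG aX$ over $\Comp Ra$; flatness of $\Comp Ra$ over $R$ makes $F$ semi-flat over $R$ as well, so $F$ simultaneously resolves $\RG aX$ over $R$, and one gets directly
\[
\LLL a{\RG aX}\;\simeq\;\Lambda^{\fa}(F)\;=\;\Lambda^{\fa\Comp Ra}(F)\;\simeq\;\LLa a{\RRG aX}
\]
in $\catd(\Comp Ra)$. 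This avoids both the appeal to Proposition~\ref{prop160202a} and the module-level computation identifying $\Lambda^{\fa\Comp Ra}(\Otimes{\Comp Ra}{P_i})$ with $\Lambda^{\fa}(P_i)$. Incidentally, your detour through Proposition~\ref{prop160202a} is unnecessary even in your own setup: since $P$ is semi-flat over $R$, the base-change $\Otimes{\Comp Ra}P$ is already semi-flat over $\Comp Ra$ (because $(\Otimes{\Comp Ra}P)\otimes_{\Comp Ra}-\cong P\otimes_R-$), so you could compute $\LLano a$ from it directly.
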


\begin{proof}
Let $X\in\catd(R)$ be given, and choose a semi-flat resolution $F\xra\simeq \RRG aX$ over $\Comp Ra$.
Since $\Comp Ra$ is flat over $R$, the complex $F$ is also semi-flat  over $R$,
so it is a semi-flat resolution of $Q(\RRG aX)\simeq\RG aX$ over $R$.
This explains the isomorphisms in $\catd(\Comp Ra)$ in the next display
$$\LLL a{\RG aX}\simeq
\Lambda^{\fa}(F)=\Lambda^{\fa\Comp Ra}(F)\simeq\LLa a{\RRG aX}
$$
The equality comes from the fact that $F$ is an $\Comp Ra$-complex.
\end{proof}

\begin{thm}\label{lem150805e}
The  natural transformation 
$$\id\circ\LLLno a\xra[\simeq]{\vartheta^{\fa\Comp Ra}\circ\LLLno a}
\LLano a\circ\LLLno a$$
is an isomorphism of functors $\catd(R)\to\catd(\Comp Ra)$.
In other words, the essential image of $\LLLno a$ in $\catd(\Comp Ra)$ is contained in $\catd(\Comp Ra)_{\text{$\fa\Comp Ra$}-comp}$.
\end{thm}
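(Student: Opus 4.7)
The plan is to fix $X\in\catd(R)$ and show that the component
$$\vartheta^{\fa\Comp Ra}_{\LLL aX}\colon\LLL aX\longrightarrow\LLa a{\LLL aX}$$
is an isomorphism in $\catd(\Comp Ra)$; naturality in $X$ will be automatic from the naturality of $\vartheta^{\fa\Comp Ra}$ combined with the functoriality of $\LLLno a$. The key observation is that, by the $\Comp Ra$-version of Fact~\ref{fact150626a} (applied to the commutative noetherian ring $\Comp Ra$ with proper ideal $\fa\Comp Ra$), the subcategory $\catdaacomp$ of $\catd(\Comp Ra)$ coincides with the essential image of the endofunctor $\LLano a$. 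So it suffices to exhibit $\LLL aX$ as an object of this essential image.

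To do this I splice together the two preceding lemmas. Lemma~\ref{lem150805a} yields a natural isomorphism $\LLL a{\RG aX}\xra\simeq\LLL aX$ in $\catd(\Comp Ra)$, and Lemma~\ref{lem150805d} yields a natural isomorphism $\LLL a{\RG aX}\simeq\LLa a{\RRG aX}$. Composing these, I obtain a natural isomorphism $\LLL aX\simeq\LLa a{\RRG aX}$ in $\catd(\Comp Ra)$, which realizes $\LLL aX$ as $\LLano a$ applied to the $\Comp Ra$-complex $\RRG aX$. Hence $\LLL aX$ lies in the essential image of $\LLano a$, so $\LLL aX\in\catdaacomp$ and the natural morphism $\vartheta^{\fa\Comp Ra}_{\LLL aX}$ is an isomorphism, as desired.

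The only point that genuinely needs care is the invocation of Fact~\ref{fact150626a} at the level of $\Comp Ra$ rather than $R$. This is a direct instantiation: $\Comp Ra$ is commutative noetherian, $\fa\Comp Ra$ is a proper ideal of $\Comp Ra$, and the cited theorems of Alonso Tarrío--Jeremías López--Lipman and Porta--Shaul--Yekutieli are formulated for arbitrary commutative noetherian rings equipped with a proper ideal, so they transfer unchanged. Once this is granted, nothing else in the argument offers resistance.
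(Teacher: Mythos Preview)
Your argument is correct and matches the paper's proof essentially line for line: both reduce via Fact~\ref{fact150626a} (applied over $\Comp Ra$) to exhibiting $\LLL aX$ in the essential image of $\LLano a$, and both do so by combining Lemmas~\ref{lem150805a} and~\ref{lem150805d} to obtain $\LLL aX\simeq\LLa a{\RRG aX}$. Your additional remark justifying the application of Fact~\ref{fact150626a} over $\Comp Ra$ is a helpful clarification but not a deviation.
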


\begin{proof}
Let $X\in\catd(R)$ be given.
According to Fact~\ref{fact150626a}, it suffices to show that $\LLL aX$ is of the form $\LLa aY$ for some $Y\in\catd(\Comp Ra)$.
To this end, the next isomorphisms in $\catd(\Comp Ra)$, from Lemmas~\ref{lem150805a} and~\ref{lem150805d}
$$\LLL aX\simeq\LLL a{\RG aX}\simeq \LLa a{\RRG aX}$$
show that the complex $Y=\RRG aX$ satisfies this condition.
\end{proof}

The next few results are proved like the preceding ones, using semi-injective resolutions for the first two.

\begin{lem}\label{lem150805b}
The  natural transformations 
$$\RRGno a\circ\RGno a\xra[\simeq]{\RRGno a\circ\fromRGno a}\RRGno a\circ\id\xra[\simeq]{\RRGno a\circ\toLLno a}\RRGno a\circ\LLno a$$
are isomorphisms of functors $\catd(R)\to\catd(\Comp Ra)$.
\end{lem}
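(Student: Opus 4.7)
The plan is to mirror the proof of Lemma~\ref{lem150805a}, substituting semi-injective resolutions for semi-projective ones and the $\fa$-torsion functor $\Gamma_\fa$ for the completion functor $\Lambda^\fa$. The crucial observation is that if $I\xra{\simeq} X$ is a semi-injective $R$-resolution, then $\Gamma_\fa(I)$ is an $\fa$-torsion $R$-complex, hence naturally an $\Comp Ra$-complex, and it simultaneously represents $\RG aX$ in $\catd(R)$ and $\RRG aX$ in $\catd(\Comp Ra)$. The same chain map therefore represents both the $R$-version and the $\Comp Ra$-version of the natural morphisms in question.

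For the first isomorphism $\RRGno a\circ\fromRGno a$, take $X\in\catd(R)$ and choose semi-injective resolutions $X\xra{\simeq} I$ and $\RG aX\xra{\simeq} J$. Let $\phi\colon J\to I$ be a chain map representing $\fromRG aX\colon \RG aX\to X$ (available because $I$ is semi-injective). The induced chain map $\Gamma_\fa(\phi)\colon\Gamma_\fa(J)\to\Gamma_\fa(I)$ represents the morphism $\RG a{\RG aX}\to\RG aX$ in $\catd(R)$, which is an isomorphism by Fact~\ref{lem150907a}. Hence $\Gamma_\fa(\phi)$ is a quasi-isomorphism. Viewing $\Gamma_\fa(J)$ and $\Gamma_\fa(I)$ as $\Comp Ra$-complexes, the same chain map represents the morphism $\RRG a{\RG aX}\to\RRG aX$ in $\catd(\Comp Ra)$, which is therefore an isomorphism there as well.

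For the second isomorphism $\RRGno a\circ\toLLno a$, argue identically using a chain map $\psi\colon I\to I'$ between semi-injective resolutions of $X$ and $\LL aX$ respectively, invoking the isomorphism $\RGno a\circ\id\xra{\simeq}\RGno a\circ\LLno a$ from Fact~\ref{fact150626a} in place of Fact~\ref{lem150907a}. No genuine obstacle is anticipated: the only point requiring comment is that a chain map which is simultaneously $R$-linear and $\Comp Ra$-linear is a quasi-isomorphism in one category if and only if it is so in the other, since being a quasi-isomorphism is a property of the underlying morphism of complexes of abelian groups.
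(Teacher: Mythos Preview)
Your proposal is correct and follows exactly the approach the paper indicates: mirror the proof of Lemma~\ref{lem150805a} with semi-injective resolutions in place of semi-projective ones and $\Gamma_\fa$ in place of $\Lambda^\fa$, invoking Fact~\ref{lem150907a} for the first isomorphism and Fact~\ref{fact150626a} for the second. The only cosmetic slip is the direction of the arrow in your opening sentence (a semi-injective resolution is $X\xra{\simeq}I$, not $I\xra{\simeq}X$), which you write correctly thereafter.
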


\begin{lem}\label{lem150805c}
There is a natural isomorphism
$\RRGno a\circ\LLno a\simeq\RGano a\circ\LLLno a$ of functors $\catd(R)\to\catd(\Comp Ra)$.
\end{lem}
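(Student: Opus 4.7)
The plan is to mimic the strategy of Lemma~\ref{lem150805d}, but dualize: replace semi-flat resolutions with injective ones, and use the invocation of \cite[Lemma~3.5.1]{lipman:llcd} highlighted in Section~\ref{sec150922a}. The reason we cannot copy the proof of Lemma~\ref{lem150805d} verbatim is that the fact ``$\Comp Ra$ is flat over $R$'' does not transfer \emph{semi-injective} $\Comp Ra$-resolutions to semi-injective $R$-resolutions directly, although it does transfer injective modules.

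Given $X\in\catd(R)$, choose a semi-injective resolution $\LLL aX\xra\simeq I$ over $\Comp Ra$. First I would check that the underlying $R$-complex $Q(I)$ is a complex of injective $R$-modules quasi-isomorphic to $\LL aX$: each $I_i$ is injective over $\Comp Ra$, and flatness of $\Comp Ra$ over $R$ combined with the adjunction $\Hom[R]{-}{I_i}\cong\Hom[\Comp Ra]{\Otimes{\Comp Ra}{-}}{I_i}$ shows $\Hom[R]{-}{I_i}$ is exact, so $I_i$ is $R$-injective; the quasi-isomorphism $Q(I)\simeq \LL aX$ is immediate from $Q\circ\LLLno a\simeq\LLno a$.

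Next, invoke \cite[Lemma~3.5.1]{lipman:llcd} (the tool emphasized at the start of Section~\ref{sec150922a}) to conclude that $\Gamma_{\fa}(Q(I))$ computes $\RG a{\LL aX}$ in $\catd(R)$, and hence represents $\RRG a{\LL aX}$ in $\catd(\Comp Ra)$ via the natural $\Comp Ra$-structure on $\fa$-torsion. On the other hand, since $I$ is a complex of $\Comp Ra$-modules, one has the equality of functors $\Gamma_{\fa}(I)=\Gamma_{\fa\Comp Ra}(I)$ (elements annihilated by a power of $\fa$ are precisely those annihilated by a power of $\fa\Comp Ra$). Because $I$ is a semi-injective $\Comp Ra$-resolution of $\LLL aX$, the right-hand side computes $\RGa a{\LLL aX}$ in $\catd(\Comp Ra)$.

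Chaining these identifications gives natural isomorphisms in $\catd(\Comp Ra)$:
\begin{equation*}
\RRG a{\LL aX}\simeq \Gamma_{\fa}(I)=\Gamma_{\fa\Comp Ra}(I)\simeq \RGa a{\LLL aX}.
\end{equation*}
The naturality in $X$ follows since any morphism $X\to Y$ in $\catd(R)$ can be represented through compatibly chosen semi-injective resolutions of $\LLL aX$ and $\LLL aY$ over $\Comp Ra$. The main obstacle I expect is the verification that $Q(I)$ is good enough as an $R$-resolution to compute $\RGno a$; this is precisely the kind of subtlety illustrated in Example~\ref{ex150907a}, and the rescue comes from \cite[Lemma~3.5.1]{lipman:llcd}, which requires only a complex of injective $R$-modules rather than a genuine semi-injective $R$-complex.
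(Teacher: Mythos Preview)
Your proof is correct, and it follows the same skeleton as the paper's intended argument (the direct dual of Lemma~\ref{lem150805d}, as the text indicates). However, the obstacle you anticipate does not actually arise: a semi-injective $\Comp Ra$-complex $I$ \emph{is} semi-injective over $R$. The adjunction $\Hom[R]{-}{I}\cong\Hom[\Comp Ra]{\Otimes{\Comp Ra}{-}}{I}$ that you invoke for modules holds verbatim for complexes; since $\Otimes{\Comp Ra}{-}$ preserves quasi-isomorphisms (flatness) and $\Hom[\Comp Ra]{-}{I}$ preserves them (semi-injectivity over $\Comp Ra$), so does the composite $\Hom[R]{-}{I}$. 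Thus $Q(I)$ is already a semi-injective $R$-resolution of $\LL aX$, and the chain
\[
\RRG a{\LL aX}\simeq\Gamma_{\fa}(Q(I))=\Gamma_{\fa\Comp Ra}(I)\simeq\RGa a{\LLL aX}
\]
holds by definition of $\RRGno a$, with no appeal to \cite[Lemma~3.5.1]{lipman:llcd} needed. Your detour through Lipman's lemma is a valid workaround but solves a non-problem; if you retain it, you should also make explicit that the $R$-linear quasi-isomorphism produced by Lipman's lemma is automatically $\Comp Ra$-linear (any $R$-map between $\fa$-torsion complexes is $\Comp Ra$-linear), since that is what lets you upgrade the isomorphism from $\catd(R)$ to $\catd(\Comp Ra)$.
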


\begin{thm}\label{lem150805f}
The  natural transformation 
$$
\RGano a\circ\RRGno a
\xra[\simeq]{\varepsilon_{\fa\Comp Ra}\circ\RRGno a}
\id\circ\RRGno a
$$
is an isomorphism of functors $\catd(R)\to\catd(\Comp Ra)$.
In other words, the essential image of $\RRGno a$ in $\catd(\Comp Ra)$ is contained in $\catdaator$.
\end{thm}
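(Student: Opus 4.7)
The plan is to mirror the proof of Theorem~\ref{lem150805e} step by step, with the right derived torsion functor $\RGano a$ replacing the left derived completion functor $\LLano a$ throughout. The key input is the analogue, applied internally over the completed ring $\Comp Ra$ with respect to the ideal $\fa\Comp Ra$, of the second row of Fact~\ref{fact150626a}: there one has the natural isomorphism
$$\RGano a\circ\RGano a\xra[\simeq]{\varepsilon_{\fa\Comp Ra}\circ\RGano a}\id\circ\RGano a$$
in $\catd(\Comp Ra)$, which identifies the essential image of $\RGano a$ with the subcategory $\catdaator$. In particular, as soon as one produces an isomorphism $\RRG aX\simeq\RGa aY$ for some $Y\in\catd(\Comp Ra)$, naturality of $\varepsilon_{\fa\Comp Ra}$ together with this isomorphism forces the specific morphism $\varepsilon_{\fa\Comp Ra}^{\RRG aX}\colon\RGa a{\RRG aX}\to\RRG aX$ itself to be an isomorphism, which is exactly the content of the theorem.

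Given $X\in\catd(R)$, I would manufacture the required $Y$ by chaining the two preceding lemmas: Lemma~\ref{lem150805b} supplies a natural isomorphism $\RRG aX\simeq\RRG a{\LL aX}$ in $\catd(\Comp Ra)$, and Lemma~\ref{lem150805c} then rewrites this as
$$\RRG aX\simeq\RRG a{\LL aX}\simeq\RGa a{\LLL aX},$$
so setting $Y:=\LLL aX$ completes the construction.

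There is no substantial obstacle once the preceding lemmas are in hand; the proof is structurally identical to that of Theorem~\ref{lem150805e}, merely swapping the pair $(\LLno a,\LLano a)$ for $(\RGno a,\RGano a)$ and invoking Lemmas~\ref{lem150805b} and~\ref{lem150805c} in place of Lemmas~\ref{lem150805a} and~\ref{lem150805d}. The one point worth emphasizing is that the conclusion concerns the named natural transformation rather than an arbitrary isomorphism; this comes for free because the essential image characterization of $\catdaator$ is built from the natural maps $\varepsilon_{\fa\Comp Ra}^{(-)}$ themselves, and naturality transports the isomorphism property across the identification $\RRG aX\simeq\RGa a{\LLL aX}$.
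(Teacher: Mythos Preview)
Your proposal is correct and is exactly the argument the paper intends: it explicitly states that Theorem~\ref{lem150805f} is proved like the preceding results, meaning one mirrors the proof of Theorem~\ref{lem150805e} using Lemmas~\ref{lem150805b} and~\ref{lem150805c} in place of Lemmas~\ref{lem150805a} and~\ref{lem150805d}, arriving at $\RRG aX\simeq\RGa a{\LLL aX}$ and then invoking the essential-image characterization from Fact~\ref{fact150626a} over $\Comp Ra$. Your added remark that naturality of $\varepsilon_{\fa\Comp Ra}$ is what guarantees the \emph{specific} morphism is an isomorphism is a welcome point of care that the paper leaves implicit.
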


\begin{disc}\label{disc160213a}
After we announced the results of this paper, we learned from Liran Shaul that
he has obtained some of the results of this section independently and in more generality.
For instance, the isomorphism $\LLLno a \simeq \LLano a\circ\RRGno a$ from Lemmas~\ref{lem150805a} and~\ref{lem150805d}
is~\cite[Theorem~1.7]{shaul:ard} in a non-noetherian setting. 
\end{disc}

Here is a version of Greenlees-May Duality~\ref{fact150626a} for our extended functors.
It is Theorem~\ref{thm151129a}\eqref{thm151129a1} from the introduction.

\begin{thm}\label{thm151002a}
Given $X,Y\in\catd(R)$,
there are natural isomorphisms in $\catd(\Comp Ra)$:
\begin{align*}
\Rhom[\Comp Ra]{\LLL aX}{\LLL aY}
&\simeq
\Rhom[\Comp Ra]{\RRG aX}{\RRG aY}\\
&\simeq\Rhom[\Comp Ra]{\RRG aX}{\LLL aY}.
\end{align*}
\end{thm}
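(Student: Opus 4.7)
The plan is to reduce the claim to the classical Greenlees--May duality (Fact~\ref{fact150626a}) applied in the derived category $\catd(\Comp Ra)$ with respect to the ideal $\fa\Comp Ra$. The classical statement, applied there to objects $A,B\in\catd(\Comp Ra)$, gives natural isomorphisms
\begin{align*}
\Rhom[\Comp Ra]{\LLa a{A}}{\LLa a{B}}
\simeq\Rhom[\Comp Ra]{\RGa a{A}}{\RGa a{B}}
\simeq\Rhom[\Comp Ra]{\RGa a{A}}{\LLa a{B}}.
\end{align*}
So the only thing to verify is that, for the specific choice $A:=\RRG aX$ and $B:=\LLL aY$, the four complexes $\RGa a{A}$, $\LLa a{A}$, $\RGa a{B}$, $\LLa a{B}$ are canonically isomorphic, respectively, to $\RRG aX$, $\LLL aX$, $\RRG aY$, and $\LLL aY$.

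To carry out that bookkeeping I would call on the preceding results in sequence. Theorem~\ref{lem150805f} gives $\RGa a{\RRG aX}\simeq\RRG aX$ directly, and Theorem~\ref{lem150805e} gives $\LLa a{\LLL aY}\simeq\LLL aY$. For $\LLa a{\RRG aX}$, I would use Lemma~\ref{lem150805d} to rewrite $\LLa a{\RRG aX}\simeq\LLL a{\RG aX}$, and then Lemma~\ref{lem150805a} to identify this with $\LLL aX$. Dually, for $\RGa a{\LLL aY}$, Lemma~\ref{lem150805c} gives $\RGa a{\LLL aY}\simeq\RRG a{\LL aY}$, and Lemma~\ref{lem150805b} collapses this to $\RRG aY$.

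Substituting these four identifications into the three-term Greenlees--May chain over $\Comp Ra$ yields precisely the three Hom-complexes in the statement, and naturality of all the intermediate isomorphisms gives naturality of the composite. The ``main obstacle'' is really just the discipline of citing the right preceding result at each of the four substitutions; there is no genuine computation, because every idempotency, completeness, and torsion property needed has already been built up in Section~\ref{sec150626a}. One small sanity check worth doing explicitly is that $\fa\Comp Ra$ is again a proper ideal of a commutative noetherian ring, so that Fact~\ref{fact150626a} really does apply in $\catd(\Comp Ra)$; this is immediate from the standing hypothesis that $\fa\subsetneq R$.
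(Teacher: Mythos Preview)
Your proof is correct and follows essentially the same approach as the paper: reduce to classical Greenlees--May duality over $\Comp Ra$ and then identify the four functors $\RGano a$, $\LLano a$ applied to the extended objects using Lemmas~\ref{lem150805a}--\ref{lem150805d} and Theorems~\ref{lem150805e}--\ref{lem150805f}. Your choice of the mixed pair $(A,B)=(\RRG aX,\LLL aY)$ is slightly more economical than the paper's, which takes $(A,B)=(\LLL aX,\LLL aY)$ and therefore needs two separate chains to reach all three Hom-complexes; your single application of Fact~\ref{fact150626a} produces all three at once.
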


\begin{proof}
The first isomorphism follows from the next sequence
\begin{align*}
\Rhom[\Comp Ra]{\LLL aX}{\LLL aY}
&\simeq\Rhom[\Comp Ra]{\LLa a{\LLL aX}}{\LLa a{\LLL aY}}
\\
&\simeq\Rhom[\Comp Ra]{\RGa a{\LLL aX}}{\RGa a{\LLL aY}}
\\
&\simeq\Rhom[\Comp Ra]{\RRG a{\LL aX}}{\RRG a{\LL aY}}
\\
&\simeq\Rhom[\Comp Ra]{\RRG aX}{\RRG aY}
\end{align*}
wherein the isomorphisms are from Theorem~\ref{lem150805e}, 
Greenlees-May duality~\ref{fact150626a}, and
Lemma~\ref{lem150805c}, and Lemma~\ref{lem150805b}, respectively.

The second isomorphism follows  from the next sequence
\begin{align*}
\Rhom[\Comp Ra]{\RRG aX}{\RRG aY}
&\simeq\Rhom[\Comp Ra]{\RRG a{\LL aX}}{\RRG a{\LL aY}}
\\
&\simeq\Rhom[\Comp Ra]{\RGa a{\LLL aX}}{\RGa a{\LLL aY}}
\\
&\simeq\Rhom[\Comp Ra]{\RGa a{\LLL aX}}{\LLL aY}
\\
&\simeq\Rhom[\Comp Ra]{\RRG a{\LL aX}}{\LLL aY}
\\
&\simeq\Rhom[\Comp Ra]{\RRG aX}{\LLL aY}
\end{align*}
which is justified similarly.
\end{proof}

\begin{disc}\label{disc151004a}
It is reasonable to ask whether versions of other isomorphisms from Greenlees-May duality~\ref{fact150626a} hold in our set-up. 
For instance, given $X,Y\in\catd(R)$, we have the natural isomorphism
$$\Rhom{\RG aX}{\RG aY}\xra\simeq\Rhom{\RG aX}{Y}.$$
In our set-up, the naive question would ask whether we have
$$\Rhom[\Comp Ra]{\RRG aX}{\RRG aY}\stackrel{\text{?}}\simeq\Rhom[\Comp Ra]{\RRG aX}{Y}$$
in $\catd(\Comp Ra)$. However, this doesn't make sense as $Y$ does not come equipped with an $\Comp Ra$-structure.
On the other hand, see Propositions~\ref{prop151203a}--\ref{cor151004a} and their proofs for some isomorphisms involving $\Rhom[\Comp Ra]{\RRG aX}{Y}$ when $Y\in\catd(\Comp Ra)$.

Another reasonable question to ask would be whether the isomorphism
$$\Rhom[\Comp Ra]{\LLL aX}{\LLL aY}\stackrel{\text{?}}\simeq\Rhom[\Comp Ra]{\LLL aX}{\RRG aY}$$
holds. One sees readily from the example $X=R=Y$ that this fails in general because, one the one hand, we have
$$\Rhom[\Comp Ra]{\LLL aR}{\LLL aR}
\simeq\Rhom[\Comp Ra]{\Comp Ra}{\Comp Ra}\simeq \Comp Ra$$
which is not in general isomorphic to
\begin{align*}
\Rhom[\Comp Ra]{\LLL aX}{\RRG aY}
&\simeq\Rhom[\Comp Ra]{\LLL aR}{\RRG aR}\\
&\simeq\Rhom[\Comp Ra]{\Comp Ra}{\RRG aY}\\
&\simeq\RRG aY.
\end{align*}
\end{disc}

The next two results are akin to~\cite[Corollary~3.9]{shaul:hccac}.

\begin{prop}\label{prop151203a}
Let $X\in\catd(R)$ and $Y\in\catd(\Comp Ra)$ be given.
Then there are isomorphisms in $\catd(\Comp Ra)$
\begin{gather*}
\Rhom[\Comp Ra]{\RRG aX}{Y}\simeq\Rhom[\Comp Ra]{\RRG aX}{\LLa a{Y}} \\
\Rhom[\Comp Ra]{Y}{\LLL aX}\simeq\Rhom[\Comp Ra]{\RGa a{Y}}{\LLL aX} 
\end{gather*}
which are natural in $X$ and $Y$.
\end{prop}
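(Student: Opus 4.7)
The plan is to recognize this as a direct corollary of Greenlees-May duality (Fact~\ref{fact150626a}) applied over the ring $\Comp Ra$ with respect to the ideal $\fa\Comp Ra$, combined with the extended MGM equivalence results established in Theorems~\ref{lem150805e} and~\ref{lem150805f}. The key observation is that $\RRG aX$ lies in $\catdaator$ and $\LLL aX$ lies in $\catdaacomp$ by those theorems, which means these complexes are fixed points of $\RGano a$ and $\LLano a$ respectively.

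For the first isomorphism, I would first invoke Theorem~\ref{lem150805f} to obtain $\RGa a{\RRG aX}\simeq\RRG aX$ in $\catd(\Comp Ra)$. Then, applying Greenlees-May duality over $\Comp Ra$ (that is, Fact~\ref{fact150626a} with $R$ replaced by $\Comp Ra$ and $\fa$ replaced by $\fa\Comp Ra$) to the complexes $\RRG aX,Y\in\catd(\Comp Ra)$ yields
\begin{align*}
\Rhom[\Comp Ra]{\RRG aX}{Y}
&\simeq\Rhom[\Comp Ra]{\RGa a{\RRG aX}}{Y}\\
&\simeq\Rhom[\Comp Ra]{\RGa a{\RRG aX}}{\LLa a{Y}}\\
&\simeq\Rhom[\Comp Ra]{\RRG aX}{\LLa a{Y}}.
\end{align*}
For the second isomorphism, I would analogously invoke Theorem~\ref{lem150805e} to identify $\LLa a{\LLL aX}\simeq\LLL aX$ and then apply the $Y$-side of Greenlees-May duality over $\Comp Ra$:
\begin{align*}
\Rhom[\Comp Ra]{Y}{\LLL aX}
&\simeq\Rhom[\Comp Ra]{Y}{\LLa a{\LLL aX}}\\
&\simeq\Rhom[\Comp Ra]{\RGa aY}{\LLa a{\LLL aX}}\\
&\simeq\Rhom[\Comp Ra]{\RGa aY}{\LLL aX}.
\end{align*}

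Naturality in $X$ and $Y$ is automatic, since each of the intermediate isomorphisms is induced either by one of the natural transformations $\varepsilon_{\fa\Comp Ra}$ or $\vartheta^{\fa\Comp Ra}$ (which are natural by construction) or by the natural isomorphisms of Theorems~\ref{lem150805e} and~\ref{lem150805f}. There is no real obstacle here beyond correctly identifying which ring and ideal Greenlees-May is being applied over; the statement is essentially a bookkeeping exercise, extracting the promised isomorphisms from the fact that $\RRG aX$ and $\LLL aX$ are already ``complete on the $\Comp Ra$-side.''
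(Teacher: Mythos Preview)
Your proposal is correct and follows essentially the same approach as the paper: invoke Theorems~\ref{lem150805e} and~\ref{lem150805f} to recognize $\LLL aX$ and $\RRG aX$ as fixed points of $\LLano a$ and $\RGano a$, then apply Greenlees-May duality (Fact~\ref{fact150626a}) over $\Comp Ra$ with respect to $\fa\Comp Ra$. The only cosmetic difference is that the paper collapses your three-step chain into two steps by using the form $\Rhom[\Comp Ra]{Y}{\LLa aZ}\simeq\Rhom[\Comp Ra]{\RGa aY}{Z}$ of Greenlees-May duality directly, rather than passing through the intermediate $\Rhom[\Comp Ra]{\RGa aY}{\LLa aZ}$.
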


\begin{proof}
We verify the second isomorphism; the verification of the first one is similar.
In the following display, the first step is from Theorem~\ref{lem150805e}:
\begin{align*}
\Rhom[\Comp Ra]{Y}{\LLL aX}
&\simeq\Rhom[\Comp Ra]{Y}{\LLa a{\LLL aX}} \\
&\simeq\Rhom[\Comp Ra]{\RGa a{Y}}{\LLL aX}
\end{align*}
The second step is Greenlees-May duality~\ref{fact150626a}.
\end{proof}

\begin{prop}\label{cor151004a}
Let $X\in\catd(R)$ and $Y\in\catd(\Comp Ra)$ be given, and let $Q\colon \catd(\Comp Ra)\to\catd(R)$ denote the forgetful functor.
Then there are isomorphisms in $\catd(R)$
\begin{gather*}
\Rhom[\Comp Ra]{\RRG aX}{Y}\simeq\Rhom{X}{\LL a{Q(Y)}} \\
\Rhom[\Comp Ra]{Y}{\LLL aX}\simeq\Rhom{\RG a{Q(Y)}}{X} 
\end{gather*}
which are natural in $X$ and $Y$.
\end{prop}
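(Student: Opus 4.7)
The plan is to reduce each of the two isomorphisms to a three-step chain: first rewrite $\RRG aX$ (respectively $\LLL aX$) using Fact~\ref{fact130619b}, then invoke the standard derived adjunction along the ring map $R\to\Comp Ra$, and finally apply Greenlees-May duality (Fact~\ref{fact150626a}).

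For the first isomorphism, I would begin with the identification $\RRG aX\simeq\Lotimes{\Comp Ra}{\RG aX}$ of Fact~\ref{fact130619b}, and then apply the derived tensor-Hom adjunction for the ring homomorphism $R\to\Comp Ra$. This yields natural isomorphisms
\[
\Rhom[\Comp Ra]{\RRG aX}{Y}
\simeq\Rhom[\Comp Ra]{\Lotimes{\Comp Ra}{\RG aX}}{Y}
\simeq\Rhom{\RG aX}{Q(Y)}
\]
in $\catd(R)$. The third and fourth morphisms in the Greenlees-May chain of Fact~\ref{fact150626a}, applied with $Q(Y)$ in the second slot, identify the rightmost complex with $\Rhom{X}{\LL a{Q(Y)}}$, which is what we want.

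For the second isomorphism, the strategy is the formal dual. I would use the companion formula $\LLL aX\simeq\Rhom{\Comp Ra}{\LL aX}$ from Fact~\ref{fact130619b}, together with the derived Hom-Hom adjunction for restriction of scalars along $R\to\Comp Ra$, obtaining
\[
\Rhom[\Comp Ra]{Y}{\LLL aX}
\simeq\Rhom[\Comp Ra]{Y}{\Rhom{\Comp Ra}{\LL aX}}
\simeq\Rhom{Q(Y)}{\LL aX}.
\]
A second application of Greenlees-May duality (Fact~\ref{fact150626a}), now with $Q(Y)$ and $X$ playing the roles of $X$ and $Y$, rewrites the right-hand side as $\Rhom{\RG a{Q(Y)}}{X}$.

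Naturality of the composite isomorphisms in both $X$ and $Y$ is automatic from the naturality of each ingredient. I do not expect a genuine obstacle: the only delicate point is keeping track of $\Comp Ra$- versus $R$-module structures across the adjunction steps, which is routine for derived base change along the flat map $R\to\Comp Ra$.
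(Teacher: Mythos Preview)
Your argument is correct and matches the paper's proof essentially step for step: the paper verifies the second isomorphism via the identification $\LLL aX\simeq\Rhom{\Comp Ra}{\LL aX}$ from Fact~\ref{fact130619b}, then Hom-tensor adjointness and tensor-cancellation (your ``Hom-Hom adjunction''), and finally the $\RGno a$/$\LLno a$ adjointness coming from Greenlees-May, declaring the first isomorphism ``similar.'' The only cosmetic difference is that the paper attributes the last step to the adjointness encoded in Fact~\ref{fact130619b} rather than directly to Fact~\ref{fact150626a}, which amounts to the same thing.
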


\begin{proof}
We verify the second isomorphism; the verification of the first one is similar.
In the following display, the first step is from Fact~\ref{fact130619b}:
\begin{align*}
\Rhom[\Comp Ra]{Y}{\LLL aX}
&\simeq\Rhom[\Comp Ra]{Y}{\Rhom{\Comp Ra}{\LL aX}} \\
&\simeq\Rhom{\Lotimes[\Comp Ra]{\Comp Ra}{Y}}{\LL aX} \\
&\simeq\Rhom{Q(Y)}{\LL aX} \\
&\simeq\Rhom{\RG a{Q(Y)}}{X}. 
\end{align*}
The second step is Hom-tensor adjointness, and the third one is tensor-cancellation.
The last step is an adjointness isomorphism that follows from 
Fact~\ref{fact130619b}.
\end{proof}

The next two results form our extension of MGM equivalence, as described in parts~\eqref{thm151129a2} and~\eqref{thm151129a3} of
Theorem~\ref{thm151129a} from the introduction; see Remark~\ref{disc151003a}.

\begin{thm}\label{thm151003a}
The functor
$\RRGno a\colon\catdator\to\catdaator$ and 
the forgetful functor
$Q\colon \catdaator\to\catdator$
are quasi-inverse equivalences.
\end{thm}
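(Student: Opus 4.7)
The plan is to verify that the two functors restrict correctly to the named subcategories and then to produce two natural isomorphisms of composites giving the quasi-inverse. For the restrictions, Theorem~\ref{lem150805f} shows that the essential image of $\RRGno a$ on all of $\catd(R)$ lies in $\catdaator$, so restriction yields $\RRGno a\colon\catdator\to\catdaator$. Conversely, any $\fa\Comp Ra$-torsion module is $\fa$-torsion as an $R$-module (since $\fa\subseteq\fa\Comp Ra$), so for $Y\in\catdaator$ each $\HH_i(Y)$ is $\fa$-torsion, whence Fact~\ref{cor130528a} gives $Q(Y)\in\catdator$.

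The natural isomorphism $Q\circ\RRGno a\simeq\RGno a$ noted in Section~\ref{sec140109b}, composed with $\RG aX\simeq X$ for $X\in\catdator$, immediately produces the first quasi-inverse iso $Q(\RRG aX)\simeq X$, natural in $X$. For the other direction, let $Y\in\catdaator$. Using $Q(Y)\in\catdator$ from the first step together with Fact~\ref{fact130619b}, I would obtain natural isomorphisms
\[
\RRG aQ(Y)\simeq\Lotimes{\Comp Ra}{\RG aQ(Y)}\simeq\Lotimes{\Comp Ra}{Q(Y)}
\]
in $\catd(\Comp Ra)$. It then suffices to show that the counit $\varepsilon_Y\colon\Lotimes{\Comp Ra}{Q(Y)}\to Y$ of the extension–restriction adjunction along $R\to\Comp Ra$ is an isomorphism. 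Since $\Comp Ra$ is flat over $R$, the functor $\Lotimes{\Comp Ra}{-}$ commutes with homology, so $\varepsilon_Y$ on $\HH_i$ is the multiplication map $\Comp Ra\otimes_R\HH_i(Y)\to\HH_i(Y)$; because each $\HH_i(Y)$ is $\fa$-torsion and $\Comp Ra/\fa^n\Comp Ra\cong R/\fa^n$ for every $n$, this multiplication map is an isomorphism, as desired.

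The main obstacle is the last identification $\Lotimes{\Comp Ra}{Q(Y)}\simeq Y$ for $Y\in\catdaator$; all the other steps amount to bookkeeping with results already in hand. This final step rests on the elementary but crucial observation that extension of scalars to $\Comp Ra$ becomes trivial on $\fa$-torsion modules, transported from modules to complexes via flatness of $\Comp Ra$ over $R$.
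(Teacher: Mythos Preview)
Your argument is correct, but for the harder direction $\RRGno a\circ Q\simeq\id$ you take a genuinely different route from the paper. The paper argues directly with resolutions: given $Y\in\catdaator$, a semi-injective resolution $Y\xra\simeq I$ over $\Comp Ra$ restricts (by flatness of $\Comp Ra$ over $R$) to a semi-injective resolution of $Q(Y)$ over $R$, and then one computes $\RRG a{Q(Y)}\simeq\Gamma_{\fa}(Q(I))=\Gamma_{\fa\Comp Ra}(I)\simeq I\simeq Y$ in $\catd(\Comp Ra)$ using that $Y\in\catdaator$. You instead invoke the formula $\RRGno a\simeq\Lotimes{\Comp Ra}{\RGno a{-}}$ from Fact~\ref{fact130619b} and reduce everything to the module-theoretic observation that the multiplication map $\Comp Ra\otimes_R M\to M$ is an isomorphism for every $\fa$-torsion $\Comp Ra$-module $M$, which you then transport to complexes via flatness. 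Your approach isolates a clean module-level fact and is arguably more conceptual, but it leans on Shaul's isomorphism cited in Fact~\ref{fact130619b}; the paper's approach is more self-contained and also yields, as a by-product of the same computation, the identification $\RRGno a\circ Q\simeq\RGano a$ underlying Corollary~\ref{cor151003a}. For the other three ingredients (the two restrictions and the isomorphism $Q\circ\RRGno a\simeq\id$) your treatment matches the paper's.
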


\begin{proof}
Note that $\RRGno a$ maps $\catdator$ into $\catdaator$ by Theorem~\ref{lem150805f}.
The forgetful functor $Q$ maps $\catdaator$ into $\catdator$ by Fact~\ref{cor130528a} and~\cite[Lemma~5.3]{sather:afcc}.

Next, we show that the composition $\RRGno a\circ Q$ is equivalent to the identity.
For this, consider a complex $X\in\catdaator$;
it suffices to show that $\RRG a{Q(X)}\simeq X$ over $\Comp Ra$.
Fix a semi-injective resolution $X\xra\simeq I$ over $\Comp Ra$.
Since $\Comp Ra$ is flat over $R$, this yields a semi-injective resolution $Q(X)\xra\simeq Q(I)$.
Also, the torsion functors $\Gamma_{\fa}$ and $\Gamma_{\fa\Comp Ra}$ are the same when restricted to $\Comp Ra$-complexes.
By definition of $\catdaator$, the inclusion morphism $\Gamma_{\fa\Comp Ra}(I)\to I$ is a quasiisomorphism.
Thus, we have
$$\RRG a{Q(X)}\simeq\Gamma_{\fa}(Q(I))=\Gamma_{\fa\Comp Ra}(I)\simeq I\simeq X$$ 
over $\Comp Ra$,
hence the desired conclusion.

Lastly, the composition $Q\circ\RRGno a$ is $\RGno a$.
When restricted to $\catdator$, this is  isomorphic to the identity by definition of $\catdator$.
\end{proof}

The next result is proved like the previous one, using a semi-flat resolution.

\begin{thm}\label{thm151003b}
The functor
$\LLLno a\colon\catdacomp\to\catdaacomp$ and 
the forgetful functor
$Q\colon \catdaacomp\to\catdacomp$
are quasi-inverse equivalences.
\end{thm}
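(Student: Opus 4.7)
The plan is to imitate the proof of Theorem~\ref{thm151003a} line for line, trading the triple (semi-injective resolution, $\Gamma_{\fa}$, $\catdator/\catdaator$) for the triple (semi-flat resolution, $\Lambda^{\fa}$, $\catdacomp/\catdaacomp$).

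Theorem~\ref{lem150805e} already shows that $\LLLno a$ sends $\catdacomp$ into $\catdaacomp$. For the opposite inclusion, fix $X\in\catdaacomp$ and pick a semi-flat resolution $F\xra\simeq X$ over $\Comp Ra$. Since $\Comp Ra$ is flat over $R$, the associativity isomorphism $F\otimes_R(-)\cong F\otimes_{\Comp Ra}(\Comp Ra\otimes_R(-))$ shows that $Q(F)$ is semi-flat over $R$ and hence is a semi-flat $R$-resolution of $Q(X)$; alternatively, one could invoke Proposition~\ref{prop160202a}, for which it suffices that each $Q(F)_i$ is a flat $R$-module. Because $\fa N=\fa\Comp Ra\cdot N$ for every $\Comp Ra$-module $N$, the functors $\Lambda^{\fa}$ and $\Lambda^{\fa\Comp Ra}$ agree on $\Comp Ra$-complexes; in particular, the chain map $\nu^{\fa}_{Q(F)}\colon Q(F)\to\Lambda^{\fa}(Q(F))$ coincides literally with $\nu^{\fa\Comp Ra}_{F}\colon F\to\Lambda^{\fa\Comp Ra}(F)$. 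The latter represents the isomorphism $X\xra\simeq\LLa aX$ in $\catd(\Comp Ra)$ (by hypothesis on $X$), so it is a quasiisomorphism; hence so is the former, which represents the natural morphism $Q(X)\to\LL a{Q(X)}$. Thus $Q(X)\in\catdacomp$.

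Reading the same chain map in $\catd(\Comp Ra)$ rather than $\catd(R)$ yields
\[
\LLL a{Q(X)}\;\simeq\;\Lambda^{\fa}(Q(F))\;=\;\Lambda^{\fa\Comp Ra}(F)\;\simeq\;\LLa aX\;\simeq\;X,
\]
naturally in $X$, which gives the natural equivalence $\LLLno a\circ Q\simeq\id$ on $\catdaacomp$. In the reverse direction, $Q\circ\LLLno a\simeq\LLno a$ by construction, and $\LLno a$ restricted to $\catdacomp$ is naturally isomorphic to the identity by definition of $\catdacomp$.

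The only genuine technical point is the assertion that $Q(F)$ can be used to compute $\LL a{Q(X)}$, which rests entirely on the flatness of $\Comp Ra$ over $R$; once that and the identification $\Lambda^{\fa}=\Lambda^{\fa\Comp Ra}$ on $\Comp Ra$-modules are in hand, the argument is a verbatim translation of the torsion case treated in Theorem~\ref{thm151003a}.
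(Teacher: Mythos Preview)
Your proof is correct and follows essentially the same route as the paper's, which simply says to rerun the proof of Theorem~\ref{thm151003a} with semi-flat resolutions in place of semi-injective ones. The one minor organizational difference is that the paper's proof of Theorem~\ref{thm151003a} cites Fact~\ref{cor130528a} and an external lemma to show that $Q$ lands in the right subcategory, whereas you extract this directly from the resolution argument (the chain map $\nu^{\fa}_{Q(F)}=\nu^{\fa\Comp Ra}_{F}$ being a quasiisomorphism); your version is slightly more self-contained but not a genuinely different argument.
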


The next result is a consequence of the proofs of Theorems~\ref{thm151003a} and~\ref{thm151003b}.

\begin{cor}\label{cor151003a}
There are natural isomorphisms
$Q\circ\RGano a\simeq\RGno a\circ Q$
and 
$Q\circ\LLano a\simeq\LLno a\circ Q$
of functors $\catd(\Comp Ra)\to\catd(R)$.
\end{cor}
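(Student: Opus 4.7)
The plan is to read both natural isomorphisms directly off the resolution-level computations that already appeared in the proofs of Theorems~\ref{thm151003a} and~\ref{thm151003b}, the point being that the restriction functor $Q$ commutes with the resolutions and the underlying torsion/completion functors when one starts from a resolution over $\Comp Ra$.

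For the first isomorphism $Q\circ\RGano a\simeq\RGno a\circ Q$, I would take any $Y\in\catd(\Comp Ra)$ and choose a semi-injective resolution $Y\xra\simeq I$ over $\Comp Ra$. As already observed in the proof of Theorem~\ref{thm151003a}, flatness of $\Comp Ra$ over $R$ (via Hom-tensor adjunction) ensures that $Q(I)$ is a semi-injective resolution of $Q(Y)$ over $R$, and the torsion functors satisfy $\Gamma_{\fa\Comp Ra}(I)=\Gamma_\fa(Q(I))$ as $R$-complexes because both consist exactly of the elements annihilated by some power of $\fa$ (equivalently, by some power of $\fa\Comp Ra$, since $\fa^nN=(\fa\Comp Ra)^nN$ for every $\Comp Ra$-module $N$). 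Assembling these identifications gives
\[
Q(\RGa aY)\simeq Q(\Gamma_{\fa\Comp Ra}(I))=\Gamma_{\fa}(Q(I))\simeq\RG a{Q(Y)}
\]
in $\catd(R)$, natural in $Y$ by functoriality of the resolution in the homotopy category.

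For $Q\circ\LLano a\simeq\LLno a\circ Q$, I would choose a semi-flat resolution $F\xra\simeq Y$ over $\Comp Ra$. Since each $F_i$ is flat over $\Comp Ra$ and $\Comp Ra$ is flat over $R$, each $F_i$ is flat over $R$, so $Q(F)$ is a complex of flat $R$-modules quasi-isomorphic to $Q(Y)$. Here the key technical input is Proposition~\ref{prop160202a}, which lets us compute $\LL a{Q(Y)}\simeq\Lambda^\fa(Q(F))$ from any flat resolution, not just a semi-flat one; this is essential because the restriction $Q(F)$ of a semi-flat $\Comp Ra$-complex is not \emph{a priori} semi-flat over $R$ (compare Example~\ref{ex150907az}). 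Combined with the identity $\Lambda^{\fa\Comp Ra}(F)=\Lambda^\fa(Q(F))$ on $\Comp Ra$-complexes, we obtain
\[
Q(\LLa aY)\simeq Q(\Lambda^{\fa\Comp Ra}(F))=\Lambda^\fa(Q(F))\simeq\LL a{Q(Y)}
\]
in $\catd(R)$, again natural in $Y$.

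The main (mild) obstacle is this last paragraph: semi-flatness does not pass through the restriction of scalars, and without Proposition~\ref{prop160202a} one would be stuck comparing $\Lambda^\fa(Q(F))$ with a derived functor computed by a different, genuinely semi-flat $R$-resolution of $Q(Y)$. The naturality of both isomorphisms, finally, is routine: the assignments $Y\mapsto I$ and $Y\mapsto F$ are functorial up to homotopy, and $\Gamma_\fa$, $\Lambda^\fa$, and $Q$ are ordinary functors on complexes, so the displayed identifications are visibly natural transformations in $\catd(\Comp Ra)$.
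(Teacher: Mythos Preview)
Your proof is correct and follows essentially the same approach as the paper, which merely says the result ``is a consequence of the proofs of Theorems~\ref{thm151003a} and~\ref{thm151003b}.''  However, there is a small misconception in your second paragraph: the restriction $Q(F)$ of a semi-flat $\Comp Ra$-complex \emph{is} semi-flat over $R$, because for any $R$-complex $M$ one has $\Otimes{Q(F)}{M}\cong\Otimes[\Comp Ra]{F}{(\Otimes{\Comp Ra}{M})}$ and flatness of $\Comp Ra$ over $R$ makes $\Otimes{\Comp Ra}{-}$ preserve (injective) quasiisomorphisms; the paper uses exactly this in the proof of Lemma~\ref{lem150805d}.  Example~\ref{ex150907az} is about a different phenomenon---failure of $\Lambda^{\fa}$ to preserve semi-flatness---and does not bear on restriction of scalars.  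So your invocation of Proposition~\ref{prop160202a} is unnecessary (though harmless): the argument for $\LLno a$ goes through verbatim as in the $\RGno a$ case, using a semi-flat resolution over $\Comp Ra$ that remains semi-flat over $R$.
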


\begin{disc}\label{disc151003a}
Theorems~\ref{thm151003a} and~\ref{thm151003b} have several consequences. 
First, they augment Theorems~\ref{lem150805e} and~\ref{lem150805f} by showing that the essential images
of $\RRGno a$ and $\LLLno a$ in $\catd(\Comp Ra)$ are \emph{equal to} $\catdaator$ and $\catdaacomp$, respectively. 

Second, they show that MGM equivalences over $R$ and $\Comp Ra$ are essentially the same.
These equivalences appear in the rows of the following diagram
$$\xymatrix@=15mm{
\catdaator\ar@<1ex>[r]^-{\LLano a}_-{\simeq}\ar@<1.5ex>[d]^-{Q}_-{\simeq}
&\catdaacomp \ar@<1ex>[l]^-{\RGano a}\ar@<1.5ex>[d]^-{Q}_-{\simeq}
\\
\catdator\ar@<1ex>[r]^-{\LLno a}_-{\simeq}\ar@<1.5ex>[u]^-{\RRGno a}
&\catdacomp\ar@<1ex>[l]^-{\RGno a}\ar@<1.5ex>[u]^-{\LLLno a}
}$$
while our results provide the equivalences in the columns. Various versions of this diagram commute.
For instance, the first diagram in the next display commutes by Lemmas~\ref{lem150805a} and~\ref{lem150805d}.
The second one is from Lemmas~\ref{lem150805b} and~\ref{lem150805c}.
$$\xymatrix{
\catdaator\ar[r]^-{\LLano a}_-{\simeq}
&\catdaacomp 
\\
\catdator\ar[r]^-{\LLno a}_-{\simeq}\ar[u]^-{\RRGno a}_-{\simeq}
&\catdacomp\ar[u]_-{\LLLno a}^-{\simeq}}
\qquad
\xymatrix{
\catdaator
&\catdaacomp \ar[l]_-{\RGano a}^-{\simeq}
\\
\catdator\ar[u]^-{\RRGno a}_-{\simeq}
&\catdacomp\ar[l]_-{\RGno a}^-{\simeq}\ar[u]_-{\LLLno a}^-{\simeq}
}$$
Corollary~\ref{cor151003a} explains the commutativity of next two diagrams.
$$\xymatrix{
\catdaator\ar[r]^-{\LLano a}_-{\simeq}\ar[d]_-{Q}^-{\simeq}
&\catdaacomp \ar[d]^-{Q}_-{\simeq}
\\
\catdator\ar[r]^-{\LLno a}_-{\simeq}
&\catdacomp}
\qquad
\xymatrix{
\catdaator\ar[d]_-{Q}^-{\simeq}
&\catdaacomp \ar[l]_-{\RGano a}^-{\simeq}\ar[d]^-{Q}_-{\simeq}
\\
\catdator
&\catdacomp\ar[l]_-{\RGno a}^-{\simeq}
}$$
\end{disc}

\section{Flat and Injective Dimensions}\label{sec151003a}

In this section, we provide bounds on the flat and injective dimensions over $\Comp Ra$
of $\LLL aX$ and $\RRG aX$,
for use in~\cite{sather:asc}.

\begin{prop}\label{cor151012a}
Let $X\in\catdb(R)$ be given.
Then there are inequalities
\begin{align*}
\id_{\Comp Ra}(\LLL aX)&\leq\id_R(X)\\
\fd_{\Comp Ra}(\RRG aX)&\leq\fd_R(X). 
\end{align*}
\end{prop}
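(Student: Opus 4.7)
The plan is to produce, directly, bounded resolutions of $\LLL aX$ and $\RRG aX$ over $\Comp Ra$ of lengths at most $\id_R(X)$ and $\fd_R(X)$, respectively, by combining a bounded semi-injective (resp.\ semi-flat) resolution of $X$ over $R$ with the telescope complex $T$ on a finite generating sequence for $\fa$. Recall that $T$ is a bounded complex of free $R$-modules with $T\simeq\RG aR$, hence is both semi-projective and semi-flat over $R$. These ingredients are fused through the isomorphisms $\RG a-\simeq\Lotimes{\RG aR}{-}$ and $\LL a-\simeq\Rhom{\RG aR}{-}$ from Fact~\ref{fact130619b}, and then transported across the flat map $R\to\Comp Ra$ via the further isomorphisms $\RRG a-\simeq\Lotimes{\Comp Ra}{\RG a-}$ and $\LLL a-\simeq\Rhom{\Comp Ra}{\LL a-}$ from the same Fact.

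For the flat-dimension bound, when $\fd_R(X)<\infty$, I would choose a bounded semi-flat resolution $F\xra\simeq X$ with $F_i=0$ for $i>\fd_R(X)$. Since $T$ occupies only non-positive homological degrees, $T\otimes_R F$ is a bounded complex of flat $R$-modules representing $\RG aX$ and concentrated in homological degrees $\leq\fd_R(X)$. Base change then gives $\Comp Ra\otimes_R T\otimes_R F$, a bounded complex of flat $\Comp Ra$-modules (flatness is preserved by $\Comp Ra\otimes_R-$) representing $\RRG aX$; being bounded and term-wise flat, it is semi-flat over $\Comp Ra$, and its homological concentration is unchanged. For the injective-dimension bound, when $\id_R(X)<\infty$, I would choose a bounded semi-injective resolution $X\xra\simeq I$ with $I^i=0$ for $i>\id_R(X)$. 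Since $T$ is semi-projective and occupies only non-negative cohomological degrees, $\Hom{T}{I}$ is a bounded complex of injective $R$-modules (products of injectives are injective) representing $\LL aX$ and concentrated in cohomological degrees $\leq\id_R(X)$. Applying $\Hom{\Comp Ra}{-}$ yields a bounded complex representing $\LLL aX$ whose terms are injective over $\Comp Ra$ by the standard adjunction $\Hom[\Comp Ra]{-}{\Hom{\Comp Ra}{E}}\cong\Hom{-}{E}$, which is exact on $\Comp Ra$-modules whenever $E$ is $R$-injective; this is semi-injective over $\Comp Ra$ with the same cohomological bound.

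The only step requiring any care is the bookkeeping of indices, to confirm that $T\otimes_R-$ and $\Hom{T}{-}$ shift amplitudes only in the direction irrelevant to each bound. Once this is verified, the rest reduces to the preservation of flatness and injectivity across the flat base change $R\to\Comp Ra$, together with the facts, recorded in Section~\ref{sec140109b}, that bounded term-wise flat (resp.\ injective) complexes are semi-flat (resp.\ semi-injective).
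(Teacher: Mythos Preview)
Your proposal is correct and follows essentially the same strategy as the paper's proof: both exploit that $\RG aR$ (equivalently $\RRG aR$) admits a bounded flat model concentrated in non-positive homological degrees, together with the isomorphisms in Fact~\ref{fact130619b}. The only differences are cosmetic. First, the paper uses the \v Cech complex over $\Comp Ra$ rather than the telescope complex to witness $\fd_{\Comp Ra}(\RRG aR)\leq 0$. Second, instead of building the resolutions $\Comp Ra\otimes_R T\otimes_R F$ and $\Hom{\Comp Ra}{\Hom{T}{I}}$ explicitly, the paper invokes the general dimension inequalities of Avramov--Foxby \cite[Theorem~4.1(F)]{avramov:hdouc} applied to the one-step isomorphisms $\LLL aX\simeq\Rhom{\RRG aR}{X}$ and $\RRG aX\simeq\Lotimes{\RRG aR}{X}$; your two-step route via $\LL aX$ and $\RG aX$ collapses to these by adjunction, and the Avramov--Foxby theorem is proved precisely by the explicit construction you describe.
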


\begin{proof}
The \v Cech complex over $\Comp Ra$ on a generating sequence for $\fa$ shows that we have $\fd_{\Comp Ra}(\RRG aR)\leq 0$.
Thus, by the isomorphism $\LLL aX\simeq\Rhom{\RRG aR}{X}$ in $\catd(\Comp Ra)$ from Fact~\ref{fact130619b},
we have 
$$\id_{\Comp Ra}(\LLL aX)=\id_{\Comp Ra}(\Rhom{\RRG aR}{X})\leq \fd_{\Comp Ra}(\RRG aR)+\id_R(X)=\id_R(X)$$ 
by~\cite[Theorem~4.1(F)]{avramov:hdouc}.
Similarly, from the isomorphism $\RRG aX\simeq\Lotimes{\RRG aR}{X}$,
we have $\fd_{\Comp Ra}(\RRG aX)\leq \fd_R(X)$ by~\cite[Theorem~4.1(F)]{avramov:hdouc}.
\end{proof}

We end this section with similar bounds for $\id_{\Comp Ra}(\LLL aX)$ and
$\fd_{\Comp Ra}(\RRG aX)$, after the following lemma.

\begin{lem}\label{lem160205a}
Let $N$ be an $\Comp Ra$-module and is either $\fa$-adically complete or $\fa$-torsion.
Then one has
\begin{align*}
\fd_R(N)&=\fd_{\Comp Ra}(N)
&
\id_R(N)&=\id_{\Comp Ra}(N).
\end{align*}
In particular, $N$ is flat over $R$ if and only if it is flat over $\Comp Ra$,
and $N$ is injective over $R$ if and only if it is injective over $\Comp Ra$.
\end{lem}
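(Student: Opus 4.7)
The plan is to split each equality into its two inequalities. The inequalities $\fd_R(N) \le \fd_{\Comp Ra}(N)$ and $\id_R(N) \le \id_{\Comp Ra}(N)$ hold for every $\Comp Ra$-module $N$ via the flatness of $\Comp Ra$ over $R$: flatness composes (so a flat $\Comp Ra$-resolution restricts to a flat $R$-resolution), and restriction along a flat ring map takes injectives to injectives by Hom-tensor adjointness.

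For the reverse inequalities in the $\fa$-torsion case, the key is the natural $\Comp Ra$-linear isomorphism $\Comp Ra \otimes_R N \xra{\cong} N$: given $n \in N$ with $\fa^k n = 0$ and $c \in \Comp Ra$ with $c \equiv r \pmod{\fa^k \Comp Ra}$ for some $r \in R$, one checks $c \otimes n = r \otimes n = 1 \otimes cn$, making multiplication inverse to $n \mapsto 1 \otimes n$. With this, apply the exact functor $\Comp Ra \otimes_R -$ to a flat $R$-resolution $F \xra{\simeq} N$ of length $\fd_R(N)$ to obtain a flat $\Comp Ra$-resolution of $N$ of the same length. For injective dimension, $\supp_R(N) \subseteq \VE(\fa)$ forces the minimal $R$-injective resolution of $N$ to consist of $\fa$-torsion injectives $E_R(R/\fp)$ with $\fp \supseteq \fa$; each such module inherits a natural $\Comp Ra$-structure and is canonically identified with $E_{\Comp Ra}(\Comp Ra / \fp \Comp Ra)$, hence is $\Comp Ra$-injective.

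For the complete case, the dual identification $\Hom{\Comp Ra}{N} \xra{\cong} N$ (evaluation at $1$) plays the analogous role for injective dimension: applying the right-adjoint-to-restriction functor $\Hom{\Comp Ra}{-}$ to an $R$-injective resolution of $N$ yields an $\Comp Ra$-injective resolution of the same length, using that this functor carries $R$-injectives to $\Comp Ra$-injectives and is exact on complexes of $R$-injectives by flatness of $\Comp Ra$; injectivity of the evaluation map here uses that a complete $N$ is $\fa$-adically Hausdorff, so any $R$-linear $f\colon\Comp Ra\to N$ with $f(1)=0$ satisfies $f(\Comp Ra)\subseteq\bigcap_k\fa^kN=0$. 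The main obstacle is the flat dimension bound in the complete case, where the torsion-style isomorphism $\Comp Ra \otimes_R N \cong N$ fails. Here the strategy is the local criterion of flatness: for $R$-flat complete $N$, flat base-change yields that $N/\fa N$ is $R/\fa$-flat and $\Tor[\Comp Ra]{1}{R/\fa}{N} = \Tor{1}{R/\fa}{N} = 0$; since $\fa \Comp Ra$ lies in the Jacobson radical of $\Comp Ra$ and $N$ is $\fa \Comp Ra$-complete, this forces $N$ to be $\Comp Ra$-flat. Extending to higher flat dimension requires induction on $\fd_R(N)$ via syzygies chosen to preserve completeness, which is the technical heart of the argument.
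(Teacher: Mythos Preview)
Your torsion-case arguments are correct and pleasantly direct. The complete case, however, has two real problems.

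For the injective dimension of a complete $N$, your claim that $\Hom{\Comp Ra}{-}$ ``is exact on complexes of $R$-injectives by flatness of $\Comp Ra$'' is wrong as stated: flatness of $\Comp Ra$ makes $\Comp Ra\otimes_R-$ exact, not $\Hom{\Comp Ra}{-}$. For $\Hom{\Comp Ra}{I}$ to remain a resolution of $N\cong\Hom{\Comp Ra}{N}$, you need $\Ext{i}{\Comp Ra}{N}=0$ for all $i>0$. This does hold for $\fa$-adically complete $\Comp Ra$-modules---one can deduce it, for instance, from the isomorphism $\LLL a-\simeq\Rhom{\Comp Ra}{\LL a-}$ of Fact~\ref{fact130619b} together with Theorem~\ref{thm151003b}---but it is not a formal consequence of flatness and requires its own argument.

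For the flat dimension of a complete $N$, you yourself flag the induction via ``syzygies chosen to preserve completeness'' as the technical heart, and you do not carry it out. It is genuinely unclear how one would: a syzygy inside an infinitely generated free $\Comp Ra$-module need not be $\fa$-adically complete, while a syzygy inside a free $R$-module need not be an $\Comp Ra$-module at all.

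The paper bypasses both difficulties with a single uniform computation. It invokes the formula
\[
\fd_{\Comp Ra}(N)=\sup\{\sup(\Lotimes[\Comp Ra]{(\Comp Ra/P)}{N})\mid P\in\VE(\fa\Comp Ra)\}
\]
(due to Simon~\cite{simon:shpcm} when $N$ is complete, and following from $\Supp_{\Comp Ra}(N)\subseteq\VE(\fa\Comp Ra)$ when $N$ is torsion), then uses the bijection $\p\mapsto\p\Comp Ra$ between $\VE(\fa)$ and $\VE(\fa\Comp Ra)$ to rewrite $\Lotimes[\Comp Ra]{(\Comp Ra/\p\Comp Ra)}{N}\simeq\Lotimes{(R/\p)}{N}$ and bound each term by $\fd_R(N)$. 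The injective-dimension equality is handled by the analogous Ext formula.
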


\begin{proof}
We verify the first displayed equality in the statement of the lemma; the second one is verified similarly, and the subsequent statements follow from these directly.
Since $\Comp Ra$ is flat over $R$, the inequality $\fd_R(N)\leq\fd_{\Comp Ra}(N)$ is from~\cite[Corollary~4.1(b)(F)]{avramov:hdouc}. (Note that this does not use
the assumption that $N$ is $\fa$-adically complete or $\fa$-torsion.)

Claim. We have
\begin{equation}\label{eq160205a}
\fd_{\Comp Ra}(N)=\sup\{\sup(\Lotimes[\Comp Ra]{(\Comp Ra/P)}{N})\mid P\in\VE(\fa\Comp Ra)\}.
\end{equation}
If $N$ is $\fa$-adically complete, then this is by~\cite[Proposition~2.1]{simon:shpcm}.
If $N$ is $\fa$-torsion, it is straightforward to show that we have $\Supp_{\Comp Ra}(N)\subseteq\VE(\fa\Comp Ra)$, so the claim follows from~\cite[Proposition~5.3.F]{avramov:hdouc}.
(The corresponding formula for injective dimension is from~\cite[Proposition~3.2]{simon:shpcm} and~\cite[Proposition~5.3.I]{avramov:hdouc}.)

Now we complete the proof by verifying the inequality
$\fd_R(N)\geq\fd_{\Comp Ra}(N)$.
The natural isomorphism $\Comp Ra/\fa\Comp Ra\cong R/\fa$ shows that every $P\in\VE(\fa\Comp Ra)$ is of the form $P=\p\Comp Ra$ for 
a unique prime ideal $\p\in\VE(\fa)\subseteq\spec(R)$. This also implies that 
$$\Comp Ra/P=\Comp Ra/\p\Comp Ra\simeq \Lotimes{\Comp Ra}{(R/\p)}$$ so we find that
$$\Lotimes[\Comp Ra]{(\Comp Ra/P)}{N}\simeq
\Lotimes[\Comp Ra]{(\Lotimes{\Comp Ra}{(R/\p)})}{N}\simeq\Lotimes{(R/\p)}N.$$
From this, we have
$$\sup(\Lotimes[\Comp Ra]{(\Comp Ra/P)}{N})=\sup(\Lotimes{(R/\p)}N)\leq\fd_R(N).$$
Thus, the inequality $\fd_R(N)\geq\fd_{\Comp Ra}(N)$ follows from~\eqref{eq160205a}.
\end{proof}

\begin{prop}\label{prop160205a}
Let $X\in\catdb(R)$ be given.
\begin{enumerate}[\rm(a)]
\item\label{prop160205a1}
Then there is an inequality
$\id_{\Comp Ra}(\RRG aX)\leq\id_R(X)$.
\item\label{prop160205a2}
If at least one of the following conditions holds
\begin{enumerate}[\rm \ \ \ \ (1)]
\item $\pd_R(X)<\infty$,
\item $\dim(R)<\infty$, or
\item $X$ is $\fb$-adically finite for some ideal $\fb$, e.g., $X\in\catdfb(R)$,
\end{enumerate}
then one has
$\fd_{\Comp Ra}(\LLL aX)\leq\fd_R(X)$. 
\end{enumerate}
\end{prop}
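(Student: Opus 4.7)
My plan is to mirror the argument of Proposition~\ref{cor151012a} but using semi-injective and semi-flat resolutions directly, then analyzing how $\Gamma_\fa$ and $\Lambda^\fa$ act on individual injective or flat modules. The subtle point is that neither functor preserves semi-injectivity or semi-flatness in general, as shown by Examples~\ref{ex150907a} and~\ref{ex150907az}; at the level of bounded complexes, however, matters are more forgiving.

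For part~\eqref{prop160205a1}, I would assume $n := \id_R(X) < \infty$ (otherwise there is nothing to prove) and choose a bounded semi-injective resolution $X \xra\simeq I$ over $R$ of length $n$. The complex $\Gamma_\fa(I)$ represents $\RG aX$ in $\catd(R)$, and, carrying its natural $\Comp Ra$-structure, represents $\RRG aX$ in $\catd(\Comp Ra)$. Each $\Gamma_\fa(I_j)$ is the direct summand of $I_j$ consisting of those indecomposable summands $E_R(R/\fp)$ with $\fp \supseteq \fa$, so it is injective over $R$ and $\fa$-torsion; Lemma~\ref{lem160205a} then promotes this to injectivity over $\Comp Ra$. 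Since a bounded complex of injective $\Comp Ra$-modules is automatically semi-injective (being bounded above), $\Gamma_\fa(I)$ is a semi-injective resolution of $\RRG aX$ of length $\leq n$, yielding the bound $\id_{\Comp Ra}(\RRG aX) \leq n = \id_R(X)$.

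For part~\eqref{prop160205a2}, the parallel plan is to choose a bounded semi-flat resolution $F \xra\simeq X$ over $R$ of length $\fd_R(X) < \infty$; by Proposition~\ref{prop160202a}, $\Lambda^\fa(F)$ represents $\LL aX$ in $\catd(R)$, and with its natural $\Comp Ra$-structure represents $\LLL aX$ in $\catd(\Comp Ra)$. If each $\Lambda^\fa(F_i)$ is flat over $\Comp Ra$, then $\Lambda^\fa(F)$ is a bounded complex of flat $\Comp Ra$-modules, hence semi-flat, of length $\leq \fd_R(X)$, giving the desired bound.

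The main obstacle is securing flatness of $\Lambda^\fa(F_i)$ over $\Comp Ra$, which does not hold for arbitrary flat $F_i$. Each hypothesis provides a different workaround. Under~(1), I would take $F = P$ a bounded projective resolution and exploit that each $P_i$ is a summand of a free $R$-module whose $\fa$-adic completion is flat over $\Comp Ra$, by the classical completion-of-free-modules result invoked in the proof of Lemma~\ref{lem151001a}. Under~(2), finite Krull dimension combined with Jensen's theorem reduces an arbitrary flat resolution to one by cotorsion-flat modules, whose $\fa$-adic completions remain flat. Under~(3), adic finiteness of $X$---in particular the case $X \in \catdfb(R)$---yields $\LLL aX \simeq \Lotimes{\Comp Ra}{X}$ via Fact~\ref{fact130619b}, so tensoring a bounded semi-flat resolution of $X$ with the flat $R$-algebra $\Comp Ra$ directly produces a bounded semi-flat resolution of $\LLL aX$ over $\Comp Ra$ of length $\fd_R(X)$; the reduction from general $\fb$-adic finiteness to the homologically finite case can be carried out via the Koszul complex on generators of $\fb$. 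The hardest step is handling~(2), where the completion of an arbitrary flat module need not be flat and the cotorsion-flat/Jensen machinery is essential.
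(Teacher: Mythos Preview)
Your treatment of part~\eqref{prop160205a1} is correct and matches the paper's argument.

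For part~\eqref{prop160205a2}, your overall strategy is right, but each case has a gap. In case~(1), taking $F=P$ a bounded \emph{projective} resolution yields a complex of length $\pd_R(X)$, not $\fd_R(X)$; since these can differ (e.g., $\bbq$ over $\bbz$), you only get $\fd_{\Comp Ra}(\LLL aX)\leq\pd_R(X)$, which is weaker than claimed. In case~(2), the appeal to ``cotorsion-flat modules via Jensen'' is too vague to be a proof; you would need to explain how to replace $F$ by a resolution of the same length whose terms have flat completion. In case~(3), Fact~\ref{fact130619b} gives $\LLL aX\simeq\Lotimes{\Comp Ra}{X}$ only for $X\in\catdf_+(R)$, not for arbitrary $\fb$-adically finite $X$, and your proposed Koszul reduction does not recover the bound $\fd_R(X)$ for general $\fb$-adically finite complexes.

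The paper's argument unifies all three cases through a single missing ingredient: Schoutens' theorem~\cite[Theorem~5.9]{schoutens:lfccm} that the $\fa$-adic completion of a flat $R$-module of \emph{finite projective dimension} is flat. One truncates a semi-projective resolution $P$ at height $f=\fd_R(X)$, obtaining a semi-flat resolution $F$ with $F_i=P_i$ projective for $i<f$ and $F_f$ flat. The entire case analysis then reduces to the single claim $\pd_R(F_f)<\infty$: under~(1) this is immediate since $F_f$ is a syzygy of $X$; under~(2) one invokes Gruson--Raynaud and Jensen to get $\pd_R(F_f)\leq\dim(R)$; under~(3) one uses~\cite[Theorem~6.1]{sather:afcc}, which says that $\fb$-adic finiteness together with $\fd_R(X)<\infty$ forces $\pd_R(X)<\infty$, reducing to~(1). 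Schoutens then gives flatness of each $\Lambda^\fa(F_i)$ over $R$, and Lemma~\ref{lem160205a} promotes this to flatness over $\Comp Ra$.
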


\begin{proof}
\eqref{prop160205a1}
Assume without loss of generality that $\id_R(X)<\infty$, and let $X\xra\simeq J$ be a bounded semi-injective resolution
over $R$ such that $J_{i}=0$ for all $i<-\id_R(X)$. 
It follows that the $R$-complex $\Gamma_{\ideal{a}}(J)$ is a bounded
semi-injective resolution of $\RG aX$ over $R$ such that 
$\Gamma_{\ideal{a}}(J)_{i}=\Gamma_{\ideal{a}}(J_{i})=0$ for all $i<-\id_R(X)$. 
Since each module in this complex is $\fa$-torsion, the complex $\Gamma_{\ideal{a}}(J)$ is an $\Comp Ra$-complex,
and Lemma~\ref{lem160205a} implies that it consists of injective $\Comp Ra$-modules.
Thus, the $\Comp Ra$-complex $\Gamma_{\ideal{a}}(J)$ is a bounded
semi-injective resolution of $\RRG aX$ over $\Comp Ra$ such that 
$\Gamma_{\ideal{a}}(J)_{i}=\Gamma_{\ideal{a}}(J_{i})=0$ for all $i<-\id_R(X)$. 
The  inequality $\id_{\Comp Ra}(\RRG aX)\leq\id_R(X)$  follows.

\eqref{prop160205a2}
The argument here is similar to the previous one, but with a twist. 
As before, assume without loss of generality that $f=\fd_R(X)<\infty$.
Let $P\xra\simeq X$ be a bounded semi-projective resolution
over $R$. Truncate $P$ appropriately to obtain
a bounded semi-flat resolution $F\xra\simeq X$
over $R$ such that $F_{i}=0$ for all $i>f$ and $F_i=P_i$ for all $i<f$. 

Claim: $\pd_R(F_f)<\infty$. Indeed, in case $\pd_R(X)<\infty$, this is standard.
If $X$ is $\fa$-adically finite, then we have $\pd_R(X)<\infty$ by~\cite[Theorem~6.1]{sather:afcc}, so the claim is established in this case.
Lastly, a result of Gruson and Raynaud~\cite[Seconde Partie, Theorem~3.2.6]{raynaud:cpptpm} and Jensen~\cite[Proposition~6]{jensen:vl} implies that
$\pd_R(F_f)\leq\dim(R)$, so 
the claim holds when $\dim(R)<\infty$.

By the claim, a result of Schoutens~\cite[Theorem~5.9]{schoutens:lfccm} implies that each module $\Lambda^\fa(F_i)$ is flat over $R$.
(If $\dim(R)<\infty$, this is due to Enochs~\cite{enochs:cfm}.)
From Lemma~\ref{lem160205a}, we conclude that each module $\Lambda^\fa(F_i)$ is flat over $\Comp Ra$.
Note that this uses the fact that $\Lambda^\fa(F_i)$ is $\fa$-adically complete; see, e.g., \cite[Corollary~1.7]{yekutieli:fcigm}.
So, 
the $\Comp Ra$-complex $\Lambda^{\ideal{a}}(F)$ is a bounded
semi-flat resolution of $\LLL aX$ over $\Comp Ra$ such that 
$\Lambda^{\ideal{a}}(F)_{i}=0$ for all $i>\fd_R(X)$. 
The  inequality $\fd_{\Comp Ra}(\LLL aX)\leq\fd_R(X)$ now follows.
\end{proof}

\begin{disc}\label{disc160205a}
To our knowledge, it is not known whether the completion $\Lambda^\fa(F)$ of a flat $R$-module $F$ is flat over $R$.
If this is true, then the extra assumptions (1)--(3) can be removed from Proposition~\ref{prop160205a}\eqref{prop160205a2}.
\end{disc}

\section{Cohomological Adic Cofiniteness}\label{sec151002a}

Next, we discuss the connection between $\fa$-adically finite complexes and the following similar notion from~\cite{yekutieli:ccc}.

\begin{defn}\label{defn150626a}
Assume that $R$ is $\fa$-adically complete.
An $R$-complex $X\in\catdb(R)$ is 
\emph{cohomologically $\fa$-adically cofinite} if there is a complex $N\in\catdfb(R)$ such that $X\simeq\RG aN$.
\end{defn}

Our first result in this direction, given next, shows that, when it makes sense to compare these two notions, they are the same.
It is primarily from~\cite{yekutieli:ccc}.

\begin{prop}\label{prop150626a}
Assume that $R$ is $\fa$-adically complete.
An $R$-complex $X\in\catdb(R)$ is cohomologically $\fa$-adically cofinite if and only if it is $\fa$-adically finite.
\end{prop}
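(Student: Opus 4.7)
The plan is to prove both implications directly from the formal properties recorded in Section~\ref{sec140109b}, using $R \simeq \Comp Ra$ throughout to identify $\LLno a$ with $\LLLno a$ and $\RGno a$ with $\RRGno a$ in this setting.

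First I would dispatch the forward direction. Assume $X \simeq \RG aN$ for some $N\in\catdfb(R)$. Then the homology modules of $X$ are those of $\RG aN$, hence $\fa$-torsion, so Fact~\ref{cor130528a} gives $\supp_R(X)\subseteq\VE(\fa)$. Next I would verify condition~\eqref{cor130612a4} of Fact~\ref{thm130612a} by computing
\[
\LLL aX \simeq \LLL a{\RG aN} \simeq \LLL aN \simeq \Lotimes{\Comp Ra}N \simeq N,
\]
where the second isomorphism is Lemma~\ref{lem150805a} (the extended $\LLno a\circ\RGno a\simeq\LLno a$), the third is Fact~\ref{fact130619b} applied to $N\in\catdf_+(R)$, and the last uses that $R\simeq\Comp Ra$. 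Since $N\in\catdfb(R)$, this shows $\LLL aX\in\catdfb(\Comp Ra)$, so $X$ is $\fa$-adically finite.

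For the reverse direction, assume $X\in\catdb(R)$ is $\fa$-adically finite. Since $\supp_R(X)\subseteq\VE(\fa)$, Fact~\ref{cor130528a} gives $X\in\catdator$, so $\RG aX\simeq X$. Set $N:=\LLL aX$; by hypothesis $N\in\catdfb(\Comp Ra)=\catdfb(R)$. Then by Fact~\ref{fact150626a} (specifically the isomorphism $\RGno a\circ\LLno a\simeq\RGno a$)
\[
\RG aN \simeq \RG a{\LL aX} \simeq \RG aX \simeq X,
\]
which realizes $X$ as $\RG aN$ for a homologically finite $N$, exactly the definition of cohomologically $\fa$-adically cofinite.

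The argument involves no real obstacles; the only point requiring care is bookkeeping about which ring the various complexes live over. That bookkeeping is trivialized by the standing assumption $R\simeq\Comp Ra$, which collapses the distinction between $\catd(R)$ and $\catd(\Comp Ra)$ and between the corresponding torsion/completion functors. All the substantive content has already been imported from~\cite{yekutieli:ccc} via Facts~\ref{fact130619b}, \ref{fact150626a}, \ref{cor130528a}, and Lemma~\ref{lem150805a}.
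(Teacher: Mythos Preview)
Your proof is correct, but it follows a different route from the paper's. The paper invokes the external characterization \cite[Theorem~0.4]{yekutieli:ccc} twice: once in each direction it checks condition~\eqref{cor130612a3} of Fact~\ref{thm130612a} (that $\Rhom{R/\fa}{X}\in\catdf(R)$) and appeals to the Porta--Shaul--Yekutieli equivalence between that finiteness condition and cohomological cofiniteness. By contrast, you work entirely with condition~\eqref{cor130612a4} of Fact~\ref{thm130612a} and with the paper's own MGM machinery (Lemma~\ref{lem150805a} and Fact~\ref{fact150626a}), and in the reverse direction you produce the witness $N=\LLL aX$ explicitly rather than deducing its existence from \cite{yekutieli:ccc}. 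Your argument is thus more self-contained relative to the present paper and is essentially the special case $R=\Comp Ra$ of the argument later given for Theorem~\ref{thm150626a}; what the paper's shorter proof buys is that it exhibits Proposition~\ref{prop150626a} as a direct consequence of~\cite{yekutieli:ccc}, reinforcing the claim that the result ``is primarily from~\cite{yekutieli:ccc}.'' One minor quibble: your closing sentence attributes Facts~\ref{fact130619b}, \ref{fact150626a}, \ref{cor130528a} and Lemma~\ref{lem150805a} to~\cite{yekutieli:ccc}, but those results come from other sources; your argument does not actually use~\cite{yekutieli:ccc} at all.
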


\begin{proof}
Assume first that $X$ is cohomologically $\fa$-adically cofinite,
so by definition there is a complex $N\in\catdfb(R)$ such that $X\simeq\RG aN$.
Fact~\ref{fact150626a} implies that $X$ is in $\catdator$, so we have $\supp_R(X)\subseteq\VE(\fa)$
by Fact~\ref{cor130528a}, and we have $\Rhom{R/\fa}X\in\catdf(R)$ by~\cite[Theorem 0.4]{yekutieli:ccc}.
Thus, $X$ is $\fa$-adically finite.

Conversely, assume that $X$ is $\fa$-adically finite.
Then we have $\supp_R(X)\subseteq\VE(\fa)$ by definition, so $X$ is in $\catdator$ by Fact~\ref{cor130528a}.
Also by definition, we have $\Rhom{R/\fa}X\in\catdf(R)$, so according to~\cite[Theorem 0.4]{yekutieli:ccc}, the complex
$X$ is cohomologically $\fa$-adically cofinite.
\end{proof}

The next result gives a similar characterization of $\fa$-adically finite complexes in the incomplete setting.

\begin{thm}\label{thm150626a}
Let $Q\colon\catd(\Comp Ra)\to\catd(R)$ be the forgetful functor,
and let $\catd(R)_{\text{$\fa$-fin}}$ be the full subcategory of $\catd(R)$ consisting of all $\fa$-adically finite $R$-complexes.
\begin{enumerate}[\rm(a)]
\item\label{thm150626a1}
An $R$-complex $X\in\catdb(R)$ is  $\fa$-adically finite if and only if there is a complex $N\in\catdfb(\Comp Ra)$ such that $X\simeq Q(\RGa aN)$.
\item\label{thm150626a2}
The functor $\LLLno a$ induces an equivalence of categories $\catd(R)_{\text{$\fa$-fin}}\to\catdfb(\Comp Ra)$ with quasi-inverse 
induced by $Q\circ\RGano a$.
\item\label{thm150626a3}
If there is a complex $N\in\catdfb(R)$ such that $X\simeq\RG aN$,
then $X\in\catd(R)_{\text{$\fa$-fin}}$.
\end{enumerate}
\end{thm}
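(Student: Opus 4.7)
The plan splits naturally by the three parts, and my strategy is to prove (c) directly from Lemma~\ref{prop151104a}, to prove (a) by identifying both sides via the extended MGM machinery of Section~\ref{sec150626a}, and then to package (b) from the arguments in (a).

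For part~(c), assuming $X\simeq\RG aN$ with $N\in\catdfb(R)$, I observe that $\supp_R(R/\fa)=\VE(\fa)$, so Lemma~\ref{prop151104a} gives a natural isomorphism $\Lotimes{(R/\fa)}{\RG aN}\simeq\Lotimes{(R/\fa)}{N}$. The right-hand side lies in $\catdf(R)$ because $R/\fa$ and $N$ are both homologically finite, so $X$ satisfies condition~\eqref{cor130612a2} of Fact~\ref{thm130612a}. Facts~\ref{fact150626a} and~\ref{cor130528a} then yield $\supp_R(X)\subseteq\VE(\fa)$, so $X$ is $\fa$-adically finite.

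For part~(a), I handle both implications by exhibiting explicit inverses. Given $X$ $\fa$-adically finite, set $N:=\LLL aX$, which lies in $\catdfb(\Comp Ra)$ by condition~\eqref{cor130612a4} of Fact~\ref{thm130612a}. Combining Lemmas~\ref{lem150805c} and~\ref{lem150805b} gives $\RGa a(\LLL aX)\simeq\RRG a(\LL aX)\simeq\RRG aX$, so $Q(\RGa aN)\simeq\RG aX$, and the support condition with Fact~\ref{cor130528a} yields $\RG aX\simeq X$. Conversely, given $N\in\catdfb(\Comp Ra)$ and $X:=Q(\RGa aN)$, the homology of $X$ is $\fa\Comp Ra$-torsion over $\Comp Ra$ and hence $\fa$-torsion over $R$, so $\supp_R(X)\subseteq\VE(\fa)$. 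To verify condition~\eqref{cor130612a4}, I write $\LLL aX\simeq\LLa a(\RRG aX)$ via Theorem~\ref{lem150805e} and Lemma~\ref{lem150805d}; Theorem~\ref{thm151003a} gives $\RRG aX=\RRG a(Q(\RGa aN))\simeq\RGa aN$ since $\RGa aN\in\catdaator$; and $\LLa a(\RGa aN)\simeq N$ by MGM equivalence applied over $\Comp Ra$, using the $\Comp Ra$-version of Fact~\ref{fact130619b} to see that any $N\in\catdfb(\Comp Ra)$ is automatically $\fa\Comp Ra$-adically complete, i.e., lies in $\catdaacomp$. Hence $\LLL aX\simeq N\in\catdfb(\Comp Ra)$.

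For part~(b), the arguments of (a) package directly into quasi-inverse equivalences: $\LLLno a$ maps $\catd(R)_{\text{$\fa$-fin}}$ into $\catdfb(\Comp Ra)$ by condition~\eqref{cor130612a4}, and $Q\circ\RGano a$ lands in $\catd(R)_{\text{$\fa$-fin}}$ by the reverse direction of (a). The two compositions are isomorphic to the identity via the natural isomorphisms $X\simeq Q(\RGa a(\LLL aX))$ and $N\simeq\LLL a(Q(\RGa aN))$ established in the two directions of (a). The only real obstacle I anticipate is keeping the $\fa$- versus $\fa\Comp Ra$-bookkeeping straight while invoking the right quasi-inverse at each step; the essential content has already been built in Section~\ref{sec150626a}, so no further Greenlees-May or MGM work is needed.
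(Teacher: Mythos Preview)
Your arguments for parts~(a) and~(b) are essentially the paper's: the paper organizes the same ingredients into three claims establishing the natural isomorphisms $N\simeq\LLL a(Q(\RGa aN))$ and $X\simeq Q(\RGa a(\LLL aX))$, then packages~(b) first with~(a) as a consequence, but the content is the same. One small correction: the isomorphism $\LLL aX\simeq\LLa a(\RRG aX)$ you need in the reverse direction of~(a) follows from Lemmas~\ref{lem150805a} and~\ref{lem150805d}, not Theorem~\ref{lem150805e} (the latter says $\LLL aX\simeq\LLa a(\LLL aX)$, which does not link to $\RRG aX$).

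For part~(c) you take a genuinely different and more direct route. The paper proves~(c) \emph{after}~(a): it sets $N':=\LLL aN\simeq\Lotimes{\Comp Ra}N\in\catdfb(\Comp Ra)$ via Fact~\ref{fact130619b}, verifies $X\simeq Q(\RGa a{N'})$, and then invokes the already-established part~(a). Your approach via Lemma~\ref{prop151104a} applied to $R/\fa$ and condition~\eqref{cor130612a2} of Fact~\ref{thm130612a} is self-contained and avoids the extended $\Comp Ra$-machinery entirely, which is logically cleaner and makes~(c) independent of~(a) and~(b). The paper's route, on the other hand, has the virtue of explicitly exhibiting the complex $N'\in\catdfb(\Comp Ra)$ that witnesses the characterization in~(a), thereby tying~(c) back to the main equivalence.
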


\begin{proof}
Claim 1: if $N\in\catdfb(\Comp Ra)$, then we have 
$N\simeq\LLL a{Q(\RGa aN)}$ in $\catd(\Comp Ra)$.
Indeed, the first isomorphism in the following sequence is from Corollary~\ref{cor151003a}.
\begin{align*}
\LLL a{Q(\RGa aN)}
&\simeq\LLL a{\RG a{Q(N)}} 
\simeq\LLL a{Q(N)} 
\simeq N
\end{align*}
The second isomorphism is from Lemma~\ref{lem150805a}.
The third one is from Theorem~\ref{thm151003b}; to apply this result, we use the conditions $N\in\catdfb(\Comp Ra)\subseteq\catdaacomp$.

Claim 2: if $N\in\catdfb(\Comp Ra)$, then we have $Q(\RGa aN)\in \catd(R)_{\text{$\fa$-fin}}$.
Indeed, Corollary~\ref{cor151003a} implies that $Q(\RGa aN)\simeq \RG a{Q(N)}$, so we have
$$\supp_R(Q(\RGa aN))=\supp_R(\RG a{Q(N)})\subseteq\VE(\fa)$$ 
by~\cite[Proposition~3.6]{sather:scc}.
Also, Claim~1 shows that $\LLL a{Q(\RGa aN)}\simeq N\in\catdfb(\Comp Ra)$, so $Q(\RGa aN)$
is $\fa$-adically finite by definition.

Claim 3: if $X\in\catd(R)_{\text{$\fa$-fin}}$, then 
$X\simeq Q(\mathbf{R}\Gamma_{\fa\Comp Ra}(\LLL aX))$
in $\catd(R)$.
The first three isomorphisms in the following sequence are from 
Lemma~\ref{lem150805c}, Lemma~\ref{lem150805b}, and Theorem~\ref{thm151003a}, respectively.
\begin{align*}
Q(\mathbf{R}\Gamma_{\fa\Comp Ra}(\LLL aX))
&\simeq Q(\RRG a{\LL aX})
\simeq Q(\RRG a{\RG aX})
\simeq\RG aX
\simeq X
\end{align*}
The fourth isomorphism is by Fact~\ref{cor130528a}, as we have $\supp_R(X)\subseteq\VE(\fa)$ by assumption.

Now we complete the proof of the result. 
By definition, if $X\in\catd(R)_{\text{$\fa$-fin}}$, then $\LLL aX\in\catdfb(\Comp Ra)$, 
that is, the functor $\LLLno a$ maps $\catd(R)_{\text{$\fa$-fin}}$ to $\catdfb(\Comp Ra)$.
Claim~2 shows that $Q\circ\RGano a$ maps $\catdfb(\Comp Ra)$ to $\catd(R)_{\text{$\fa$-fin}}$, 
and Claim 1 shows that the composition $\LLLno a\circ Q\circ\RGano a$ is isomorphic to the identity on $\catdfb(\Comp Ra)$.
Claim~3 shows that the composition $Q\circ\RGano a\circ\LLLno a$ is isomorphic to the identity on $\catd(R)_{\text{$\fa$-fin}}$.
This establishes part~\eqref{thm150626a2} of the theorem, and part~\eqref{thm150626a1} follows.

For part~\eqref{thm150626a3}, assume that there is a complex $N\in\catdfb(R)$ such that $X\simeq\RG aN$.
Then the complex $N':=\LLL aN\simeq\Lotimes{\Comp Ra}N\in\catdfb(\Comp Ra)$
satisfies
\begin{align*}
Q(\RGa a{N'})
&\simeq Q(\RGa a{\LLL aN})\\
&\simeq\RG a{Q(\LLL aN)}\\
&\simeq\RG a{\LL aN}\\
&\simeq\RG aN\\
&\simeq X;
\end{align*}
see Fact~\ref{fact130619b}.
So, we have $X\simeq Q(\RGa a{N'})\in\catd(R)_{\text{$\fa$-fin}}$
by part~\eqref{thm150626a1}.
\end{proof}

The next example shows that the converse of Theorem~\ref{thm150626a}\eqref{thm150626a3} fails in general.
Thus, the characterization in Theorem~\ref{thm150626a}\eqref{thm150626a1} cannot be simplified (at least not in the naive manner suggested
by Theorem~\ref{thm150626a}\eqref{thm150626a3}).

\begin{ex}\label{ex150628a}
Let $(R,\m,k)$ be a local ring that does not admit a dualizing complex.
Such a ring exists by work of Ogoma~\cite{ogoma}.
Set $\comp R:=\Comp Rm$.
The injective hull $E:=E_R(k)$ is $\m$-adically finite by~\cite[Proposition~7.8(b)]{sather:scc}.
Suppose that there is an $R$-complex $N\in\catdfb(R)$ such that $E\simeq\RG mN$. 
In~\cite[Example~6.7]{sather:asc} we show that this implies that $N$ is dualizing for $R$, contradicting our assumption on $R$.
\end{ex}

\section{Induced Isomorphisms}\label{sec151104b}

This section consists of useful isomorphisms derived from our preceding  results.
We begin with extended versions of Lemma~\ref{prop151104a}.

\begin{prop}\label{prop151105a}
Let $X\in\catd(\Comp Ra)$ be such that $\supp_{\Comp Ra}(X)\subseteq\VE(\fa\Comp Ra)$.
Given an $R$-complex $M\in\catd(R)$, there are isomorphisms in $\catd(\Comp Ra)$
\begin{gather*}
\Lotimes[\Comp Ra] X{\RRG aM}\simeq\Lotimes XM\simeq
\Lotimes[\Comp Ra] X{(\Lotimes{\Comp Ra}M)}
\\
\Rhom[\Comp Ra]{X}{\LLL aM}\simeq\Rhom XM\simeq\Rhom[\Comp Ra]{X}{\Rhom{\Comp Ra}{M}}.
\end{gather*}
\end{prop}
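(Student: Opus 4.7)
The plan is to reduce all four isomorphisms to two routine operations (associativity of the derived tensor product, Hom-tensor adjointness, and tensor cancellation) together with the single key observation that, under the support hypothesis on $X$, one has
\begin{equation*}
\Lotimes[\Comp Ra]{X}{\RRG aR}\simeq X \qquad\text{in } \catd(\Comp Ra).
\end{equation*}

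To establish this key observation, I would first invoke Fact~\ref{cor130528a} over $\Comp Ra$ (applied with the ideal $\fa\Comp Ra$) to place $X$ in $\catdaator$. The identification $\RRG aR\simeq\RGa a{\Comp Ra}$ in $\catd(\Comp Ra)$ then follows from Lemmas~\ref{lem150805b} and~\ref{lem150805c} applied to the $R$-complex $R$, combined with $\LL aR\simeq\Comp Ra$ and $\LLL aR\simeq\Comp Ra$ from Fact~\ref{fact130619b}; explicitly, $\RRG aR\simeq\RRG a{\LL aR}\simeq\RGa a{\LLL aR}\simeq\RGa a{\Comp Ra}$. With this identification, Lemma~\ref{prop151104a} applied over $\Comp Ra$ (to $X$ as the support-restricted factor and to $\Comp Ra$ as the variable) yields the chain $\Lotimes[\Comp Ra]{X}{\RRG aR}\simeq\Lotimes[\Comp Ra]{X}{\RGa a{\Comp Ra}}\simeq\Lotimes[\Comp Ra]{X}{\Comp Ra}\simeq X$.

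For the tensor isomorphisms, I would rewrite $\RRG aM\simeq\Lotimes[R]{\RRG aR}{M}$ in $\catd(\Comp Ra)$ via Fact~\ref{fact130619b}, then apply associativity of the derived tensor product to move $X$ next to $\RRG aR$, and finally apply the key observation:
\begin{equation*}
\Lotimes[\Comp Ra]{X}{\RRG aM}\simeq\Lotimes[R]{(\Lotimes[\Comp Ra]{X}{\RRG aR})}{M}\simeq\Lotimes[R]{X}{M}.
\end{equation*}
The companion isomorphism $\Lotimes[\Comp Ra]{X}{(\Lotimes[R]{\Comp Ra}{M})}\simeq\Lotimes[R]{X}{M}$ is immediate from associativity and tensor cancellation (no hypothesis on $X$ needed). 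For the Hom isomorphisms, proceed symmetrically using $\LLL aM\simeq\Rhom{\RRG aR}{M}$ from Fact~\ref{fact130619b} together with Hom-tensor adjointness:
\begin{equation*}
\Rhom[\Comp Ra]{X}{\LLL aM}\simeq\Rhom{\Lotimes[\Comp Ra]{X}{\RRG aR}}{M}\simeq\Rhom{X}{M},
\end{equation*}
and the remaining isomorphism $\Rhom[\Comp Ra]{X}{\Rhom{\Comp Ra}{M}}\simeq\Rhom{X}{M}$ is a single Hom-tensor adjunction combined with tensor cancellation.

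The main obstacle will be verifying that the identification $\RRG aR\simeq\RGa a{\Comp Ra}$ truly lives in $\catd(\Comp Ra)$ (not merely in $\catd(R)$ after applying the forgetful functor $Q$), so that Lemma~\ref{prop151104a} can legitimately be invoked over $\Comp Ra$ in the key observation; once this largely formal compatibility is pinned down, the remaining steps are standard manipulations in the derived category under the flat base change $R\to\Comp Ra$.
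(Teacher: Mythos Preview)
Your proposal is correct, and the route differs from the paper's in one respect worth noting. The paper applies Lemma~\ref{prop151104a} \emph{over $R$} directly to the pair $(X,M)$: after writing $\RRG aM\simeq\Lotimes{\Comp Ra}{\RG aM}$ and cancelling, it invokes $\Lotimes X{\RG aM}\xra{\Lotimes X{\fromRG aM}}\Lotimes XM$ as an isomorphism in $\catd(R)$, then observes that this map is $\Comp Ra$-linear via the left-hand factor. This is shorter but tacitly uses that $\supp_{\Comp Ra}(X)\subseteq\VE(\fa\Comp Ra)$ forces $\supp_R(Q(X))\subseteq\VE(\fa)$ (immediate from the torsion characterization in Fact~\ref{cor130528a}), and it handles the $\Comp Ra$-linearity by inspection. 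Your approach instead isolates a single key observation $\Lotimes[\Comp Ra]{X}{\RRG aR}\simeq X$, proved by applying Lemma~\ref{prop151104a} \emph{over $\Comp Ra$} where the support hypothesis is stated verbatim, and then deduces all four isomorphisms by associativity and adjointness. This costs you the identification $\RRG aR\simeq\RGa a{\Comp Ra}$ in $\catd(\Comp Ra)$ (which, as you note, follows cleanly from Lemmas~\ref{lem150805b} and~\ref{lem150805c}), but in return you never have to argue post hoc about $\Comp Ra$-linearity or transfer the support condition across the forgetful functor. Both arguments are sound; yours is a touch longer but arguably cleaner on exactly the point you flagged as the main obstacle.
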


\begin{proof}
We verify the first two isomorphisms; the others are verified similarly.
The first isomorphism in $\catd(\Comp Ra)$ in
the following sequence is from Fact~\ref{fact130619b}.
\begin{align*}
\Lotimes[\Comp Ra] X{\RRG aM}
&\simeq\Lotimes[\Comp Ra] X{(\Lotimes{\Comp Ra}{\RG aM})}
\\
&\simeq\Lotimes X{\RG aM}
\\
&\simeq\Lotimes XM \\
&\simeq\Lotimes[\Comp Ra] X{(\Lotimes{\Comp Ra}M)}
\end{align*}
The second and fourth ones are tensor-cancellation.
The third one is the natural one $\Lotimes X{\RG aM}\xra{\Lotimes X{\fromRG aM}}\Lotimes XM$;
this is an isomorphism in $\catd(R)$ by Lemma~\ref{prop151104a}, and it respects the $\Comp Ra$-structure coming from the left.
\end{proof}

\begin{cor}\label{cor151105a}
Let $\comp K$ be the Koszul complex over $\Comp Ra$ on the generating sequence $\x$ for $\fa$.
Given an $R$-complex $M\in\catd(R)$, there are isomorphisms in $\catd(\Comp Ra)$
\begin{gather*}
\Lotimes[\Comp Ra] {\comp K}{\RRG aM}\simeq
\Lotimes[\Comp Ra] {\comp K}{(\Lotimes{\Comp Ra}M)}\simeq\Lotimes[\Comp Ra] {\comp K}{\LLL aM}
\\
\Rhom[\Comp Ra]{{\comp K}}{\LLL aM}\!\simeq\Rhom[\Comp Ra]{{\comp K}}{\Rhom{\Comp Ra}{M}}\simeq\Rhom[\Comp Ra]{{\comp K}}{\RRG aM}.
\end{gather*}
\end{cor}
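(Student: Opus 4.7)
The strategy is to combine Proposition~\ref{prop151105a} applied to $X=\comp K$ with Lemma~\ref{prop151104a} applied over $\Comp Ra$, using the MGM-style identifications from Lemmas~\ref{lem150805a}--\ref{lem150805d} to reconcile the remaining terms.

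First, I would verify that $\supp_{\Comp Ra}(\comp K)\subseteq\VE(\fa\Comp Ra)$. The Koszul complex $\comp K$ on the generating sequence $\x$ for $\fa$ is a bounded complex of free $\Comp Ra$-modules whose homology is annihilated by $\fa\Comp Ra$, hence $\fa\Comp Ra$-torsion; Fact~\ref{cor130528a} applied over $\Comp Ra$ yields the support containment. This is essentially the only new ingredient, and it is a standard Koszul fact, so no real obstacle here.

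Next, with this support condition in hand, Proposition~\ref{prop151105a} applied with $X=\comp K$ immediately provides two of the three required links in each chain, namely
\[
\Lotimes[\Comp Ra]{\comp K}{\RRG aM}\simeq\Lotimes[\Comp Ra]{\comp K}{(\Lotimes{\Comp Ra}M)}
\qquad\text{and}\qquad
\Rhom[\Comp Ra]{\comp K}{\LLL aM}\simeq\Rhom[\Comp Ra]{\comp K}{\Rhom{\Comp Ra}{M}}.
\]

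To obtain the remaining isomorphisms, I would invoke Lemma~\ref{prop151104a} over the ring $\Comp Ra$ (which is justified by the support condition established above). Taking $Y=\RRG aM\in\catd(\Comp Ra)$ yields $\Lotimes[\Comp Ra]{\comp K}{\RRG aM}\simeq\Lotimes[\Comp Ra]{\comp K}{\LLa a{\RRG aM}}$, and then Lemma~\ref{lem150805d} followed by Lemma~\ref{lem150805a} identifies $\LLa a{\RRG aM}\simeq\LLL a{\RG aM}\simeq\LLL aM$. Dually, taking $Y=\LLL aM\in\catd(\Comp Ra)$ yields $\Rhom[\Comp Ra]{\comp K}{\LLL aM}\simeq\Rhom[\Comp Ra]{\comp K}{\RGa a{\LLL aM}}$, and Lemma~\ref{lem150805c} followed by Lemma~\ref{lem150805b} identifies $\RGa a{\LLL aM}\simeq\RRG a{\LL aM}\simeq\RRG aM$. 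Splicing these with the two isomorphisms produced by Proposition~\ref{prop151105a} completes both chains.

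The only step that requires any genuine thought is the initial support computation, and even that is routine; the remainder of the proof is a mechanical concatenation of already-established facts. The main bookkeeping subtlety is to keep track of which category each isomorphism lives in and to apply Lemma~\ref{prop151104a} in the right ring (once over $R$ for the direct applications, but here over $\Comp Ra$), since $\comp K$ must be viewed as an $\Comp Ra$-complex for the support condition $\supp_{\Comp Ra}(\comp K)\subseteq\VE(\fa\Comp Ra)$ to be meaningful in that invocation.
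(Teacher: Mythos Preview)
Your proposal is correct and matches the paper's intent: the corollary is stated without proof immediately after Proposition~\ref{prop151105a}, so the paper implicitly regards it as a direct consequence of that proposition applied with $X=\comp K$. Your write-up correctly verifies the needed support hypothesis $\supp_{\Comp Ra}(\comp K)\subseteq\VE(\fa\Comp Ra)$, extracts the first isomorphism in each chain from Proposition~\ref{prop151105a}, and then obtains the remaining link by invoking Lemma~\ref{prop151104a} over $\Comp Ra$ together with the identifications $\LLa a{\RRG aM}\simeq\LLL aM$ and $\RGa a{\LLL aM}\simeq\RRG aM$ from Lemmas~\ref{lem150805a}--\ref{lem150805d} and~\ref{lem150805b}--\ref{lem150805c}; this is exactly the expected unpacking.
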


The next result is Theorem~\ref{thm151129c} from the introduction.
Note that it is straightforward when $X\in\catdfb(R)$, by Fact~\ref{fact130619b}.
See~\cite[Theorems~5.6 and~5.7]{sather:asc} for applications.

\begin{thm}\label{thm151011a}
Let $R\to S$ be a  homomorphism of commutative noetherian rings, and let $X\in\catdb(R)$ be  
$\fa$-adically finite over $R$.
If $\Lotimes SX\in\catdb(S)$, e.g., if $\fd_R(S)<\infty$,
then there is an isomorphism
in $\catd(\compsa)$:
$$\Lotimes[\Comp Ra]{\compsa}{\LLL aX}\simeq\mathbf{L}\widehat\Lambda^{\fa S}(\Lotimes SX).$$
\end{thm}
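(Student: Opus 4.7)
The plan is to exploit the $\fa$-adic finiteness of $X$ by trading it for $N := \LLL aX$, which lies in $\catdfb(\Comp Ra)$, and then to compare the two sides of the asserted isomorphism by running both through a common semi-projective resolution of $N$ over $\Comp Ra$ by finitely generated frees.

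First, I would invoke Theorem~\ref{thm150626a}\eqref{thm150626a1} together with Corollary~\ref{cor151003a} to obtain a natural isomorphism $X \simeq \RG a{Q(N)}$ in $\catd(R)$. Applying $\Lotimes S -$ and using the flat base change $\RG aM \simeq \Lotimes{\RG aR}{M}$ from Fact~\ref{fact130619b} yields $\Lotimes SX \simeq \mathbf{R}\Gamma_{\fa S}(\Lotimes S{Q(N)})$ in $\catd(S)$. Applying $\mathbf{L}\widehat\Lambda^{\fa S}$ and invoking the $(S,\fa S)$-analog of Lemma~\ref{lem150805a}, namely $\mathbf{L}\widehat\Lambda^{\fa S}\circ\mathbf{R}\Gamma_{\fa S}\simeq\mathbf{L}\widehat\Lambda^{\fa S}$, simplifies the right-hand side of the theorem to
$$\mathbf{L}\widehat\Lambda^{\fa S}(\Lotimes SX) \simeq \mathbf{L}\widehat\Lambda^{\fa S}(\Lotimes S{Q(N)})$$
in $\catd(\compsa)$.

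The task then reduces to verifying that $\mathbf{L}\widehat\Lambda^{\fa S}(\Lotimes S{Q(N)}) \simeq \Lotimes[\Comp Ra]{\compsa}{N}$. For this, choose a semi-projective resolution $P \xra{\simeq} N$ over $\Comp Ra$ with each $P_i$ a finitely generated free module $(\Comp Ra)^{n_i}$; such a resolution exists because $\Comp Ra$ is noetherian and $N \in \catdfb(\Comp Ra)$. Since $\Comp Ra$ is $R$-flat, $Q(P)$ is semi-flat over $R$, so $\Otimes SQ(P)$ serves as a flat resolution of $\Lotimes S{Q(N)}$ over $S$; Proposition~\ref{prop160202a} applied over $(S, \fa S)$ then shows that $\Lambda^{\fa S}(\Otimes SQ(P))$ models $\mathbf{L}\widehat\Lambda^{\fa S}(\Lotimes S{Q(N)})$.

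The main obstacle, as I see it, is the final degreewise identification. The key ingredient is the computation
$$\widehat{S \otimes_R \Comp Ra}^{\fa S} \cong \varprojlim_n \bigl((S/\fa^nS) \otimes_{R/\fa^n} (R/\fa^n)\bigr) \cong \compsa,$$
obtained via $\Comp Ra/\fa^n\Comp Ra \cong R/\fa^n$, which yields $\Lambda^{\fa S}\bigl((S \otimes_R \Comp Ra)^{n_i}\bigr) \cong (\compsa)^{n_i}$. This matches $(\Otimes[\Comp Ra]{\compsa}P)_i$ in every degree, and the differentials on both sides are induced from $\partial^P$. Hence $\Lambda^{\fa S}(\Otimes SQ(P)) \cong \Otimes[\Comp Ra]{\compsa}P$ as complexes of $\compsa$-modules, and the latter computes $\Lotimes[\Comp Ra]{\compsa}{N}$ since $P$ is semi-flat over $\Comp Ra$. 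The delicate point is ensuring the chain-level identifications respect all ring actions simultaneously, so as to produce an honest isomorphism in $\catd(\compsa)$.
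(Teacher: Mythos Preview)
Your proof is correct and takes a genuinely different route from the paper's. The paper argues categorically: it first establishes that both sides lie in $\catdfb(\compsa)$ (the nontrivial part being ``Claim 2'', which shows $\Lotimes[\Comp Ra]{\compsa}{\LLL aX}\in\catdb(\compsa)$ via a Koszul-complex amplitude test and is exactly where the hypothesis $\Lotimes SX\in\catdb(S)$ enters), then invokes the equivalence of Theorem~\ref{thm150626a}\eqref{thm150626a2} to reduce to showing that $\mathbf{R}\Gamma_{\fa\compsa}$ of the two sides agree, and finally computes both to be $\Lotimes{\compsa}{X}$ by chains of derived-category identities.

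Your approach is more concrete: you trade $X$ for $N=\LLL aX\in\catdfb(\Comp Ra)$, absorb the $\RGno a$ into $\mathbf{L}\widehat\Lambda^{\fa S}$, and then identify both sides at the chain level through a bounded-below degreewise-finite free resolution $P$ of $N$, using the elementary computation $\Lambda^{\fa S}(S\otimes_R\Comp Ra)\cong\compsa$. The ``delicate point'' you flag is harmless: the differentials on both $\Lambda^{\fa S}(\Otimes S{Q(P)})$ and $\Otimes[\Comp Ra]{\compsa}{P}$ are the matrices $\partial^P$ pushed along $\Comp Ra\to\compsa$, and the identification $\Lambda^{\fa S}(S\otimes_R\Comp Ra)\cong\compsa$ is a ring isomorphism compatible with the maps from $S$ and $\Comp Ra$, so the chain-level match is genuinely $\compsa$-linear. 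Note also that since $P$ is bounded below with flat terms, $\Otimes S{Q(P)}$ is already semi-flat over $S$, so your appeal to Proposition~\ref{prop160202a} can be replaced by the definition of $\LLno{}$.

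A notable dividend: your argument never invokes the hypothesis $\Lotimes SX\in\catdb(S)$, so it actually proves the isomorphism for every $\fa$-adically finite $X\in\catdb(R)$ without that extra boundedness assumption.
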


\begin{proof}
Our finiteness assumption on $X$ implies that $X\in\catdb(R)$ and $\LLL aX\in\catdfb(\Comp Ra)$.
Thus, we have
$\Lotimes[\Comp Ra]{\compsa}{\LLL aX}\in\catdf_+(\compsa)$.
Also, from~\cite[Theorem~5.10]{sather:afcc}, we have $\mathbf{L}\widehat\Lambda^{\fa S}(\Lotimes SX)\in\catdfb(\compsa)$.
For clarity, we set $K^R:=K=K^R(\x)$,
where $\x$ is a finite generating sequence for $\fa$, and set $K^{\Comp Ra}:=K^{\Comp Ra}(\x)$, and similarly for $K^S$ and $K^{\compsa}$. 

Claim 1: there is an isomorphism 
$\Lotimes[\Comp Ra]{K^{\Comp Ra}}{\LLL aX}\simeq\Lotimes{\Comp Ra}{(\Lotimes{K^R}X)}$ in $\catd(\Comp Ra)$.
This follows from the next sequence of isomorphisms:
\begin{align*}
\Lotimes[\Comp Ra]{K^{\Comp Ra}}{\LLL aX}
&\simeq \Lotimes[\Comp Ra]{K^{\Comp Ra}}{(\Lotimes{\Comp Ra}X)}\\
&\simeq \Lotimes[\Comp Ra]{(\Lotimes {\Comp Ra}{K^R})}{(\Lotimes{\Comp Ra}X)}\\
&\simeq\Lotimes{\Comp Ra}{(\Lotimes{K^R}X)}.
\end{align*}
The first  isomorphism is from Corollary~\ref{cor151105a},
and the others are routine.

Claim 2: we have $\Lotimes[\Comp Ra]{\compsa}{\LLL aX}\in\catdfb(\compsa)$.
To this end, recall that the first paragraph of this proof shows that we have 
$\Lotimes[\Comp Ra]{\compsa}{\LLL aX}\in\catdf_+(\compsa)$.
Thus, we need only show that $\Lotimes[\Comp Ra]{\compsa}{\LLL aX}\in\catdb(\compsa)$.
For this, note first that every maximal ideal of $\compsa$ contains $\fa\compsa$.
In other words, we have $\supp_{\compsa}(K^{\compsa})=\VE(\fa\compsa)\supseteq\mspec(\compsa)$.
Thus, according to~\cite[Theorem~4.2(b)]{frankild:rrhffd},
to establish the claim, it suffices to show that we have
$\Lotimes[\compsa]{K^{\compsa}}{(\Lotimes[\Comp Ra]{\compsa}{\LLL aX})}\in\catdfb(\compsa)$.
To this end, we consider the following sequence of isomorphisms in $\catd(\compsa)$.
\begin{align*}
\Lotimes[\compsa]{K^{\compsa}}{(\Lotimes[\Comp Ra]{\compsa}{\LLL aX})}
&\simeq\Lotimes[\compsa]{(\Lotimes[\Comp Ra]{\compsa}K^{\Comp Ra})}{(\Lotimes[\Comp Ra]{\compsa}{\LLL aX})}
\\
&\simeq
\Lotimes[\Comp Ra]{\compsa}{(\Lotimes[\Comp Ra]{K^{\Comp Ra}}{\LLL aX})}
\\
&\simeq
\Lotimes[\Comp Ra]{\compsa}{(\Lotimes{\Comp Ra}{(\Lotimes{K^R}X)})}
\\
&\simeq
\Lotimes[S]{\compsa}{(\Lotimes{S}{(\Lotimes{K^R}X)})}
\\
&\simeq
\Lotimes[S]{\compsa}{(\Lotimes[S]{(\Lotimes{S}{K^R})}{(\Lotimes{S}X)})}
\\
&\simeq
\Lotimes[S]{\compsa}{(\Lotimes[S]{K^S}{(\Lotimes{S}X)})}
\end{align*} 
The third isomorphism is from Claim 1, and the others are standard.
Since we have $\Lotimes{S}X\in\catdb(S)$, by assumption,
the condition $\pd_S(K^S)<\infty$ implies that $\Lotimes[S]{K^S}{(\Lotimes{S}X)}\in\catdb(S)$.
Thus the flatness of $\compsa$ over $S$ implies that we have
$$\Lotimes[\compsa]{K^{\compsa}}{(\Lotimes[\Comp Ra]{\compsa}{\LLL aX})}
\simeq\Lotimes[S]{\compsa}{(\Lotimes[S]{K^S}{(\Lotimes{S}X)})}\in\catdb(\compsa).$$
This establishes Claim 2.

Since the complexes 
$\mathbf{L}\widehat\Lambda^{\fa S}(\Lotimes SX)$ and
$\Lotimes[\Comp Ra]{\compsa}{\LLL aX}$
are in $\catdfb(\compsa)$, to show that they are isomorphic, Theorem~\ref{thm150626a}\eqref{thm150626a2}
says that it suffices to show that
$\mathbf{R}\Gamma_{\fa \compsa}(\mathbf{L}\widehat\Lambda^{\fa S}(\Lotimes SX))
\simeq\mathbf{R}\Gamma_{\fa \compsa}(\Lotimes[\Comp Ra]{\compsa}{\LLL aX})$
in $\catd(\compsa)$.
To verify this isomorphism, we compute as follows.
\begin{align*}
\mathbf{R}\Gamma_{\fa \compsa}(\mathbf{L}\widehat\Lambda^{\fa S}(\Lotimes SX))
&\simeq \mathbf{R}\widehat\Gamma_{\fa S}(\Lotimes SX)
\\
&\simeq \Lotimes[S]{\compsa}{\mathbf{R}\Gamma_{\fa S}(\Lotimes SX)}
\\
&\simeq \Lotimes[S]{\compsa}{(\Lotimes SX)}
\\
&\simeq \Lotimes{\compsa}{X}
\end{align*}
The first isomorphism is by Lemmas~\ref{lem150805b} and~\ref{lem150805c}
The second isomorphism is from Fact~\ref{fact130619b}.
For the third isomorphism, note that~\cite[Lemma~5.7]{sather:afcc} shows that $\supp_S(\Lotimes SX)\subseteq\VE(\fa S)$,
so Fact~\ref{cor130528a} implies that $\mathbf{R}\Gamma_{\fa S}(\Lotimes SX)\simeq\Lotimes SX$ in $\catd(S)$.
The last isomorphism is tensor-cancellation.
The next isomorphisms are justified similarly.
\begin{align*}
\mathbf{R}\Gamma_{\fa \compsa}(\Lotimes[\Comp Ra]{\compsa}{\LLL aX})
&\simeq\Lotimes[\compsa]{\mathbf{R}\Gamma_{\fa \compsa}(\compsa)}{(\Lotimes[\Comp Ra]{\compsa}{\LLL aX})}
\\
&\simeq\Lotimes[\compsa]{(\Lotimes[\Comp Ra]{\compsa}{\mathbf{R}\Gamma_{\fa \Comp Ra}(\Comp Ra)})}{(\Lotimes[\Comp Ra]{\compsa}{\LLL aX})}
\\
&\simeq\Lotimes[\Comp Ra]{\compsa}{(\Lotimes[\Comp Ra]{\mathbf{R}\Gamma_{\fa \Comp Ra}(\Comp Ra)}{\LLL aX})}
\\
&\simeq\Lotimes[\Comp Ra]{\compsa}{\mathbf{R}\Gamma_{\fa \Comp Ra}(\LLL aX)}
\\
&\simeq\Lotimes[\Comp Ra]{\compsa}{\RRG aX}
\\
&\simeq\Lotimes[\Comp Ra]{\compsa}{(\Lotimes{\Comp Ra}{\RG aX})}
\\
&\simeq\Lotimes[\Comp Ra]{\compsa}{(\Lotimes{\Comp Ra}{X})}
\\
&\simeq\Lotimes{\compsa}{X}
\end{align*}
These two sequences give the desired isomorphism, completing the proof.
\end{proof}

\section*{Acknowledgments}
We are grateful to Srikanth Iyengar, 
Liran Shaul,
and Amnon Yekutieli
for helpful comments about this work.

%\bibliography{../+new}
\providecommand{\bysame}{\leavevmode\hbox to3em{\hrulefill}\thinspace}
\providecommand{\MR}{\relax\ifhmode\unskip\space\fi MR }
% \MRhref is called by the amsart/book/proc definition of \MR.
\providecommand{\MRhref}[2]{%
  \href{http://www.ams.org/mathscinet-getitem?mr=#1}{#2}
}
\providecommand{\href}[2]{#2}

\end{document}